\immediate\write18{makeindex -s nomencl.ist -o "\jobname.nls" "\jobname.nlo"}
%!TEX program = xelatex

\documentclass[11pt,reqno,tbtags]{amsart}

%Included packages, custom commands are in the file commands.tex
%Packages
\usepackage{amsthm,amssymb,amsfonts,amsmath}
\usepackage{amscd} %Commutative diagrams. 
\usepackage{mathrsfs}
\usepackage[numbers, sort&compress]{natbib}
\usepackage[lofdepth,lotdepth]{subfig}
\usepackage{graphicx}
\usepackage{vmargin}
\usepackage{setspace}
\usepackage{paralist}
\usepackage[pagebackref=true]{hyperref}
\usepackage{color}
\usepackage[normalem]{ulem}
\usepackage{stmaryrd} %for \bbr
\usepackage{bm} % for bold greek letters
\usepackage[refpage,noprefix,intoc]{nomencl} %for list of notation.

%
% Natural Numbers, Integers, etc.
\providecommand{\R}{}

\providecommand{\N}{}

\renewcommand{\R}{\mathbb{R}}

\renewcommand{\N}{{\mathbb N}}

% Basic Probability
\newcommand{\E}[1]{{\mathbf E}\left[#1\right]}

\newcommand{\p}[1]{{\mathbf P}\left\{#1\right\}}

\newcommand{\set}[1]{\left\{ #1 \right\}}

\newcommand{\probC}[2]{\mathbf{P}\set{#1 \; \left|  \; #2 \right. }}

 %Equationarrays
 \newcommand{\bag}{\begin{align}}
\newcommand{\bags}{\begin{align*}}
\newcommand{\eag}{\end{align*}}
\newcommand{\eags}{\end{align*}}

%Theorems and numbered things
\newtheorem{thm}{Theorem}
\newtheorem{lem}[thm]{Lemma}
\newtheorem{prop}[thm]{Proposition}

\newtheorem{definition}[thm]{Definition}

\newtheorem{claim}[thm]{Claim}

\theoremstyle{definition}
\newtheorem{assumption}{Assumption}
\theoremstyle{remark}
\newtheorem{remark}{Remark}

%Sets

\newcommand\cL{{\mathcal L}}
\newcommand\cM{\mathcal M}

\newcommand\cT{{\mathcal T}}

\newcommand\cZ{{\mathcal Z}}

%Roman characters

\newcommand{\rD}{\mathrm{D}}

\newcommand{\rQ}{\mathrm{Q}} 
\newcommand{\rR}{\mathrm{R}} 
 
\newcommand{\rT}{\mathrm{T}}

%Boldface characters

\newcommand{\bM}{\mathbf{M}}

\newcommand{\bT}{\mathbf{T}}

%References to theorems and so on.

%\newcommand{\refP}[1]{Problem~\ref{#1}}

% Other

\providecommand{\eps}{}
\renewcommand{\eps}{\epsilon}
\providecommand{\ora}[1]{}
\renewcommand{\ora}[1]{\overrightarrow{#1}}

%Gromov Hausdorff stuff

\newcommand{\bbr}[1]{\ensuremath{\llbracket #1 \rrbracket}} %requires package stmaryd

%Probabilistic convergence etc. 
\newcommand{\eqdist}{\ensuremath{\stackrel{\mathrm{d}}{=}}}
\newcommand{\convdist}{\ensuremath{\stackrel{\mathrm{d}}{\rightarrow}}}

\newcommand{\convpr}{\ensuremath{\stackrel{\mathrm{(p)}}{\longrightarrow}}}
\newcommand{\aseq}{\ensuremath{\stackrel{\mathrm{a.s.}}{=}}}

\hypersetup{
    bookmarks=true,         % show bookmarks bar?
    unicode=false,          % non-Latin characters in Acrobat’s bookmarks
    pdftoolbar=true,        % show Acrobat’s toolbar?
    pdfmenubar=true,        % show Acrobat’s menu?
    pdffitwindow=true,      % page fit to window when opened
    pdftitle={My title},    % title
    pdfauthor={Author},     % author
    pdfsubject={Subject},   % subject of the document
    pdfnewwindow=true,      % links in new window
    pdfkeywords={keywords}, % list of keywords
    colorlinks=true,       % false: boxed links; true: colored links
    linkcolor=blue,          % color of internal links
    citecolor=blue,        % color of links to bibliography
    filecolor=blue,      % color of file links
    urlcolor=blue           % color of external links
}

% Sums %

%%%%%%%% For marginal and other comments, and color text %%%%%%%%%%%

\reversemarginpar
\marginparwidth 1.1in

\definecolor{clou}{rgb}{0.8,0.25,0.5125}

% For title
\newcommand\urladdrx[1]{{\urladdr{\def~{{\tiny$\sim$}}#1}}}

% at the end of this \oclock tells the time. 
\begingroup
  \count255=\time
  \divide\count255 by 60
  \count1=\count255
  \multiply\count255 by -60
  \advance\count255 by \time
  \ifnum \count255 < 10 \xdef\oclock{\the\count1:0\the\count255}
  \else\xdef\oclock{\the\count1:\the\count255}\fi
\endgroup

%%%%%%%%%%%%%%%%%%%%%%%%%%%%%%%%%%%%%%%
%%%% To build a list of notation %%%%%%%%%%%%%%%%%%%%%%%
														%%%
			%%%
	%%%
\makenomenclature											%%%
\setlength{\nomitemsep}{-\parsep}								%%%
%%%%%%%%%%%%%%%%%%%%%%%%%%%%%%%%%%%%%%%%
%%%%%%%%%%%%%%%%%%%%%%%%%%%%%%%%%%%%%%%%

%For table of contents when using AMS styles. Change 5 to 4 if not using hyperref. 
\DeclareRobustCommand{\SkipTocEntry}[5]{}
%

% \newenvironment{steplist}
% { \begin{list}%
% 	{$\bullet$}%
% 	{\setlength{\labelwidth}{50pt}%
% 	 \setlength{\leftmargin}{35pt}%
% 	 \setlength{\itemsep}{\parsep}}}%
% { \end{list} }

%Change the spacing around the page
% \pdfpageheight 11in
% \pdfpagewidth 8.5in
% \textwidth = 6.5 in
% \textheight = 9 in
% \oddsidemargin = 1.0 in
% \evensidemargin = 1.0 in
% \topmargin = 1.0 in
% \headheight = 0.0 in
% \headsep = 0.0 in
%\parskip = 0.1in
% \parindent = 0.0in

\newcommand{\sym}{\ensuremath{\mathrm{sym}}}
\newcommand{\angles}[1]{\langle #1 \rangle}
\newcommand{\rv}{\ensuremath{\mathrm{v}}}
\newcommand{\rd}{\ensuremath{\mathrm{d}}}
\newcommand{\rr}{\ensuremath{\mathrm{r}}}
\newcommand{\rs}{\ensuremath{\mathrm{s}}}

\newcommand{\rz}{\ensuremath{\mathrm{z}}}
\newcommand{\ru}{\ensuremath{\mathrm{u}}}

\newcommand{\rt}{\ensuremath{\mathrm{t}}}
\newcommand{\dist}{\ensuremath{\mathrm{dist}}}
\newcommand{\Dist}{\ensuremath{\mathrm{Dist}}}

\newcommand{\branch}{\ensuremath{\mathrm{branch
}}}
\newcommand{\cht}{\ensuremath{\mathrm{ctype}}}
\providecommand{deg}{}
\renewcommand{\deg}{\ensuremath{\mathrm{deg}}}

\newcommand{\be}{\ensuremath{\mathbf{e}}}

\newcommand{\lip}[1]{\ensuremath{|\!|#1|\!|_{\text{Lip}}}}
\newcommand{\ninf}[1]{\ensuremath{|\!|#1|\!|_{\infty}}}

% Commands Marie added
%\definecolor{cmar}{RGB}{39,118,125}
%\newcommand{\modmar}[1]{\marginal{\textcolor{cmar}{xxx}}\textcolor{cmar}{#1}}
%\newcommand{\commar}[1]{\marginal{\textcolor{red}{xxx}}\textcolor{red}{#1}}
%\newcommand{\mmar}[1]{\marginal{{\color{cmar}#1}}}
%\newcommand{\mout}[1]{{\color{cmar}\sout{#1}}}
\newcommand{\limC}{\ensuremath{C}}
\newcommand{\limZ}{\ensuremath{Z}}
\newcommand{\sbt}{{\ensuremath{\scriptscriptstyle\bullet}}}

%
%\definecolor{clou}{RGB}{128,19,95}
%\newcommand{\modl}[1]{\marginal{\textcolor{clou}{xxx}}\textcolor{clou}{#1}}
%\newcommand{\lout}[1]{{\color{clou}\sout{#1}}}
%\renewcommand{\lmar}[1]{\marginal{{\color{clou}#1}}}
\newcommand{\Permvecs}{P}
\newcommand{\unif}{\ensuremath{X}}
\newcommand{\dfun}{\ensuremath{\delta}}

\begin{document}
% \title{{\color{blue}{Convergence of non-bipartite maps via symmetrization of labeled trees}}
\title{Convergence of non-bipartite maps via symmetrization of labeled trees}
% \title{Convergence of odd-angulations via symmetrization of labeled trees}
\author{Louigi Addario-Berry \and Marie Albenque}
\address{Department of Mathematics and Statistics, McGill University, 805 Sherbrooke Street West, 
		Montr\'eal, Qu\'ebec, H3A 2K6, Canada}
\address{LIX UMR~7161, \'Ecole Polytechnique, 1 Rue Honor\'e d'Estienne d'Orves, 91120 Palaiseau, France.}
\email{louigi.addario@mcgill.ca}
\email{albenque@lix.polytechnique.fr}
\date{April 9, 2019} %; revised ...
\urladdrx{http://problab.ca/louigi/}
\urladdrx{http://www.lix.polytechnique.fr/~albenque/}

%\keywords{<keywords>}
%\subjclass{60C05} 

%{60C05 (68P10,68W40)} %%{Primary: <subject>; Secondary: <subject>}
\begin{abstract} 
Fix an odd integer $p\geq 5$. Let $M_n$ be a uniform $p$-angulation with $n$ vertices, endowed with the uniform probability measure on its vertices. We prove that there exists $C_p\in \mathbb{R}_+$ such that, after rescaling distances by $C_p/n^{1/4}$, $M_n$ converges in distribution for the Gromov-Hausdorff-Prokhorov topology towards the Brownian map. To prove the preceding fact, we introduce a bootstrapping principle for distributional convergence of random labelled plane trees. In particular, the latter allows to obtain an invariance principle for labeled multitype Galton-Watson trees, with only a weak assumption on the centering of label displacements. 
\end{abstract}

\maketitle

\section{Introduction}\label{sec:introduction} 

\subsection{Convergence of random planar maps}
A \emph{planar map} is an embedding of a finite connected graph into the two-dimensional sphere, viewed up to orientation-preserving homeomorphisms. For $p\geq 3$, a $p$-angulation is a planar map whose faces all have degree $p$. Scaling limits of random planar maps have been the subject of a lot of attention in recent years; perhaps the most celebrated results are the independent proofs by Miermont \cite{MiermontBrownian} and Le Gall \cite{LeGallUniqueness} of the fact that the scaling limit of random 4-angulations (or quadrangulations) is the Brownian map. In fact, in his work Le Gall also established that, for $p=3$ or $p\ge 4$ even, the scaling limit of $p$-angulations is  the Brownian map. The current paper establishes the analogous result for $p$-angulations with $p\ge 5$ odd.
\begin{thm}\label{th:oddAngulations}
Let $p\geq 5$ be an \emph{odd} integer and let $(\bM_n)$ be a sequence of independent random maps, such that for any $n\geq 1$, $\bM_n$ is a uniform $p$-angulation with $n$ vertices. Denote by $\dist_{\bM_n}$ the graph distance on $\bM_n$ and $\mu_n$ the uniform probability distribution on its set of vertices $V(\bM_n)$. Then there exists a constant $C_p$ such that, as $n$ goes to infinity, 
\[
	\Big(V(\bM_n),\frac{C_p}{n^{1/4}}\dist_{\bM_n},\mu_n\Big) \convdist (M,d^\star,\lambda),
\]
for the Gromov-Hausdorff-Prokhorov topology and where $(M,d^\star,\lambda)$ is the Brownian map.
\end{thm}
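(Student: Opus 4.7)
The strategy is the classical one pioneered by Chassaing--Schaeffer, Le Gall, and Miermont: encode $\bM_n$ by a labelled plane tree via a Bouttier--Di Francesco--Guitter (BDG)-type bijection so that labels record graph distances in $\bM_n$ to a distinguished vertex; show that the rescaled labelled tree converges in distribution to the CRT decorated by the Brownian snake; then transfer the convergence to the map using the pseudo-distance construction of the Brownian map together with a Schaeffer-type distance upper bound and a lower bound that exploits invariance under uniform re-rooting. The task is therefore to adapt Le Gall's program from even $p$ (or $p=3$) to odd $p\geq 5$.

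The first step is to fix a precise bijection between rooted $p$-angulations with $n$ vertices and an appropriate family of \emph{multitype} labelled mobiles, and to identify the resulting offspring and label-increment distributions when the $p$-angulation is uniform. For odd $p$ the mobile is bipartite with (at least) two vertex types, and under the uniform measure it becomes a multitype Galton--Watson tree conditioned on its size, with an explicit step law for the labels. A routine computation identifies this step law, checks that the offspring distribution has finite variance, and verifies that the hypotheses of Miermont's invariance principle for multitype GW trees apply to the unlabelled skeleton.

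The crux is the convergence of the labelled tree to the Brownian snake. For even $p$, Le Gall's argument uses that the label increments are symmetric, hence centred; the natural label step law arising from odd $p$-angulations does \emph{not} have mean zero in any obvious encoding, and this is precisely where the standard invariance principle breaks. The plan is to use the bootstrapping principle announced in the abstract. Concretely: conditionally on the tree, one replaces the labels by their symmetrization (obtained by a sign-flipping or re-sampling operation that matches the dispersion but kills the bias); one proves scaling-limit convergence of this \emph{symmetrized} labelled tree by a direct application of the centred multitype invariance principle; and one then bootstraps back to the original label process by showing that, after conditioning, summing the original label increments along a typical branch differs from the symmetrized version by a quantity of order $o(n^{1/4})$. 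The only hypothesis needed on the original labels is a weak centering property, which one verifies by a direct calculation using the explicit step law for odd $p$.

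Once the labelled mobile is shown to converge to the CRT with Brownian labels, the rest of the proof follows the Le Gall--Miermont template: the bijection yields a Schaeffer-type upper bound $\dist_{\bM_n}(u,v) \leq$ (label-contour functional of $u,v$), which after rescaling gives $\limsup$ bounds on distances matching those in $(M,d^\star,\lambda)$; a matching lower bound comes from picking a uniformly random re-rooting of the map (equivalently, a uniformly random vertex of the tree) and using tightness together with the characterization of the Brownian map by its geodesic structure. The constant $C_p$ is determined by the variances of the offspring and label-step distributions computed in step one. The principal obstacle, by a wide margin, is the bootstrapping principle for labelled multitype trees; the rest is a technically careful but conceptually standard unrolling of known machinery in the multitype setting.
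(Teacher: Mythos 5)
Your high-level architecture matches the paper's: the Bouttier--Di Francesco--Guitter bijection sends Boltzmann odd $p$-angulations to conditioned multitype labelled mobiles, Miermont's invariance principle handles a modified version of the labelled tree, one bootstraps back to the original labels, and Le Gall's re-rooting argument transfers snake convergence to Gromov--Hausdorff--Prokhorov convergence of the map. The gap is in the central step, which you flag as the principal obstacle but then describe by a mechanism that does not work. First, the paper's symmetrization is not a sign-flip or re-sampling of the labels: it is a uniform random permutation of the ordering of the children at every node, with each displacement following its edge. The displacement law at a node depends on the \emph{ordered} vector of child types, and averaging over all orderings (see \eqref{def:piSym}) turns the ``centered'' condition (the sum of expected displacements over all orderings of a given multiset of child types vanishes) into the ``locally centered'' condition required by Miermont's theorem. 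Second, and more seriously, your proposed bootstrap --- showing that the original label sum along a typical branch differs from the symmetrized one by $o(n^{1/4})$ --- fails if read pathwise or conditionally on the tree: conditionally on the plane tree, each edge on a branch carries a bias of order $1$ determined by the ordering of the children at its top endpoint, and a branch has length $\Theta(n^{1/2})$, so the accumulated bias can be of order $n^{1/2}\gg n^{1/4}$. The cancellation is purely distributional and comes from the symmetry of the law of the unlabelled tree; it cannot be obtained by an edgewise or branchwise estimate.

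The paper's actual mechanism is an exact identity, not an estimate: Lemma~\ref{lem:eqdist} shows that the reduced subtree spanned by $k$ uniformly sampled vertices, with the displacements on the child edges of its branchpoints set to zero, has \emph{exactly} the same law in the tree as in its symmetrization. This yields convergence of finite-dimensional distributions of $(C_{\rT_n},Z_{\rT_n})$ with an error controlled by at most $k-1$ branchpoint displacements, each bounded by the maximal edge label $\Delta_n$, for which $b_n\Delta_n\convdist 0$. Tightness of the label process is a separate, non-trivial step (Lemmas~\ref{lem:boundLabels}--\ref{lem:labelTight} and Proposition~\ref{prop:tightness}), which you omit; it exploits the fact that the multiset of labels within a given tree distance of one another is invariant under symmetrization. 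Without the exact distributional identity for reduced trees, or an equivalent replacement, your outline does not close.
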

We obtain Theorem~\ref{th:oddAngulations} as a consequence of a more general result, stated in Theorem~\ref{th:convergence}, which establishes convergence to the Brownian map for so-called \emph{regular critical Boltzmann maps}. Before giving further context for our result, and the ideas of its proof, let us first emphasize that it relies on the work of Miermont and Le Gall and does not constitute an independent proof of the uniqueness of the limiting object. 
% We recall the definition of the Brownian map in Section ... . 

The main motivation for our work is the conjecture that the Brownian map is a universal limiting object for many families of planar maps. As already mentioned, the Brownian map is known to be the scaling limit for uniform $p$-angulations for $p\in \{3\}\cup 2\mathbb{N}$, but also for quadrangulations without vertices of degree one~\cite{beltran13quad}, for simple triangulations and quadrangulations~\cite{AddarioAlbenqueSimple}, for general maps~\cite{BettinelliJacobMiermont}, for bipartite maps~\cite{AbrahamBipartite} and for bipartite maps with prescribed degree sequence~\cite{MarzoukPrescribed}. In this sense, our result is an additional step towards the universality of the Brownian map. Moreover, recall that a \emph{bipartite} map is a map whose vertices can be partitioned into two sets, say $B$ and $W$ such that all edges in the map have one extremity in $B$ and one extremity in $W$. It is easy to see that a planar map is bipartite if and only if all its faces have even degree. 
With the notable exception of \cite{BettinelliJacobMiermont} (which does not control the degree of faces of the maps considered), all the results listed above deal with either bipartite maps or with triangulations.
Some results about distance statistics of odd-angulations (and more generally of non-bipartite regular critical Boltzmann maps) have been obtained previously in~\cite{MiermontInvariance,MiermontWeill}, but the methods developed in these works did not yield convergence to the Brownian map.

As with most results in this field, our work relies on a bijection between planar maps and labeled multitype trees: the Bouttier-di Francesco-Guitter bijection~\cite{BDG2004} plays this role in our case. Thanks to the general approach developed in \cite{LeGallUniqueness}, the only new result needed to prove Theorem~\ref{th:oddAngulations} is that the encoding functions of multitype labeled trees associated to $p$-angulations by that bijection converge to the Brownian snake. Numerous results about convergence of labeled trees already exist in the litterature (see for instance~\cite{MiermontInvarianceTrees}, \cite{MarckertLineage}). However, most of these results rely on the assumption that the variation of labels along an edge is centered (see Section~\ref{sub:centering} below), or that degrees are bounded and the trees only have one type; such assumptions do not hold in our setting. To describe how we circumvent this difficulty, we introduce some further notations and definitions. 

\subsection{Symmetrization of labeled trees}
Let $t$ be a rooted plane tree. For a vertex $v$ of $t$ we write $k_t(v)$ 
\nomenclature[ktv]{$k_t(v)$}{The number of children of $v$ in $t$.}
for the number of children of $v$ in $t$ ($k$ stands for ``kids''). 
In the following we identify the vertex set $V(t)$ with the set of words given by the Ulam-Harris encoding. In this encoding, nodes are labeled by elements of $\bigcup_{n \ge 0} \N^n$, where $\N^0=\{\emptyset\}$ by convention. The root receives Ulam-Harris label $\emptyset$; the children of node $v=v_1v_2\ldots v_h \in \N^h$ receive Ulam-Harris labels $(vi,1 \le i \le k_t(v))$ in the order given by the plane embedding.

A \emph{rooted labeled plane tree} is a pair $\rt=(t,\rd)$ where $t$ is a rooted plane tree and $\rd=(d(e),e \in E(t)) \in \R^{E(t)}$ 
\nomenclature[de]{$d(e)$}{The displacement along edge $e$.}
give the \emph{displacements} of labels along the edges $E(t)$ of $t$.

We fix, for the remainder of the paper, a generic countable set $S$ which will serve as a type space. A tree $t$ is a multitype tree if each node $v \in V(t)$ has a type $s(v) \in S$.
\nomenclature[s]{$s(v)$}{The type of vertex $v$; $s(v) \in S$.}
%  A tree $t$ is a multitype tree if each node $v \in V(t)$ has a type $s(v) \in \mathbb{Q}$ (any other fixed countable type space would work equally well). 
% \nomenclature[s]{$s(v)$}{The type of vertex $v$; $s(v) \in \Q$.}
We likewise define multitype labeled plane trees. We may view an un-typed tree as a typed tree by giving all nodes the same type, so hereafter all trees in the paper are considered to be multitype, unless we mention otherwise.

\smallskip
Given a plane tree $t$, let $[t]$ 
\nomenclature[brackett]{$[t]$}{Set of plane trees isomorphic to $t$ as multitype rooted trees.}
be the set of plane trees which are isomorphic to $t$ as multitype rooted trees (but not necessarily as multitype \emph{plane} rooted trees). 
Write $\Permvecs_t$ 
\nomenclature[Pt]{$\Permvecs_t$}{Set of vectors $(\sigma_v,v\in V(t))$, each $\sigma_v \in \mathfrak{S}_{k_t(v)}$}
for the set of vectors $\sigma=(\sigma_v, v \in V(t))$, where each $\sigma_v$ is a permutation of $\{1,\ldots,k_t(v)\}$. Such a vector $\sigma$ uniquely specifies a tree $t'=\sigma(t) \in [t]$
\nomenclature[sigmat]{$\sigma(t)$}{Tree $t' \in [t]$ obtained by reordering children of each node $v$ using $\sigma_v$.} by reordering the children at each node according to $\sigma$ as follows: 
for each node $v=v_1v_2\ldots v_k \in V(t)$, there is a corresponding node $\sigma(v) \in V(t')$
\nomenclature[sigmav]{$\sigma(v)$}{Image of $v$ in $\sigma(t)$}
whose type is the same as that of $v$ and whose Ulam-Harris label is
\[
\sigma(v)=\sigma_\emptyset(v_1)\sigma_{v_1}(v_2)\ldots \sigma_{v_1\ldots v_{k-1}}(v_k)\, .
\]
Visually, this reorders the children of each node $v$ according to the permutation $\sigma_v$.  

% Write $\Permvecs_t$ 
% \nomenclature[Pt]{$\Permvecs_t$}{Set of vectors $(\sigma_v,v\in V(t))$, each $\sigma_v \in \mathfrak{S}_{k_t(v)}$}
% for the set of vectors $\sigma=(\sigma_v, v \in V(t))$, where each $\sigma_v$ is a permutation of $\{1,\ldots,k_t(v)\}$. Such a vector $\sigma$ uniquely specifies a tree $t'=\sigma(t) \in [t]$ as follows. 
% \nomenclature[sigmat]{$\sigma(t)$}{Tree $t' \in [t]$ obtained by reordering children of each node $v$ using $\sigma_v$.}
% Visually, reorder the children of each node $v$ according to the permutation $\sigma_v$. 
% Formally, for each node $v=v_1v_2\ldots v_k \in V(t)$, there is a corresponding node $\sigma(v) \in V(t')$ 
% \nomenclature[sigmav]{$\sigma(v)$}{Image of $v$ in $\sigma(t)$}
% whose type is the same as that of $v$ and whose Ulam-Harris label is
% \[
% \sigma(v)=\sigma_\emptyset(v_1)\sigma_{v_1}(v_2)\ldots \sigma_{v_1\ldots v_{k-1}}(v_k)\, .
% \]
If $\rt$ is a labeled plane tree, we likewise define $[\rt]$ 
\nomenclature[bracketttrom]{$[\rt]$}{Set of plane trees isomorphic to $\rt$ as multitype labeled rooted trees.}
and $\rt' = \sigma(\rt) \in [\rt]$ 
\nomenclature[sigmatrom]{$\sigma(\rt)$}{Tree $\rt' \in [\rt]$ obtained by reordering children, child edge labels of $v$ using $\sigma_v$.}
by letting the labels follow their edges. Formally, if 
$e=uv \in e(t)$ then $d'(\sigma(u)\sigma(v)) = d(uv)$.

We typically use $\mu$ to denote a measure on unlabeled plane trees. We say such a measure $\mu$ is symmetric
\nomenclature[Symmetric]{Symmetric}{Measure $\mu$ (resp.\ $\nu$) on plane trees (resp.\ labeled plane trees) is symmetric if $\mu(t)=\mu(t')$ whenever $t' \in [t]$ (resp.\ if $\nu(\rt)=\nu(\rt')$ whenever $\rt'\in [\rt]$).}
 if $\mu(t)=\mu(t')$ for all plane trees $t,t'$ with $t'\in [t]$. Similarly, $\nu$ will typically denote a measure on labelled plane trees, and we say such a measure $\nu$ is symmetric if $\nu((t,d))=\nu((t',d'))$ whenever $(t',d') \in [(t,d)]$. 

Fix a plane tree $t$ and let $\sigma = (\sigma_v,v \in V(t)) \in_u \Permvecs_t$ be a uniformly random element of $\Permvecs_t$. We call the random tree $\sigma(t)$ the {\em symmetrization} of $t$. This also makes sense for a random tree $T$; in this case, conditionally given $T$, we have $\sigma \in_u \Permvecs_T$, and the symmetrization is the tree $\sigma(T)$. If $\mu$ is the law of $T$ then we write $\mu^\sym$ for the law of its symmetrization. Note that $\mu$ is symmetric if and only if $\mu=\mu^\sym$. 
\nomenclature[musym]{$\mu^\sym$}{The symmetrization of measure $\mu$.}

The definitions of the preceding paragraphs all have analogues for labeled plane trees. The symmetrization of $(t,\rd)$, is $\sigma(t,\rd)$, where $\sigma \in_u \Permvecs_t$; if $\nu$ is the law of random labeled plane tree $(T,\rD)$ then we write $\nu^{\sym}$ 
\nomenclature[nusym]{$\nu^\sym$}{The symmetrization of measure $\nu$.}
for the law of the symmetrization of $(T,\rD)$; and, $\nu$ is symmetric if and only if $\nu=\nu^\sym$.
\medskip

This work establishes a tool for establishing distributional convergence of random labelled plane trees, if such convergence is already known for symmetrized versions of the trees. We exclusively consider random labeled plane trees $(T,\rD)$ satisfying the following three properties. 
\begin{enumerate}
\item[(i)] The law $\mu$ of the underlying unlabeled plane tree is symmetric.
% : $\mu(\{t\}) = \p{T=t}=\p{T=t'}=\mu(\{t'\})$ whenever $t' \in [t]$.
\item[(ii)] For each $v \in V(T)$, let $D_v = (D_{v,v1},\ldots,D_{v,vk(v)})$ be the vector of displacements from $v$ to its children. Then the vectors $(D_v,v \in V(T))$ are conditionally independent given $T$. 
\item[(iii)] The law of each vector $D_v$ is determined by the type of $v$ together with the vector of types of its children. 
\end{enumerate}
If $(T,\rD)$ satisfies all three properties then we say its law $\nu$ is {\em valid}. 
\nomenclature[Validlaw]{Valid law}{A law $\nu$ on labeled plane trees satisfying certain independence and symmetry properties.}
\smallskip
% Fix a plane tree $t$ and let $\sigma = (\sigma_v,v \in V(t)) \in_u \Permvecs_t$ be a uniformly random element of $\Permvecs_t$. We call the random tree $\sigma(t)$ the {\em symmetrization} of $t$. This also makes sense for a random tree $T$; in this case, conditionally given $T$, we have $\sigma \in_u \Permvecs_T$, and the symmetrization is the tree $\sigma(T)$. If $\mu$ is the law of $T$ then we write $\mu^\sym$ for the law of its symmetrization. Note that $\mu$ is symmetric if and only if $\mu=\mu^\sym$. 
% \nomenclature[musym]{$\mu^\sym$}{The symmetrization of measure $\mu$.}

% The definitions of the preceding paragraph all have analogues for labeled plane trees. The symmetrization of $(t,\rd)$, is $\sigma(t,\rd)$, where $\sigma \in_u \Permvecs_t$; if $\nu$ is the law of random labeled plane tree $(T,\rD)$ then we write $\nu^{\sym}$ 
% \nomenclature[nusym]{$\nu^\sym$}{The symmetrization of measure $\nu$.}
% for the law of the symmetrization of $(T,\rD)$; and, $\nu$ is symmetric if and only if $\nu=\nu^\sym$. 
The following theorem establishes that certain asymptotic distributional properties of random labeled plane trees sampled from a valid law are unchanged by symmetrization of the distributions involved. The proof makes reference to the contour and label processes of trees; these are defined in Section~\ref{sec:fdd}, immediately following the statement of the theorem.

\begin{thm}\label{thm:main}
For each $n \ge 1$ let $\rT_n=(T_n,\rD_n)$ be a random labeled multitype plane tree whose law $\nu_n$ is valid, and let $\rT_n^\sym$ have law $\nu_n^\sym$.  Write $(C_{\rT_n}(t),Z_{\rT_n}(t))_{0\leq t\leq 1}$ and $(C_{\rT_n^{\mathrm{sym}}}(t),Z_{\rT_n^{\mathrm{sym}}}(t))_{0\leq t\leq 1}$ for the contour and label processes of $\rT_n$ and $\rT_n^{\mathrm{sym}}$, respectively. 

Suppose there exist positive sequences $(a_n,n \ge 1)$ and $(b_n,n \ge 1)$ with $a_n \to 0$ and $b_n \to 0$, and a $C([0,1],\R^2)$-valued random process $(C,Z)=((C(t),Z(t)),t \in [0,1])$, such that 
\[
(a_nC_{\rT_n^{\mathrm{sym}}},b_nZ_{\rT_n^{\mathrm{sym}}}) \convdist (C,Z)\]
 for the uniform topology. 
 Then, the following convergence also holds for the uniform topology:
 \[
 (a_nC_{\rT_n},b_nZ_{\rT_n}) \convdist (C,Z).\]
\end{thm}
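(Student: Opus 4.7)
First note that condition~(i) gives $\mu_n=\mu_n^{\sym}$, so $T_n\eqdist T_n^{\sym}$ as random plane trees; since $C_{\rT_n}$ is a deterministic functional of the plane tree alone, $C_{\rT_n}\eqdist C_{\rT_n^{\sym}}$ and hence $a_nC_{\rT_n}\convdist C$, and this sequence is in particular tight. It remains to handle the label coordinate and establish joint convergence, for which I would use the natural coupling $\rT_n^{\sym}=\sigma_n(\rT_n)$ with $\sigma_n$ conditionally uniform in $\Permvecs_{T_n}$ given $T_n$. Since labels follow edges under symmetrization, for every $v\in V(T_n)$ the label of $\sigma_n(v)$ in $\rT_n^{\sym}$ equals the label of $v$ in $\rT_n$; equivalently, under this coupling $\rT_n$ and $\rT_n^{\sym}$ determine the same underlying multitype labeled \emph{non-plane} rooted tree, differing only in their planar embedding. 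A short computation using~(i) shows moreover that conditionally on this non-plane tree, $T_n$ and $\sigma_n(T_n)$ are independent uniform planar embeddings of it.

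Tightness of $b_nZ_{\rT_n}$ can then be transferred from $b_nZ_{\rT_n^{\sym}}$ via the coupling: any functional depending only on the labeled non-plane tree (e.g.\ the maximum absolute label over a ball of given radius) is identical in $\rT_n$ and $\rT_n^{\sym}$. For finite-dimensional convergence at times $t_1<\cdots<t_k$, the joint law of $(a_nC_{\rT_n}(t_i),b_nZ_{\rT_n}(t_i))_{i\le k}$ is encoded by the $k$-marked subtree of $T_n$ spanned by the root and the contour-selected vertices, together with the labels at its marked and branch vertices; under the coupling, the analogous data for $\rT_n^{\sym}$ lives on the \emph{same} random labeled non-plane tree, but uses a marking determined by an independent uniform planar embedding of it. The main step is to show that, in the rescaled limit, the joint law of shape and labels of this $k$-marked object does not depend on which uniform planar embedding is used to produce the marking.

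I would then proceed via Skorokhod representation and subsubsequential limits, identifying any weak limit of $(a_nC_{\rT_n},b_nZ_{\rT_n})$ with $(C,Z)$ by matching its marked-subtree statistics through the coupling, using the assumed convergence of the symmetrized model. The main obstacle is that at finite $n$ the laws of $(C_{\rT_n}(t),Z_{\rT_n}(t))$ and $(C_{\rT_n^{\sym}}(t),Z_{\rT_n^{\sym}}(t))$ may differ --- as one can see on small examples --- so identity in law is not available and one must quantify how this discrepancy vanishes after rescaling. The symmetry of $\mu_n$ together with (ii)--(iii) should reduce this discrepancy to the bias between the ordered displacement laws along a spine and their symmetrization; summing $\Theta(1/a_n)$ such displacements and rescaling by $b_n$ is what I expect to drive this bias to zero in the limit, and making this quantitative is the delicate step.
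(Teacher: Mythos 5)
Your treatment of the contour coordinate and your tightness transfer are sound and match the paper's: the paper also uses the coupling $\rT_n^{\sym}=\sigma(\rT_n)$ and the observation that root-to-vertex paths and their edge labels are invariant under symmetrization to bound label oscillations of $\rT_n$ along the contour by label oscillations of $\rT_n^{\sym}$ over metric balls. The genuine gap is in your finite-dimensional step. The paper's central ingredient (its Lemma on subsampling) is an \emph{exact}, finite-$n$ distributional identity: if $\rR$ and $\rQ$ are $k$ uniformly sampled vertices of $\rT_n$ and $\rT_n^{\sym}$ respectively, then the spanned subtrees together with their displacements --- \emph{after zeroing the displacements on child edges at branchpoints of the spanned subtree} --- are equal in distribution. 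The mechanism is that along a non-branching portion of the spanned subtree, permuting the children of a spine vertex merely relabels which child carries the marked descendants; since the marked vertices are uniform and $\mu_n$ is symmetric, the joint law of (subtree shape, spine displacement) is already fully symmetrized, so the factorization \eqref{eq:validity_rewrite} gives equality on the nose. The only error is at the at most $k-1$ branchpoints, each contributing at most one edge displacement $\Delta_n$, and $b_n\Delta_n\convdist 0$ because $\Delta_n\eqdist\Delta_n^{\sym}$ and $b_nZ_n^{\sym}$ converges to a continuous limit. No asymptotic comparison of ordered versus symmetrized displacement laws is ever needed.

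Your proposed substitute --- quantifying ``the bias between the ordered displacement laws along a spine and their symmetrization'' and arguing that ``summing $\Theta(1/a_n)$ such displacements and rescaling by $b_n$'' kills it --- does not work as a heuristic and would not be repairable as stated. A per-edge bias of order $\eps$ accumulated over a spine of length $\Theta(1/a_n)$ and rescaled by $b_n$ contributes $\Theta(\eps\, b_n/a_n)$; in the intended application $a_n\asymp n^{-1/2}$ and $b_n\asymp n^{-1/4}$, so $b_n/a_n\to\infty$ and the rescaling \emph{amplifies} rather than suppresses any genuine per-edge bias. The theorem is true not because a small bias washes out in the limit, but because the bias along the spine to a uniformly sampled vertex is exactly zero away from branchpoints. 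Identifying and proving that exact identity is the missing key idea; without it, the Skorokhod/subsequence scheme you outline has no way to match the marked-subtree statistics of the two models.
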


Let us conclude by coming back to planar maps. For $p\geq 5$ and $p\notin 2\mathbb{N}$, the push-forward of the uniform measure on p-angulations (or indeed of any regular critical Boltzmann distribution on maps) by the Bouttier-di Francesco-Guitter (described in Section~\ref{sub:BDG}, below) is a probability distribution on labeled plane trees which is valid but not symmetric. Theorem~\ref{thm:main} allows the transfer of results of Miermont~\cite{MiermontInvarianceTrees} to establish that contour and label processes of these trees have the same scaling limit as their symmetrized versions (see Theorem~\ref{thm:convSnake} for a more precise statement).

\section{Valid laws, finite-dimensional-distributions and symmetrization}\label{sec:fdd}
\subsection{Definitions and notation}\label{sub:def}
Recall that throughout the paper, we use $S$ to denote the type space of our trees. We write $S^{\mathrm{fin}}$ for the set of all vectors of finite length with entries from $S$;
% Fix a set $S \subseteq S$ of types and write $S^\mathrm{fin}$ for the set of finite vectors with entries from $S$; 
we include the empty vector $()$ of length zero in this set. 

In this section $t$ always denotes a rooted multitype plane tree and $\rt$ a labeled rooted multitype plane tree. 
For a vertex $v \in V(t)$, the {\em child type vector} of $v$ is the vector $\cht(v) = (s(vi),1 \le i \le k_t(v)) \in S^{k_t(v)}\subset S^\mathrm{fin}$. 
\nomenclature[ctv]{$\cht(v)$}{The child type vector of $v$.}

\smallskip
We use the notation $|\rt|=|t|=|V(t)|$ interchangeably. 
We define the contour exploration $\theta=\theta_t:\{0,\ldots,2|t|-2\}\rightarrow V(t)$
\nomenclature[thetat]{$\theta_t$}{The contour exploration of tree $t$; takes values in $V(t)$.}
 of $t$ inductively as follows. Let $\theta(0) = \emptyset$.  Then, for $1\leq i \leq 2|t|-2$, let $\theta(i)$ be the lexicographically first child of $\theta(i-1)$ that is not an element of $\{\theta(0),\ldots,\theta(i-1)\}$, or let $\theta(i)$ be the parent of $\theta(i-1)$ if no such node exists. 
% Note that each vertex $v \in V(t)$  appears $c_T(v)+1$ times in the contour exploration.

For $v,w \in V(t)$, we write $\dist(v,w)=\dist_t(v,w)$
\nomenclature[distvw]{$\dist_t(v,w)$}{Graph distance between $v$ and $w$ in $t$.}
 for the graph distance between $v$ and $w$ in $t$. We write 
$h(v)=\dist_t(\emptyset,v)$
\nomenclature[hv]{$h(v)$}{The distance $\dist(\emptyset,v)$ from the root to $v$.}
 for the distance from $v$ to  the root, so if $v \in \N^k$ then $h(v) = k$. 
Note that $h(v)$ is the length of $v$ in the Ulam-Harris encoding. 
The label of a vertex $v$ of $\rt=(t,\rd)$, denoted $\ell_\rt(v)$ or $\ell(v)$,
\nomenclature[lv]{$\ell(v)$}{Sum of displacements on root-to-$v$ path.}
 is defined as the sum of the edge labels on the path between $v$ and the root. Formally, if $v=v_1\ldots v_k$ then $\ell_\rt(v)=d(\emptyset,v_1)+\ldots+d(v_1\ldots v_{k-1},v_1\ldots v_k$). 

The contour and label processes $C_\rt$
\nomenclature[Ct]{$C_\rt$}{The contour process of $\rt$; $C_{\rt}:[0,1] \to \R$.}
 and $Z_\rt$ 
\nomenclature[Zt]{$Z_\rt$}{The label process of $\rt$; $Z_{\rt}:[0,1] \to \R$.}
 are the functions from $[0,1]$ to $\R$ defined as follows. For $0\leq i \leq n$, set $C_\rt(i/(2|t|-2))=h(\theta(i))$ and $Z_\rt(i/(2|t|-2))=\ell(\theta(i))$. (Note that $C_\rt$ may be viewed as traversing an edge in time $1/(2|t|-2)$; this differs from some other works, where the contour process explores each edge in one unit of time.). 

\subsection{Notes on treelike paths}\label{sub:treelike}
The metric structure of $\rt$ may be recovered from $C=C_\rt$ as follows. For $x,y \in [0,1]$ with $x \le y$, let 
\[
\Dist_C(x,y) =\Dist_C(y,x) = C_\rt(x)+C_\rt(y)-2\inf\{C_\rt(u), x\le u\le y\}. 
\]
\nomenclature[Distcxy]{$\Dist_C(x,y)$}{$\Dist_C(x,y) = C_\rt(x)+C_\rt(y)-2\inf\{C_\rt(u), x\le u\le y\}$}
Then for all $0 \le i,j \le n$, 
$\dist_t(\theta_t(i),\theta_t(j)) = \Dist_C(i/n,j/n)$. In particular, 
if $\Dist_C(i/n,j/n) = 0$ then $\theta_t(i)=\theta_t(j)$. 
Let $\sim_C$ be the equivalence relation $\{(x,y): \Dist_C(x,y)=0\}$, and let $\cT_C = [0,1]/{\sim_C}$.  Then  
$(\cT_C,\Dist_C)$ is a metric space (here, by $\Dist_C$ we really mean its push-forward to $\cT_C$), and its subspace induced by the points $\{[i/n],0 \le i \le n\}$ is isometric to $t$. 

Using the equivalence of $\dist_t$ and $\Dist_C$ at lattice times, if $\Dist_C(i/n,j/n)=0$ then $Z_\rt(i/n)=Z_\rt(j/n)$. Since $Z_\rt$ is defined by linear interpolation, it follows that $Z_\rt(x)=Z_\rt(y)$ whenever $\Dist_C(x,y)=0$, and that the push-forward $Z$ of $Z_\rt$ to $\cT_C$ is well-defined and continuous for $\Dist_C$. We then clearly have $Z([i/n])=\ell_\rt(\theta_t(i))$ for all $0 \le i \le n$.

The two preceding paragraphs may be viewed as motivation for the following general construction. Say $\zeta \in C([0,1],\R^+)$ is an excursion if $\zeta(0)=\zeta(1)=0$. If $\zeta$ is an excursion then we may define $\Dist_\zeta$ and $\cT_\zeta$ just as above, and $(\cT_\zeta,\Dist_\zeta)$ is always a compact metric space. (In fact, it is always an $\R$-tree.)
%Say $\zeta \in C([0,1],\R^+)$ is an excursion if $\zeta(0)=\zeta(1)=0$. For excursions $\zeta$ we let $\Dist_\zeta(x,y) = \zeta(x)+\zeta(y)-2\inf\{\zeta(u), u \in [x \wedge y,x\vee y]\}$; this extends the above definition. 

Now fix a pair $(\zeta,f)$ with $\zeta \in C([0,1],\R^+)$ and $f \in C([0,1],\R)$. We say $(\zeta,f)$ is a {\em tree-like path} 
\nomenclature[tree-like path]{Tree-like path}{A pair $(\zeta,f)$ with $\zeta$ an excursion, $f(0)=f(1)=1$, and $\Dist_\zeta(x,y)=0 \Rightarrow f(x)=f(y)$.}
if $\zeta$ is an excursion, $f(0)=f(1)=0$, and 
for all $x,y \in [0,1]$, if $\Dist_\zeta(x,y)=0$ then 
$f(x)=f(y)$. This implies that the push-forward of $f$ to $\cT_\zeta$ is well-defined and is continuous for $\Dist_\zeta$. 

It follows from the results in \cite{MarckertMokkademSnake} that if $(C,Z)$ is a distributional limit as in Theorem~\ref{thm:main} then $(C,Z)$ is a tree-like path, a fact which will be useful in the proof of the theorem. 

\subsection{Valid distributions and symmetrization}\label{sub:validAndSymmetric}
Let $(T,\rD)$ be a random labeled tree with type space $S$. Then $(T,\rD)$ satisfies (ii) and (iii) in the definition of validity if and only if there exists a set $\{\pi^r_{\rs}: r \in S, \rs \in \bigcup_{k \ge 1} S^k\}$, with each  $\pi^r_{(s_1,\ldots,s_k)}$ a probability measure on $\R^k$, such that the following holds. For any plane tree $t$ and any sets $(B_v,v \in V(t))$ with $B_v$ a Borel subset of $\R^{k_t(v)}$ for all $v$, 
\begin{equation}\label{eq:validity_rewrite}
\p{T=t, \forall v \in V(t), D_v \in B_v} = \p{T=t} \cdot \prod_{v \in V(t)} \pi^{s(v)}_{\cht(v)}(B_v)\, .
\end{equation}
In other words, writing $\pi_T$ for the conditional law of $\rD$ given $T$, the identity (\ref{eq:validity_rewrite}) states that
$\pi_T = \otimes_{v \in V(T)} \pi_{\cht(v)}^{s(v)}$. 
\medskip

Next, it is clear that if $\nu$ is valid 
then $\nu^\sym$ is also valid. 
 Moreover, the definition of symmetrization implies that if the displacement laws under $\nu$ are described by the measures 
$\{\pi^u_{\rs}: u \in S, \rs \in \bigcup_{k \ge 1} S^k\}$, then
 the displacement laws under $\nu^{\sym}$ are described by the measures 
$\{\pi^{u,\sym}_{\rs}: u \in S, \rs \in \bigcup_{k \ge 1} S^k\}$, where for any type $u$, any $k \ge 1$,  any type vector $\rs \in S^k$ and any Borel $B \subset \R^k$, we have 
\begin{equation}\label{def:piSym}
\pi^{u,\sym}_{\rs} (B) = \frac{1}{k!}\sum_{\sigma \in \mathfrak{S}_k} \pi^{u}_{\sigma(\rs)}(\sigma(B)).
\end{equation}

\subsection{Valid distributions and labeled Galton-Watson trees}\label{sub:validAndGW}
A multitype Galton-Watson tree with type space $S$ is defined by a collection $\mathrm{p}=(p^s,s \in S)$ with each $p^s$ a probability distribution on $S^\mathrm{fin}$, such that if $\vec{v},\vec{w} \in S^\mathrm{fin}$ are such that $\vec{w}$ is a permutation of $\vec{v}$ then $p^s(\vec{v})=p^s(\vec{w})$ for all $s \in S$. Observe that, writing $n_x(\vec{v})$ for the number of entries of $\vec{v}$ equal to $x \in S$, this means each $p^s$ is uniquely determined by the values 
\[
p^s(\{\vec{v} \in S^\mathrm{fin}: \forall x \in S, n_x(\vec{v})=i_x\}), 
%{\textcolor{cmar}{\quad \text{where }n_x(\vec{v}):=|\{j\,|\,\vec{v}_j=x\}|,}}
\]
as $(i_x,x \in S)$ ranges over collections of non-negative integers with finite sum. 

A random tree $\cT$ is Galton-Watson$(\mathrm{p})$-distributed if the following holds. For all $n \ge 1$ and all rooted plane trees $t$ with node types in $S$ and with height at most $n$, writing $s$ for the type of the root of $t$, 
\[
\p{\cT_{\le n} = t~|~s(\emptyset)=s} = \prod_{v \in V(t_{<n})} p^{s(v)}(\cht_t(v))\, .
\]
Here we have written $\cT_{\le n}$ for the subtree of $\cT$ consisting of nodes at distance at most $n$ from the root (and likewise defined $t_{<n}$); also, we use the convention that $\cht_t(v)=()$ if $v$ has no children. 
In particular, this implies that, for $t$ a finite tree,
\[
\p{\cT = t~|~s(\emptyset)=s} = \prod_{v \in V(t)} p^{s(v)}(\cht_t(v))\, .
\]
From the assumption that the offspring distributions $(p^s,s \in S)$ are permutation-invariant, the following result is immediate.
\begin{prop}\label{prop:GWSym}
Let $\cT$ be a multitype Galton-Watson tree with type space $S$. Fix any type $s \in S$ and integer $n \ge 1$ such that $\p{s(\emptyset)=s,|\cT|=n}>0$. Then the conditional probability measure $\probC{\cdot}{s(\emptyset)=s,|\cT|=n}$ is symmetric. 
\end{prop}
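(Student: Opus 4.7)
The plan is to verify directly from the explicit formula for the law of a multitype Galton-Watson tree that the probability of a given finite plane tree depends only on the multitype \emph{rooted} isomorphism class of that tree, and not on the plane embedding. Once this is established, conditioning on the root type and on $|\cT|=n$ preserves the identity.

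First, I would recall the formula given in the excerpt: for any finite rooted multitype plane tree $t$ whose root has type $s$,
\[
\p{\cT = t~|~s(\emptyset)=s} = \prod_{v \in V(t)} p^{s(v)}(\cht_t(v)).
\]
Next, fix $t'\in [t]$, i.e.\ $t'$ is isomorphic to $t$ as a rooted multitype tree (but perhaps not as a plane tree). By the definition of $[t]$ there is a bijection $\varphi:V(t)\to V(t')$ that sends the root to the root, maps children to children, and preserves types. In particular, for each $v\in V(t)$, the child type vector $\cht_{t'}(\varphi(v))$ is a \emph{permutation} of $\cht_t(v)$, since the multiset of children-types of $v$ in $t$ coincides with that of $\varphi(v)$ in $t'$, only the plane ordering can differ.

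Here one invokes the standing permutation-invariance of the offspring distributions: $p^s(\vec v)=p^s(\vec w)$ whenever $\vec w$ is a permutation of $\vec v$. Applying this to every $v\in V(t)$ gives $p^{s(v)}(\cht_t(v))=p^{s(\varphi(v))}(\cht_{t'}(\varphi(v)))$, and, since $\varphi$ is a bijection, the two products
\[
\prod_{v \in V(t)} p^{s(v)}(\cht_t(v)) \qquad \text{and} \qquad \prod_{v' \in V(t')} p^{s(v')}(\cht_{t'}(v'))
\]
are equal. Therefore $\p{\cT=t\mid s(\emptyset)=s}=\p{\cT=t'\mid s(\emptyset)=s}$. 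Finally, $|t|=|t'|=n$ because $\varphi$ is a bijection on vertex sets, so conditioning further on $\{|\cT|=n\}$ only divides both sides by the same quantity $\p{|\cT|=n\mid s(\emptyset)=s}>0$, yielding
\[
\probC{\cT=t}{s(\emptyset)=s,\,|\cT|=n}=\probC{\cT=t'}{s(\emptyset)=s,\,|\cT|=n},
\]
which is exactly the definition of symmetry for the conditional law.

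There is no real obstacle here: the proposition is essentially a direct consequence of the built-in permutation-invariance of the offspring distributions. The only care needed is in noting that the isomorphism $\varphi:V(t)\to V(t')$ witnessing $t'\in [t]$ need not respect the plane order at each node, which is precisely why one needs (and uses) the permutation-invariance assumption rather than just a tree isomorphism as plane trees.
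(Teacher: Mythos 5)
Your proof is correct and is precisely the argument the paper has in mind: the paper states the proposition with no written proof, declaring it "immediate" from the permutation-invariance of the offspring distributions, and your write-up simply spells out that immediate argument (product formula, reindexing along the rooted multitype isomorphism, permutation-invariance of each factor, and division by the common normalizing constant). No gaps; nothing further to add.
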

\begin{remark}
We have built symmetry into our definition of multitype Galton-Watson trees. This is relatively standard (for example, it is also the case in~\cite{MiermontInvarianceTrees}), possibly because from the perspective of the (multitype) generation size process, there is no loss of generality in restricting to the symmetric case. One could of course study families of offspring distributions $\mathrm{p}=(p^s,s \in S)$  which are not assumed to be permutation-invariant; but this is beyond the scope of the current work.
\end{remark}

We next consider how the definition of valid laws interacts with that of multitype Galton-Watson trees.
Suppose that $(\cT,D)$ is a random labeled plane tree whose law is valid, and suppose there is $\mathrm{p}$ such that the underlying plane tree $\cT$ is Galton-Watson$(\mathrm{p})$-distributed.
Then for any finite plane tree $t$, writing $s$ for the type of the root of $t$, we have 
\[
\p{\cT=t} = \p{s(\emptyset)=s}\cdot \prod_{v \in V(t)} p^{s(v)}(\cht(v)), 
\]
and by (\ref{eq:validity_rewrite}), for any Borel sets $(B_v,v \in V(t))$, we then have
\begin{align*}
 \p{T=t, \forall v \in V(t), D_v \in B_v} 
& = \p{s(\emptyset)=s}\cdot \prod_{v \in V(t)} p^{s(v)}(\cht(v))\cdot\pi^{s(v)}_{\cht(v)}(B_v)\\
\, ,
\end{align*}
for an appropriate family $\{\pi^u_{\rs}: u \in S, \rs \in \bigcup_{k \ge 1} S^k\}$ of displacement laws. 
Moreover, for any $n \in \N$, we have 
\begin{align*}
&  \probC{\cT=t, \forall v \in V(t), D_v \in B_v}{|\cT|=n} \\
& = \frac{1}{\p{|\cT|=n}}\p{s(\emptyset)=s}\cdot \prod_{v \in V(t)} p^{s(v)}(\cht(v))\cdot\pi^{s(v)}_{\cht(v)}(B_v) \\
& = \probC{\cT=t}{|\cT|=n} 
\cdot \prod_{v \in V(t)} \pi^{s(v)}_{\cht(v)}(B_v)\, ,
\end{align*}
so the conditional law of $(\cT,D)$ given that $|\cT|=n$ is again valid. A similar logic shows that the conditional law of $(\cT,D)$ conditional on the type of its root, and on containing a fixed number of vertices of a given type, is also valid; we mention this as such a conditioning will arise later in the paper. 

\subsection{Locally centered, centered and globally centered displacements}\label{sub:centering}
Let $(T,\rD)$ be a tree sampled from a valid distribution. We consider $\bm{\pi}=\{\pi^r_{\rs}: r \in S, \rs \in \bigcup_{k \ge 1} S^k\}$, the family of the distributions of the vector of displacements. Recall that for $\vec{v} =(v_1,\ldots,v_k)\in S^{\mathrm{fin}}$ and $x \in s$ we write $n_x(\vec{v})=|\{1 \le i \le k:v_i=x\}|$. 
\begin{definition}
For each $\pi^r_{\rs}\in \bm{\pi}$, let $(X^r_{\rs,i},1\leq i \leq |\rs|)$ have law $\pi^r_\rs$. We say $\bm{\pi}$ is \emph{locally centered} if $\E{X^r_{\rs,i}}=0$ for all $r\in S$, $\rs \in S^{\mathrm{fin}}$ and $i\in \{1,\ldots,|\rs|\}$.

%\commar{**** $p$ was not defined (see also previous comment about $n_x$), and the notation is confusing due to $p$ already used for the offspring distribution of GW trees. I guess that $p:S^k\rightarrow \mathbb{Z}_+^k$, with $p(\rs)_x=n_x(\rs)$, so I suggest the following:****}
%The family $\bm{\pi}$ is \emph{centered} if for any $k\in \mathbb{Z}_+$, for any $r\in S$ and any $\rz\in \mathbb{Z}_+^{S}$ with $|\rz|_1=k$, 
The family $\bm{\pi}$ is \emph{centered} if for any $k\in \mathbb{Z}_+$, for all $\rz=(z_s,s \in S)\in \mathbb{Z}_+^{S}$ with $\sum_{s \in S} z_s=k$ and for all $r\in S$, 
\[
\sum_{\{\rs\in S^k:(n_x(\rs),x\in S)=\rz\}} \sum_{i=1}^k\E{X^r_{\rs,i}}=0.
\]
%\commar{****Or if we want to stay closer to the initial statement.****}\modmar{Let $\Pi:S^k\rightarrow \mathbb{Z}_+^k$ be the function defined by $\Pi(\rs)_x=n_x(\rs)$ for any $\rs\in S^k$ and any $x\in S$. Then, the family $\bm{\pi}$ is \emph{centered} if for any $k\in \mathbb{Z}_+$, for any $r\in S$ and any $\rz=\in \mathbb{Z}_+^{S}$ with $|\rz|_1=k$, 
%\[
%\sum_{\substack{\rs\in S^k\\\Pi(\rs)=\rz}}\sum_{i=1}^k\E{X^r_{\rs,i}}=0.
%\]
%}

% for all $r\in \mathbb{Q}$, $\rs \in \bigcup_{k \ge 1} \mathbb{Q}^k$ and $i\in \{1,\ldots,|\rs|\}$.
% The family $\bm{\pi}$ is \emph{centered} if for any $r\in \mathbb{Q}$ and any $\rz\in \mathbb{Z}_+^{\mathbb{Q}}$, 
% \[
% \sum_{\substack{\rs\in \mathbb{Q}^k\\p(s)=\rz}}\sum_{i=1}^k\E{X^r_{\rs,i}}=0.
% \]
\end{definition}
The assumption that $\bm{\pi}$ is locally centered appears often in the literature about the convergence of labeled Galton-Watson trees. As already alluded to in the introduction, the families of trees we want to study are not locally centered -- however, they are centered. The next claim, which is immediate from~\eqref{def:piSym}, says symmetrization turns centered displacements into locally centered displacements.
% The following claim  should be clear from the definition of symmetrization. 
\begin{claim}
If $(T,\rD)$ is a tree sampled from a valid distribution with centered displacements, then the family of displacement distributions of $(T^{\sym},\rD^{\sym})$ is locally centered.
\end{claim}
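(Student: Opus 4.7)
My plan is to verify the claim by a direct computation from the symmetrization formula \eqref{def:piSym}, for arbitrary $u\in S$, $\rs=(s_1,\ldots,s_k)\in S^k$, and $i\in\{1,\ldots,k\}$; write $\rz=(n_x(\rs),x\in S)$ for the associated count vector.

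The first step is to unfold the expectation. Adopting the convention $(\sigma(x))_j=x_{\sigma^{-1}(j)}$ (so that $\sigma(B)=\{\sigma(x):x\in B\}$ is the action implicit in \eqref{def:piSym}), one checks that a random vector $Y$ with law $\pi^{u,\sym}_\rs$ can be sampled as $Y=\sigma^{-1}(X)$ with $\sigma\in\mathfrak{S}_k$ uniform and, conditionally on $\sigma$, $X\sim\pi^u_{\sigma(\rs)}$; in particular $Y_i=X_{\sigma(i)}$. Taking expectations yields
\[
\E{X^{u,\sym}_{\rs,i}} \;=\; \frac{1}{k!}\sum_{\sigma\in\mathfrak{S}_k}\E{X^u_{\sigma(\rs),\sigma(i)}}.
\]

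The second step is to reparametrize this sum by the pair $(\rs',j)=(\sigma(\rs),\sigma(i))$. Since $(\sigma(\rs))_{\sigma(i)}=s_{\sigma^{-1}(\sigma(i))}=s_i$ for every $\sigma$, the pairs appearing in the sum are exactly those with $n(\rs')=\rz$ and $s'_j=s_i$; a direct count shows that each such pair is realized by exactly $(z_{s_i}-1)!\prod_{x\neq s_i}z_x!$ permutations $\sigma$ (one freely chooses a bijection between the positions of type $s_i$ in $\rs$ and those in $\rs'$ that sends $i$ to $j$, and then bijections for the remaining types). Hence
\[
\E{X^{u,\sym}_{\rs,i}} \;=\; \frac{(z_{s_i}-1)!\,\prod_{x\neq s_i}z_x!}{k!}\sum_{\substack{\rs'\in S^k:\,n(\rs')=\rz \\ j\in\{1,\ldots,k\}:\,s'_j=s_i}}\E{X^u_{\rs',j}}.
\]

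The final step is to invoke the centered hypothesis, applied to the inner double sum: this is precisely a centered sum over all $(\rs',j)$ with prescribed count vector $\rz$ and prescribed type $s_i$ at the summation position, so it vanishes, giving $\E{X^{u,\sym}_{\rs,i}}=0$. Since $u,\rs,i$ were arbitrary, the family of displacement distributions of $(T^{\sym},\rD^{\sym})$ is locally centered. The whole argument is essentially a change of variable under the $\mathfrak{S}_k$-action, and the only mild subtlety is fixing the convention for $\sigma(\cdot)$ so that the combinatorial count in the reparametrization step is correct; modulo this bookkeeping, the argument is, as the claim's preamble suggests, immediate from \eqref{def:piSym}.
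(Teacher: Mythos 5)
Your computation is, as far as it goes, the right way to flesh out what the paper simply declares ``immediate from \eqref{def:piSym}'': the paper offers no argument at all, and your unfolding of $\E{X^{u,\sym}_{\rs,i}}$ as $\frac{1}{k!}\sum_{\sigma\in\mathfrak{S}_k}\E{X^u_{\sigma(\rs),\sigma(i)}}$ and the multiplicity count $(z_{s_i}-1)!\prod_{x\neq s_i}z_x!$ are both correct. The problem is the last step. Because $(\sigma(\rs))_{\sigma(i)}=s_i$ for every $\sigma$, your reparametrized sum runs only over pairs $(\rs',j)$ with $s'_j=s_i$, and you then assert that this type-restricted sum vanishes because it is ``precisely a centered sum \ldots\ with prescribed type $s_i$ at the summation position.'' But the paper's definition of \emph{centered} carries no such restriction: it asserts only that $\sum_{\rs'}\sum_{j=1}^{k}\E{X^u_{\rs',j}}=0$ with $j$ ranging over all positions, of all types. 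The two conditions coincide only when $\rz$ is supported on a single type. In general your identity shows that $\E{X^{u,\sym}_{\rs,i}}$ depends only on the type $y=s_i$ (call the common value $m_y$), and the stated hypothesis yields only $\sum_{y}z_y m_y=0$, not $m_y=0$ for each $y$: for instance with $S=\{a,b\}$, $z_a=z_b=1$ and $\E{X^u_{(a,b),1}}=\E{X^u_{(b,a),2}}=1=-\E{X^u_{(a,b),2}}=-\E{X^u_{(b,a),1}}$, the centering hypothesis as written holds but the symmetrized law is not locally centered. So either you are tacitly invoking a per-child-type refinement of ``centered'' (which is what the claim really requires, and what in fact holds for the BDG mobiles via Lemma~2 of \cite{MiermontInvariance}), or the final step fails; you should state and use that stronger condition explicitly rather than attributing it to the paper's definition.
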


For Galton-Watson trees, a number of asymptotic results have been obtained for displacements which are not locally centered but satisfy weaker centering assumptions. For example, in~\cite{MarckertLineage}, Marckert studies the convergence to the Brownian snake for labelled single type Galton-Watson trees, where the offspring distribution $\zeta$ is assumed to have bounded support and where the displacements are assumed to be {\em globally} centered, in that 
\[
	\sum_{k\geq 0}\zeta(k)\sum_{i=1}^k\E{X_{k,i}}=0,
\]
where $(X_{k,1},\ldots,X_{k,k})$ has the distribution of vector displacements to the children for a node with $k$ children.
We could not find a way to use symmetrization to transform globally centered into locally centered displacements. It is an open problem to know whether the bounded support assumption in Marckert's work can be relaxed.
% We could not find a way to use symmetrization to transform globally centered trees into locally centered trees. It is an open problem to know whether the bounded support assumption in Marckert's work can be relaxed.

\subsection{Subsampling in labeled trees}
A important step of the proof of Theorem~\ref{thm:main} is accomplished by the following lemma, which relates the laws of subtrees obtained by sampling in random labeled trees and their symmetrizations. Informally, the lemma states that the distribution of the subtree spanned by a set of randomly sampled vertices is the same in a random labeled tree and in its symmetrization, provided that all label displacements on child edges incident to branchpoints of the subsampled trees are ignored. 

For a plane tree $t$ and a sequence $\rv=(v_1,\ldots,v_k) \in V(t)^k$,  
write $t(\rv)$ 
\nomenclature[tv]{$t(\rv)$}{The subtree of $t$ spanned by vertices of $\rv$ and their ancestors.}
for the subtree of $t$ spanned by the vertices of $\rv$ together with all their ancestors in $t$. 
We view $t(\rv)$ as a plane tree by using the plane structure of $t$. The Ulam-Harris labels in $t(\rv)$ need not agree with those in $t$, so for a vertex $v$ which is an ancestor
% a weak ancestor 
of a vertex in $\rv$ (not necessarily strict), we write $U(\rv,v)$
\nomenclature[Uvv]{$U(\rv,v_i)$}{Ulam-Harris label of node of $t(\rv)$ corresponding to $v_i$.}
 for (the Ulam-Harris label of) the node corresponding to $v$ in $t(\rv)$. We also let $U(\rv)=(U(\rv,v_1),\ldots,U(\rv,v_k))$; this vector plays a key role in the coming lemma.

If $\rd \in \R^{e(t)}$ is a labeling of $t$ then we let $\rd(\rv)$ 
\nomenclature[dv]{$\rd(\rv)$}{Pushforward of $\rd$ to $\rt(\rv)$.}
be the pushforward of $\rd$ to $t(\rv)$; so if $e=uu' \in e(t(\rv))$ with $u=U(\rv,v),u'=U(\rv,v')$ then 
$\rd(\rv)_{uu'}=\rd_{vv'}$. 
We also define a modified labeling $\rd\angles{s}$
\nomenclature[dv]{$\rd\angles{\rv}$}{Modification of $\rd(\rv)$ so child displacements at branchpoints are zero.}
 as follows. For an edge $uu' \in e(t(\rv))$ with $u$ the parent of $u'$, let 
\begin{equation}\label{eq:defdbracket}
\rd\angles{\rv}_{uu'} = 
\begin{cases}
\rd(\rv)_{uu'} & \mbox{ if } k_{t(\rv)}(u) =1 \\
0		& \mbox{ otherwise.} 
\end{cases}
\end{equation}
Think of $\rd\angles{\rv}$ as ``ignoring displacements at branchpoints of $t(\rv)$''. 

Let $\rT=(T,D)$ be a random labeled plane tree. We say that a random vector $\rR=(R_1,\ldots,R_k)$ is uniformly sampled from $\rT$ if 
for all plane trees $t$, Borel sets $(B_e,e \in e(t))$ and vectors $\rr=(r_1,\ldots,r_k) \in V(t)^k$, 
\[
\p{T=t,\rR=\rr, D_e \in B_e~\forall e \in e(t)} = \frac{1}{|V(t)|^k} \p{T=t, D_e \in B_e~\forall e \in e(t)}\, .
\]

\begin{lem}\label{lem:eqdist}
Let $\rT=(T,\rD)$ be a random labeled multitype plane tree with valid law $\nu$, and let $\rT^\sym=(T^\sym,\rD^\sym)$ have law $\nu^\sym$. 

Fix $k\in \N$, and let $\rR=(R_1,\ldots,R_k)$ and $\rQ=(Q_1,\ldots,Q_k)$ be random vectors of length $k$, uniformly sampled from $\rT$ and $\rT^\sym$ respectively. 
Then $(T(\rR),\rD\angles{\rR},U(\rR))$
and $(T^\sym(\rQ),\rD^\sym\angles{\rQ}, U(\rQ))$ are equal in distribution. 
\end{lem}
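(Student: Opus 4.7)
My plan is to verify the identity of distributions by checking probabilities of generating events. Fix a plane tree $t'$, an Ulam--Harris sequence $\ru = (u_1,\ldots,u_k) \in V(t')^k$, and a Borel set $A \subset \R^{E(t')}$; the goal is to compare the $\nu$ and $\nu^\sym$ probabilities of the event $\{T(\rR)=t',\ \rD\angles{\rR}\in A,\ U(\rR)=\ru\}$. Since $\rd\angles{\rv}$ forces the displacement on every branchpoint child-edge to vanish, I may assume $A$ is concentrated on configurations satisfying this constraint. Both probabilities will be written as sums over \emph{extensions} of $(t',\ru)$ -- pairs $(t,\rv)$ with $t(\rv)=t'$ and $U(\rv)=\ru$ -- and compared after a suitable reorganization of the symmetrized sum.

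\textbf{Factoring the displacement contribution.} Conditions (ii)--(iii) of validity imply that, given $T=t$, the displacement vector $\rD\angles{\rv}$ factors into independent contributions at each vertex of $t$. The only non-trivial factors come from non-branchpoint internal vertices of $t(\rv)$: at such a vertex $v$, the contribution is the one-dimensional marginal of $\pi^{s(v)}_{\cht_t(v)}$ at the coordinate $i_v$ indicating the unique child of $v$ belonging to $V(t(\rv))$. Using property~(i), $\mu=\mu^\sym$, so the entire difference between the $\nu$ and $\nu^\sym$ versions of the sum is that the latter substitutes $\pi^{s(v),\sym}$ for $\pi^{s(v)}$ in these marginals.

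\textbf{The key change of variables.} Expanding each symmetrized marginal via~\eqref{def:piSym} introduces, at each non-branchpoint internal vertex $v$ of $t(\rv)$, an average over permutations $\sigma_v \in \mathfrak{S}_{k_t(v)}$ of the twisted marginal of $\pi^{s(v)}_{\sigma_v(\cht_t(v))}$ at coordinate $\sigma_v(i_v)$; swapping sums produces a combined sum over triples $(t,\rv,(\sigma_v))$. I then extend $(\sigma_v)$ to a full element $\sigma \in \Permvecs_t$ by declaring it to be the identity at every vertex outside the set of non-branchpoint internal vertices of $t(\rv)$, and perform the change of variables $(t,\rv) \mapsto (\sigma(t),\sigma(\rv))$. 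The crucial observation is that this map preserves the planar spanned subtree: at every non-branchpoint internal vertex of $t(\rv)$ the unique sampled child always sits at position~$1$ in the spanned subtree regardless of its slot $\sigma_v(i_v)$ among siblings in $\sigma(t)$, and at branchpoints of $t(\rv)$, $\sigma$ acts trivially by construction. Consequently $\sigma(t)(\sigma(\rv)) = t'$ and $U(\sigma(\rv)) = \ru$. The symmetry of $\mu$ absorbs the substitution $\mu(t) \to \mu(\sigma(t))$ at no cost, and the twisted marginal at $v$ becomes exactly the canonical marginal at the new vertex $\sigma(v)$. After relabeling, the summand no longer depends on $(\sigma_v)$, so summing over the $\prod_v k_t(v)!$ permutations exactly cancels the normalization from~\eqref{def:piSym}. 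What remains is precisely the un-symmetrized sum -- that is, the $\nu$-probability of the original event.

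\textbf{Main obstacle.} The delicate step is the bookkeeping needed to verify that the map above is a bijection on triples $(t,\rv,(\sigma_v))$ and interacts correctly with the planar spanned subtree. In particular, one must separate the role of $\sigma_v$ at non-branchpoints -- where the reshuffling is invisible on the spanned subtree and can be freely absorbed using the symmetry of $\mu$ -- from branchpoints of $t(\rv)$, where a nontrivial reshuffling would genuinely alter the planar embedding of $t(\rv)$ and break the bijection. This dichotomy is precisely why the lemma uses $\rd\angles{\rv}$ rather than $\rd(\rv)$: symmetrization erases the information carried by branchpoint displacements, so the statement can only involve the non-branchpoint displacements that survive.
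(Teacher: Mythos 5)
Your argument is correct and is essentially the paper's own proof: the same dichotomy between branchpoints and non-branchpoint internal vertices of the spanned subtree, the same use of the symmetry of $\mu$ and of the uniform sampling to absorb the tree permutation, and the same cancellation of the $\prod_v k_t(v)!$ normalization from \eqref{def:piSym} against a sum over permutation vectors that act as the identity at branchpoints. The only difference is packaging: the paper realizes your change of variables as an explicit auxiliary labeled tree $\hat T=\sigma(T)$ with $\sigma$ uniform on the permutations fixing the branchpoints (and with branchpoint displacements zeroed), which makes the multiplicity bookkeeping you flag as the main obstacle automatic, since the events $\{\sigma=\tau^*\}$ are disjoint for distinct $\tau$.
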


\begin{proof}\label{proof:Lemma}
Given a tree $t$ and a sequence 
$\ru=(u_1,\ldots,u_k) \in V(t)^k$, let $\branch(t,\ru)$ 
\nomenclature[branchtu]{$\branch(t,\ru)$}{Vertices of $t$ which correspond to  branchpoints of $t(\ru)$.}
be the set of vertices of $t$ possessing at least two distinct children with descendants in $\ru$ (in other words, these are the vertices of $t$ which correspond to branchpoints of $t(\ru)$). Further, let $\Permvecs_{(t,\ru)}$ 
\nomenclature[Ptu]{$\Permvecs_{(t,\ru)}$}{Permutations $\sigma \in \Permvecs_t$ leaving ordering unchanged at $v \in \branch(t,\ru)$.}
be the set of vectors $\sigma=(\sigma_v,v\in t)\in \Permvecs_t$ such that $\sigma_v$ is the identity permutation for all $v\in \branch(t,\ru)$.

Now let $(T,\rD)$ have law $\nu$, let $\rR=(R_1,\ldots,R_k) \in V(T)^k$ be a random vector of length $k$ uniformly sampled from $T$, and let $\sigma \in_u \Permvecs_{(T,\rR)}$. 
% and $\sigma_0 = \sigma_0(\rR)$.
We construct another labelled tree $(\hat{T},\hat{\rD})$ from $(T,\rD)$ as follows. In words we use $\sigma$ to perform a full symmetrization at all vertices of $T$ except at vertices corresponding to branchpoints of $T(\rR)$; at the latter vertices we don't permute the children and we set all the displacements to zero.  
Formally, set $\hat{T} = \sigma(T)$, for $1\le i \le k$ set $\hat R_i = \sigma(R_i)$, and let $\hat\rR = (\hat{R}_1,\ldots,\hat{R}_k)$. Then
for $(v,vi) \in e(T)$ let 
\[ 
\hat D_{\sigma(v),\sigma(vi)}=
\begin{cases}
0& \mbox{if } v \in \branch(T,\rR)\\
D_{v,vi} & \mbox{otherwise},
\end{cases}
\]
so if $e=vw \in E(\hat{T})$ then $\hat{D}_e = D_{\sigma^{-1}(v)\sigma^{-1}(w)}$. 
Now set $\hat\rD=(\hat{D}_e,e\in e(\hat T))$. 

\begin{figure}[h]
  \begin{center}
    \subfloat[Symmetrization outside the branchpoints.]{
      \includegraphics[width=0.4\textwidth,page=1]{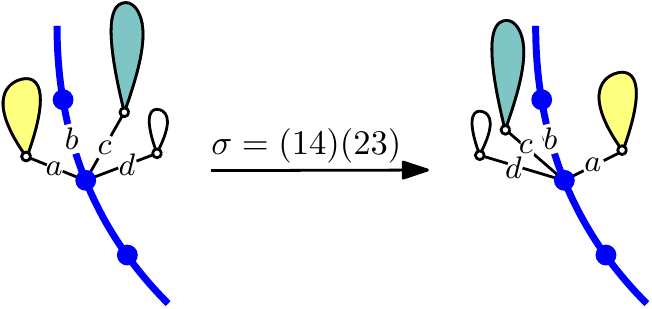}
      \label{figsub:symnormal}
                         }\qquad \qquad \qquad               
    \subfloat[Symmetrization at a branch point.]{
      \includegraphics[width=0.4\textwidth,page=2]{Figures/symmetrization.pdf}
      \label{figsub:symbranch}
                         }
    \caption{Examples of symmetrization. In both figures, the tree $\rt$ is on the left and $\hat \rt$ on the right. Labels $a$, $b$, $c$ and $d$ represent the displacements along the edges. Branches of $t(\ru)$ are represented in bold blue.}
    \label{fig:symmetrization}
  \end{center}
\end{figure}

Here is an important property of the preceding construction. For all $1\le i,j\le k$, if $R_i \prec R_j$ (where $\prec$ corresponds to the lexicographic ordering on the Ulam-Harris encoding) then  $\hat R_i \prec \hat R_j$. This immediately implies that $(T(\rR),U(\rR)) = (\hat T (\hat R),U(\hat \rR))$. Moreover, the only differences between $(T(\rR),\rD(\rR))$ and $(\hat T(\hat R),\hat{\rD}(\hat R))$ occur at nodes with at least 
% two children whose subtrees contain elements of $\rR$
two children in $T(\rR)$.
Since the displacements on edges leaving such nodes are set to 0 when passing from $\rD(\rR)$ to $\rD\angles{\rR}$  (see \eqref{eq:defdbracket}), it follows that $\hat \rD\angles{\hat \rR} = \rD\angles{\rR}$ as well, so $(\hat T(\hat \rR),\hat\rD\angles{\hat\rR},U(\hat \rR)) = (T(\rR),\rD\angles{\rR},U(\rR))$. 

For $v \in V(\hat{T})$ we write $\hat{D}_v = (\hat{D}_{v,vi},1 \le i \le k(v))$. 
Now fix a tree $t$ and a length-$k$ vector $\rr=(r_1,\ldots,r_k)$ of vertices of $t$. We will show that for any collection $(B_v, v\in V(t)\backslash \branch(t,\rr))$, with each $B_v$ a Borel set of $\R^{k_t(v)}$, 
\begin{align*}
&\p{(\hat T,\hat \rR)=(t,\rr) \text{ and } \hat D_v \in B_v\ \forall v \in V(t)\backslash \branch(t,\rr)}\\ = &\p{(T^\sym,\rQ)=(t,\rr) \text{ and }D^\sym_v \in B_v\ \forall v \in V(t)\backslash \branch(t,\rr)}.
\end{align*}
This equality implies that $(\hat T(\hat \rR),\hat\rD\angles{\hat\rR},U(\hat \rR))$ and $(T^\sym(\rQ),\rD^\sym\angles{\rQ},U(\rQ))$ are equal in distribution. Since $(\hat T(\hat \rR),\hat\rD\angles{\hat\rR},U(\hat \rR)) = (T(\rR),\rD\angles{\rR},U(\rR))$, this will prove the lemma for single type trees. 

% Since $t$ and $\rr$ are fixed, in what follows we write $\sum_\tau$ as shorthand for $\sum_{\tau \in \Permvecs_{(t,\rr)}}$ and f
For $\tau \in \Permvecs_{(t,\rr)}$, we let $\tau(t,\rr)=(\tau(t),(\tau(r_1),\ldots,\tau(r_k))$. 
For $\tau \in \Permvecs_t$, we let $\tau^{*}$ be the element of $\Permvecs_{\tau(t)}$ defined by setting $\tau^*_{\tau(v)} = (\tau_v)^{-1}$ for all $v \in t$. Note that $\tau^*(\tau(t))=t$ -- so $\tau^*$ acts as an inverse to $\tau$ -- and that if $\tau \in \Permvecs_{(t,\rr)}$ then $\tau^*\in \Permvecs_{\tau(t,r)}$.

Since $\hat T=\sigma(T)$ we have  
\begin{align*}
& \p{(\hat T,\hat \rR)=(t,\rr)\text{ and } \hat D_v \in B_v\ \forall v \in V(t)\backslash \branch(t,\rr)}\\
&=\sum_{\tau \in \Permvecs_{(t,\rr)}} 
\p{\tau^*(T,\rR)=(t,\rr),\ \sigma=\tau^*\text{ and } \hat D_v \in B_v\ \forall v \in V(t)\backslash \branch(t,\rr)}\\
&=\sum_{\tau \in \Permvecs_{(t,\rr)}}
\p{(T,\rR)=\tau(t,\rr),\ \sigma=\tau^* \text{ and } 
D_{\tau(v)} \in \tau_v(B_v)\ \forall v \in V(t)\backslash \branch(t,\rr)},
\end{align*}

Now 
note that 
 \[
 |\Permvecs_{(t,\rr)}|= \prod_{v\in V(t)\backslash \branch(t,\rr)} k_t(v)!=|\Permvecs_{\tau(t,\rr)}|,\]
 as $\branch{(\tau(t,\rr))} = \{\tau(v),\, v\in \branch(t,\rr)\}$.
Using that $T$ is symmetric, that the elements of $\rR$ are uniformly sampled from $T$, and that $\sigma \in_u \Permvecs_{(T,\rR)}$, it follows that for all $\tau \in \Permvecs_t$, 
\begin{equation}\label{eq:finmono}
\p{(T,\rR)=\tau(t,\rr),\ \sigma=\tau^*}
= 
\frac{\p{T=t}}{|t|^k}\cdot \frac{1}{|\Permvecs_{\tau(t,\rr)}|}\,
= 
\frac{\p{T=t}}{|t|^k}\cdot \frac{1}{|\Permvecs_{(t,\rr)}|}\, . 
\end{equation}
Now note that for any $v\in V(t)$ and any $\tau \in \Permvecs_{(t,\rr)}$, $s(\tau(v))=s(v)$ and $\cht(\tau(v)) = \tau_v(\cht(v))$.
Together with (\ref{eq:validity_rewrite}) and (\ref{eq:finmono}), this implies that 
\begin{align*}
 & \p{(\hat T,\hat \rR)=(t,\rr)\text{ and } \hat D_v \in B_v\ \forall v \in V(t)\backslash \branch(t,\rr)}\\
 & = 
 \frac{\p{T=t}}{|t|^k} \cdot \sum_{\tau \in \Permvecs_{(t,\rr)}} 
 \frac{1}{|\Permvecs_{(t,\rr)}|}\, 
 \prod_{v \in V(t)\backslash \branch(t,\rr)} \pi^{s(\tau(v))}_{\cht(\tau(v))}(\tau_v(B_v)) 
\\
& = 
 \frac{\p{T=t}}{|t|^k} \cdot \sum_{\tau \in \Permvecs_{(t,\rr)}} 
 \frac{1}{|\Permvecs_{(t,\rr)}|}\, 
 \prod_{v \in V(t)\backslash \branch(t,\rr)} \pi^{s(v)}_{\tau_v(\cht(v))}(\tau_v(B_v)) 
 \end{align*}
From this, applying the definition of $\pi^{u,\sym}_{\rs}$ given in \eqref{def:piSym} then yields that 
\begin{align*}
 & \p{(\hat T,\hat \rR)=(t,\rr)\text{ and } \hat D_v \in B_v\ \forall v \in V(t)\backslash \branch(t,\rr)}\\
& = 
\frac{\p{T=t}}{|t|^k}
\cdot \prod_{v \in V(t)\backslash \branch(t,\rr)} \pi_{\cht(v)}^{s(v),\sym}(B_v)\\
& = 
\p{(T^\sym,\rQ)=(t,\rr) \text{ and }D^\sym_v \in B_v\ \forall v \in t \backslash \branch(t,\rr)},
\end{align*}
as required, which completes the proof of the lemma. 
\end{proof}
The key point in the preceding argument is that, because we ignore displacements at branchpoints, the probability factorizes due to (\ref{eq:validity_rewrite}).

\section{Proof of Theorem~\ref{thm:main}}\label{sec:proofMain}
In this section we explain how Theorem~\ref{thm:main} follows from Lemma~\ref{lem:eqdist}. For the remainder of the section, fix labeled trees $\rT_n=(T_n,\rD_n)$ and $\rT_n^\sym=(T_n^\sym,\rD_n^\sym)$ for $n \ge 1$, satisfying the conditions in Theorem~\ref{thm:main}, and suppose that there exists a random $C([0,1],\R^2)$-valued random proces $(C,Z)$ and positive sequences $(a_n,n \ge 1)$ and $(b_n,n \ge 1)$ with $a_n \to 0$ and $b_n \to 0$ such that 
\[
(a_nC_{\rT_n^{\mathrm{sym}}},b_nZ_{\rT_n^{\mathrm{sym}}}) \convdist (C,Z)\]
 for the uniform topology. We must show that the 
 same distributional limit holds for $(a_nC_{\rT_n},b_nZ_{\rT_n})$. 
 In the coming arguments we write $\theta_n=\theta_{T_n}$, $C_n=C_{T_n}$, $Z_n=Z_{T_n}$ and $\ell_n=\ell_{T_n}$to simplify notation, and similarly write $\Dist_n^\sym = \Dist_{C_{T_n^\sym}}$ et cetera. 

In brief, the proof proceeds as follows. To prove convergence in distribution it suffices to prove convergence of finite-dimensional distributions (FDDs), plus tightness. Lemma~\ref{lem:eqdist} will yield convergence of random FDDs; by sampling sufficiently many random points we may use this to show convergence of arbitrary FDDs. Tightness will follow fairly easily from the convergence for the symmetrized process and fact that, aside from the plane structure, a labeled tree is identical to its symmetrization.

In the proof of the next lemma we use the following definition. 
Fix a plane tree $t$ and write $n=2|V(t)|-2$. For $0 \le y <1$, let $v(t,y)$ 
be whichever of $\theta_t(\lfloor n y \rfloor)$ and $\theta_t(\lfloor n y \rfloor+1)$ is further from the root. Note that if $U$ is uniformly distributed on $[0,1]$ then $v(t,U)$ is a uniformly random non-root node of $t$.  
\begin{lem}\label{lem:fdd}
%Let $\rT_n=(T_n,D_n)$ be as in Theorem~\ref{thm:main}. 
Let $(\unif_i,i \ge 1)$ be independent Uniform$[0,1]$ random variables, independent of the trees $\rT_n$. 
Fix $k \ge 1$ and write $(\unif_i^\uparrow,1 \le i \le k)$ for the increasing reordering of $\unif_1,\ldots,\unif_k$. Then 
\[
(a_n C_{n}(\unif_i^\uparrow), b_n Z_{n}(\unif_i^\uparrow),i \le k) \convdist (C(\unif_i^\uparrow),Z(\unif_i^\uparrow),i \le k)\, 
\]
as $n \to \infty$. 
\end{lem}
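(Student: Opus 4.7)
The plan is to reduce the FDD convergence for $\rT_n$ to the hypothesized uniform convergence for the symmetrized trees $\rT_n^\sym$ by using Lemma~\ref{lem:eqdist} to transfer the distribution of sampled-subtree data, with branchpoint displacement corrections treated as lower-order errors.

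First, set $R_{n,i}=v(T_n,U_i^\uparrow)$ and $R_{n,i}^\sym=v(T_n^\sym,U_i^\uparrow)$. From the piecewise-linear definitions of the contour and label processes,
\[
|C_n(U_i^\uparrow)-h_{T_n}(R_{n,i})|\le 1, \quad |Z_n(U_i^\uparrow)-\ell_{T_n}(R_{n,i})|\le\max_{e\in E(T_n)}|D_n(e)|,
\]
with analogous bounds on the symmetric side. Since symmetrization rearranges edge labels without altering the multiset $\{|D_n(e)|:e\in E(T_n)\}$, one has $\max_e|D_n(e)|\stackrel{d}{=}\max_e|D_n^\sym(e)|$; and the hypothesized uniform convergence of $b_nZ_{T_n^\sym}$ to a continuous limit forces the maximum inter-lattice jump of $b_nZ_{T_n^\sym}$, which equals $b_n\max_e|D_n^\sym(e)|$, to vanish in probability. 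Hence all the above error terms, after scaling by $a_n$ and $b_n$ respectively, are $o_p(1)$.

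Next, let $\rR_n^*=(v(T_n,U_i),i\le k)$ be the unordered iid sample, and introduce the \emph{truncated labels} $\tilde\ell(v)$ obtained by summing along the root-to-$v$ path only the displacements in $\rD_n\angles{\rR_n^*}$ (zeroing out the displacements on edges whose parent is a branchpoint of the spanning tree $T_n(\rR_n^*)$). Since $|\ell_{T_n}(R_{n,i})-\tilde\ell(R_{n,i})|$ is a sum of at most $k-1$ edge displacements, we have $b_n|\ell_{T_n}(R_{n,i})-\tilde\ell(R_{n,i})|\le(k-1)b_n\max_e|D_n(e)|\to 0$ in probability, and likewise on the symmetric side. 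Conditionally on the underlying tree (whose law is the same on both sides by symmetry of the valid measure $\mu_n$), $\rR_n^*$ is iid uniform on non-root vertices, matching the hypothesis of Lemma~\ref{lem:eqdist} up to the negligible root-coincidence event of probability $O(k/|T_n|)$. After accounting for the sort permutation relating $\rR_n^*$ to $\rR_n$---which is driven by the common external randomness $(U_i)$ and whose joint action on the spanning-tree data is structurally identical on both sides---Lemma~\ref{lem:eqdist} yields
\[
(h_{T_n}(R_{n,i}),\tilde\ell(R_{n,i}))_{i\le k}\stackrel{d}{=}(h_{T_n^\sym}(R_{n,i}^\sym),\tilde\ell^\sym(R_{n,i}^\sym))_{i\le k}.
\]

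Combining everything,
\[
(a_nC_n(U_i^\uparrow),b_nZ_n(U_i^\uparrow))_{i\le k}=(a_nC_{T_n^\sym}(U_i^\uparrow),b_nZ_{T_n^\sym}(U_i^\uparrow))_{i\le k}+o_p(1),
\]
with equality in distribution modulo the $o_p(1)$ error. The right-hand side converges in distribution to $(C(U_i^\uparrow),Z(U_i^\uparrow))_{i\le k}$ by the hypothesis and the continuous mapping theorem: since $U$ is independent of the trees and $(C,Z)$ is continuous, the evaluation functional $((f,g),u)\mapsto(f(u_i),g(u_i))_{i\le k}$ is almost surely continuous at the limit. The main obstacle is controlling $b_n\max_e|D_n(e)|$, cleanly resolved via the symmetry-invariance of the displacement multiset; the remainder is careful bookkeeping of how Lemma~\ref{lem:eqdist} interacts with the $U$-induced sort ordering.
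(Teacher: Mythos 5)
Your proposal is correct and follows essentially the same route as the paper's proof: sample the $k$ uniform vertices, invoke Lemma~\ref{lem:eqdist} to identify the spanning-tree data with branchpoint displacements zeroed, bound the resulting label corrections by $(k-1)$ times the maximal edge displacement, and kill that error using the invariance of the edge-label multiset under symmetrization together with the continuity of the limit $Z$. The only cosmetic difference is that the paper works with an explicit coupling making the truncated data a.s.\ equal (and notes the contour values then coincide exactly), whereas you phrase the same argument as equality in distribution plus $o_p(1)$ errors on each side.
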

\begin{proof}
We note at the outset that, since the entries of $(X_i,i \ge 1)$ are independent of the trees $\rT_n$, they are also independent of $C_n$ and of $Z_n$, since these two random functions are measurable with respect to $\rT_n$.

Let $\rR=(R_1,\ldots,R_k)$ and $\rQ=(Q_1,\ldots,Q_k)$ be vectors of independent uniform samples from $\rT_n$ and $\rT_n^\sym$ as in Lemma~\ref{lem:eqdist}. (We leave the dependence of $\rR$ and $\rQ$ on $n$ implicit.)
The conclusion of that lemma implies that $\rT_n(R)$ conditioned on the event that $\emptyset \notin R$ has same distribution as $\rT_n^\sym$ conditioned on the event that $\emptyset \notin Q$.

For each $1 \le i \le k$, let $R_i'=v(T_n,\unif_i)$ and let $Q_i' = v(T_n^\sym,\unif_i)$; then let $\rR'=(R_i',i \le k)$ and let $\rQ'=(Q_i',i \le k)$. 
Let $E$ be the event that $\emptyset \not \in \{R_1,\ldots,R_k\}$, so the conditional law of 
$(\rT_n( \rR),U(\rR))$ given $E$ is the law of 
$(\rT_n( \rR'),U(\rR'))$. Similarly, the conditional law of 
$(\rT_n^\sym( \rQ),U(\rQ))$ given $\emptyset \not \in \{Q_1,\ldots,Q_k\}$ is the law of 
$(\rT^\sym_n( \rQ'),U(\rQ'))$.
 By Lemma~\ref{lem:eqdist}, it then follows that 
$(T_n(\rR'),\rD\angles{\rR'},U(\rR'))$ and $(T_n^\sym(\rQ'),\rD^\sym\angles{\rQ'},U(\rQ'))$ are also identically distributed. 

% The conclusion of that lemma implies that we may find a coupling 
% %of $\rT,\rT^\sym,\rR$ and $\rQ$ 
% under which 
% \[
% (T(\rR),\rD\angles{\rR},U(\rR) )
% \aseq (T^\sym(\rQ),\rD^\sym\angles{\rQ}, U(\rQ))\, .
% \]

% For each $1 \le i \le k$, let $R_i'=v(T_n,\unif_i)$ and let $Q_i' = v(T_n^\sym,\unif_i)$; then let $\rR'=(R_i',i \le k)$ and let $\rQ'=(Q_i',i \le k)$. 
% Let $E$ be the event that $\emptyset \not \in \{R_1,\ldots,R_k\}$, so the conditional law of 
% $(\rT( \rR),U(\rR))$ given $E$ is the law of 
% $(\rT_n( \rR'),U(\rR'))$. 
% Since $Q\aseq R$, 
% the event $E$ is equivalent to the event that 
% $\emptyset \not \in \{Q_1,\ldots,Q_k\}$. 
% Thus the conditional law of 
% $(\rQ,\rT^\sym( \rQ))$ given $E$ is the law of 
% $(\rQ',\rT_n^\sym( \rQ'))$. It follows that 
% $(T_n(\rR'),\rD\angles{\rR'},U(\rR'))$ and $(T_n^\sym(\rQ'),\rD^\sym\angles{\rQ'},U(\rQ'))$ are identically distributed. 

Using the conclusion of the preceding paragraph, we may fix a coupling which makes 
\[
(T_n(\rR'),\rD\angles{\rR'},U(\rR'))\aseq (T_n^\sym(\rQ'),\rD^\sym\angles{\rQ'},U(\rQ'))\, .
\]
%$(\rR',\rT_n\langle \rR'\rangle) \aseq (\rQ',\rT_n^\sym\langle \rQ'\rangle)$. 
Letting $R_i^\uparrow = v(T_n,\unif_i^\uparrow)$, $Q_i^\uparrow = v(T_n^\sym,\unif_i^\uparrow)$ and 
$\rR^\uparrow = (R_i^\uparrow,i \le k)$, $\rQ^\uparrow = (Q_i^\uparrow,i \le k)$, we then have 
\begin{equation}\label{eq:as_qr_identity}
(T_n(\rR^{\uparrow}),\rD\angles{\rR^{\uparrow}},U(\rR^{\uparrow}))\aseq (T_n^\sym(\rQ^{\uparrow}),\rD^\sym\angles{\rQ^{\uparrow}},U(\rQ^{\uparrow}))\, .
\end{equation}

Now write $\Delta_n$ and $\Delta_n^\sym$ for the greatest absolute values of an  edge label of $\rT_n$ and $\rT_n^\sym$, respectively, i.e. 
\[
\Delta_n = 
\sup_{uv \in e(T_n)} |\ell_n(u)-\ell_n(v)| \qquad \mbox{and} \qquad
\Delta_n^\sym = \sup_{uv \in e(T_n^\sym)} |\ell_n^\sym(u)-\ell_n^\sym(v)|\, .
\]
For all $i \le k$, the difference between $\ell_n(R_i^\uparrow)$ and the label of $U(\rR^\uparrow,R_i^\uparrow)$ in $(T_n(\rR^\uparrow),\rD\angles{\rR^\uparrow})$ is at most $(k-1) \Delta_n$, since any difference between these labels is caused exclusively by the zeroing of labels in $\rD\angles{\rR^\uparrow}$ at branchpoints, and there are at most $k-1$ branchpoints of $T_n(\rR^\uparrow)$ along any path from the root. Likewise, the difference between 
$\ell_n^\sym(Q_i^\uparrow)$ and the label of $U(\rQ^\uparrow,Q_i^\uparrow)$ in $(T_n^\sym(\rQ^\uparrow),\rD\angles{\rR^\uparrow})$ is at most $(k-1) \Delta_n^\sym$. 
It then follows from (\ref{eq:as_qr_identity}) that
\[
|\ell_n^\sym(Q_i^\uparrow)-\ell_n(R_i^\uparrow)|
\le (k-1)(\Delta_n+\Delta_n^\sym)\, .
\]
Also, the value of $Z_n(\unif_i^\uparrow)$ lies between $\ell_n(R_i^\uparrow)$ and the label of one of its neighbours in $\rT_n$, and the 
value $Z_n^\sym(\unif_i^\uparrow)$ lies between $\ell_n^\sym(Q_i^\uparrow)$ and the label of one of its neighbours in $\rT_n^\sym$, so
% Also, the values $Z_n(\unif_i^\uparrow)$ and $\ell_n(R_i^\uparrow)$ are the labels of nodes of $\rT_n$ at distance at most one, and the 
% values $Z_n^\sym(\unif_i^\uparrow)$ and $\ell_n^\sym(Q_i^\uparrow)$ are the labels of nodes of $\rT_n^\sym$ at distance at most one, so 
\[
|Z_n(\unif_i^\uparrow) - \ell_n(R_i^\uparrow)| \le \Delta_n
\mbox{ and } 
|Z_n^\sym(\unif_i^\uparrow) - \ell_n^\sym(Q_i^\uparrow)| 
\le \Delta_n^\sym\, . 
\]
It follows that 
\begin{align}\label{eq:z_diff_delta_bd}
b_n \sup_{i \le k} |Z_n^\sym(\unif_i^\uparrow) - Z_n(\unif_i^\uparrow)| 
& \le k  b_n (\Delta_n+\Delta_n^\sym)
\end{align}

Now notice that, writing $N=2|T_n^\sym|-2$, we may represent $\Delta_n$ and $\Delta_n^\sym$ as 
\[
\Delta_n 
= \sup_{|x-y| \le 1/N} |Z_n(x)-Z_n(y)|
\qquad \mbox{and} \qquad
\Delta_n^\sym 
= \sup_{|x-y| \le 1/N} |Z_n^\sym(x)-Z_n^\sym(y)|\, .
\]
Since $(C,Z)$ is a $C([0,1],\R^2)$-valued process, $Z$ itself is a $C([0,1],\R)$-valued process, so is almost surely uniformly continuous. Since $(a_nC_n^\sym,b_nZ_n^\sym) \convdist (C,Z)$ it follows that $b_n Z_n^\sym \convdist Z$ for the uniform topology on $C([0,1],\R)$. The second of the preceding equalities then implies that $b_n \Delta_n^\sym \convdist 0$. 

For any labelled tree $\rt=(t,\rd)$ and any $\sigma \in P_t$, the multiset of edge labels is the same in $\rt$ and in $\sigma(\rt)$, so in particular the  largest absolute value of an edge label is the same in both trees. It thus follows from the definition of symmetrization that $\Delta_n \eqdist \Delta_n^\sym$.
Since $b_n\Delta_n^\sym \convdist 0$ it follows that $b_n \Delta_n \convdist 0$ as well, and (\ref{eq:z_diff_delta_bd}) then implies that 
\[
b_n \sup_{i \le k} |Z_n^\sym(\unif_i^\uparrow) - Z_n(\unif_i^\uparrow)| 
\convdist 0\, .
\]

Under the coupling which yields (\ref{eq:as_qr_identity}), we also have 
\[
(C_n(\unif_i^\uparrow),i \le k)=(C_n^\sym(\unif_i^\uparrow),i \le k)\, ,
\]
from which it follows that 
$(a_n C_n(\unif_i^\uparrow),b_n Z_n(\unif_i^\uparrow))$
and 
$(a_n C_n^\sym(\unif_i^\uparrow),b_n Z_n^\sym(\unif_i^\uparrow))$
must have the same distributional limit. By the assumption of Theorem~\ref{thm:main}, 
the latter converges to $(C(\unif_i^\uparrow),Z(\unif_i^\uparrow),i \le k)$. 
\end{proof}

\begin{lem}\label{lem:boundLabels}
Let $\rT=(T,\rD)$ be a random labeled tree and let 
$\rT^\sym$ be its symmetrization. 
Then for any constants $k,K,M$, we have 
\begin{align*}
& \p{\sup_{|i-j| \le k} |\ell_\rT(\theta_T(i)) -\ell_\rT(\theta_T(j))| > M}\\
\le & 
\p{\sup_{|i-j| \le k} \dist(\theta_T(i),\theta_T(j)) > K}
 + 
\p{
\sup_{v,w \in \rT^\sym: \dist(v,w) \le K} 
|\ell_{\rT^\sym}(v) - \ell_{\rT^\sym}(w)| > M}\, .
\end{align*}

\end{lem}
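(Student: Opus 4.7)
The plan is to introduce the auxiliary random variable
\[
X(\rT) := \sup\{|\ell_\rT(v) - \ell_\rT(w)| : v,w \in V(T),\ \dist_T(v,w) \le K\}
\]
and then proceed by a union bound. The core observation is that $X$ is invariant under the symmetrization action, so $X(\rT)$ and $X(\rT^\sym)$ have the same distribution; combined with a deterministic bound on the event that contour-close pairs are graph-close, this immediately yields the lemma.

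\textbf{Invariance step.} For any $\sigma \in \Permvecs_t$, the map $v \mapsto \sigma(v)$ is a graph isomorphism from $t$ to $\sigma(t)$, and by the definition of the action on labeled trees the displacements travel with their edges, so the root-to-$\sigma(v)$ path in $\sigma(\rt)$ accumulates the same signed sum as the root-to-$v$ path in $\rt$. Hence $\ell_{\sigma(\rt)}(\sigma(v)) = \ell_\rt(v)$ and $\dist_{\sigma(t)}(\sigma(v),\sigma(w)) = \dist_t(v,w)$ for all $v,w$, from which $X(\sigma(\rt)) = X(\rt)$ identically. Using the natural coupling $\rT^\sym = \sigma(\rT)$ with $\sigma \in_u \Permvecs_T$, this yields $X(\rT) \aseq X(\rT^\sym)$, and in particular $X(\rT) \eqdist X(\rT^\sym)$.

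\textbf{Deterministic step and union bound.} On the event $E := \{\sup_{|i-j| \le k} \dist(\theta_T(i),\theta_T(j)) \le K\}$, every pair of vertices $(\theta_T(i),\theta_T(j))$ with $|i-j| \le k$ is admissible in the supremum defining $X(\rT)$, so $\sup_{|i-j| \le k} |\ell_\rT(\theta_T(i)) - \ell_\rT(\theta_T(j))| \le X(\rT)$ on $E$. Splitting on $E$ and $E^c$ and applying a union bound,
\[
\p{\sup_{|i-j| \le k} |\ell_\rT(\theta_T(i)) - \ell_\rT(\theta_T(j))| > M} \le \p{E^c} + \p{X(\rT) > M},
\]
and replacing $\p{X(\rT) > M}$ by $\p{X(\rT^\sym) > M}$ via the invariance step gives exactly the stated inequality.

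There is no genuine obstacle: the lemma reduces to the observation that $X$ depends on $\rT$ only through the equivalence class $[\rT]$, because the symmetrization action is by label-preserving graph isomorphisms. The content is strategic rather than technical --- this is precisely what will let one transfer modulus-of-continuity control from $\rT^\sym$ (for which the convergence assumption is available) back to $\rT$, at the cost of separately controlling the maximal contour-versus-graph-distance fluctuation through the first term on the right-hand side.
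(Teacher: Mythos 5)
Your proposal is correct and follows essentially the same route as the paper: both rest on the observation that the symmetrization map $v \mapsto \sigma(v)$ is a label- and distance-preserving isomorphism, so the supremum of label differences over graph-close pairs is invariant, and both then split according to whether some contour-close pair is graph-far. Packaging the argument through the auxiliary variable $X(\rT)$ is a minor stylistic difference, not a different method.
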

\begin{proof}
For vertices $v,w$ of a rooted plane tree, write $\bbr{v,w}$ 
\nomenclature[bracketvw]{$\bbr{v,w}$}{The unique path between vertices $v$ and $w$ belonging to some plane tree.}
for the unique path between $v$ and $w$; this path is determined by the Ulam-Harris  labels of $v$ and $w$ themselves, so there is no need to indicate the tree to which $v$ and $w$ belong in the notation.
Now fix a labeled tree $\rt=(t,\rd)$ and let $\sigma \in \Permvecs_\rt$. Note that for any $v,w \in V(t)$, the paths 
$\bbr{v,w}$ and $\bbr{\sigma(v),\sigma(w)}$ are identical, in that they have the same length, and visit edges with the same labels, in the same order. 
It follows that for all $i,j \le 2|V(t)|-2$, 
\begin{align*}
\dist(\theta_t(i),\theta_t(j)) & = \dist(\sigma(\theta_t(i)),\sigma(\theta_t(j))) \mbox{ and } \\
\ell_\rt(\theta_t(i)) & = \ell_{\sigma(\rt)}(\sigma(\theta_t(i))) \, .
\end{align*}
The second identity implies that for any $k$, 
\[
\sup_{|i-j| \le k} |\ell_\rt(\theta_t(i)) -\ell_\rt(\theta_t(j))|
=
\sup_{|i-j| \le k} |\ell_{\sigma(\rt)}(\sigma(\theta_t(i))) -
\ell_{\sigma(\rt)}(\sigma(\theta_t(j)))|. 
\]
The first identity implies that 
\[
\sup_{|i-j| \le k} \dist(\theta_t(i),\theta_t(j))
=
\sup_{|i-j| \le k} \dist(\sigma(\theta_t(i)),\sigma(\theta_t(j)))\, .
\]
For any constants $M,K$, it follows that if 
\[
\sup_{|i-j| \le k} |\ell_\rt(\theta_t(i)) -\ell_\rt(\theta_t(j))| > M
\]
then either 
\[
\sup_{|i-j| \le k} \dist(\theta_t(i),\theta_t(j)) > K
\]
or 
\[
\sup_{v,w \in V(\sigma(t)): \dist(v,w) \le K} 
|\ell_{\sigma(\rt)}(v) - \ell_{\sigma(\rt)}(w)| > M\, .
\]
We now apply this to the random tree $\rT=(T,\rD)$, 
and to $\sigma \in_u \Permvecs_{\rT}$. By a union bound, this gives 
\begin{align*}
& \p{\sup_{|i-j| \le k} |\ell_\rT(\theta_T(i)) -\ell_\rT(\theta_T(j))| > M}\\
\le & 
\p{\sup_{|i-j| \le k} \dist(\theta_T(i),\theta_T(j)) > K}
 + 
\p{
\sup_{v,w \in V(\sigma(T)): \dist(v,w) \le K} 
|\ell_{\sigma(\rT)}(v) - \ell_{\sigma(\rT)}(w)| > M}\, ,
\end{align*}
which concludes the proof of the lemma, since $\sigma(\rT) \eqdist \rT^\sym$.
% We now apply this to the random tree $\rT=(T,\rD)$, 
% with $\sigma \in_u \Permvecs_{\rT}$. Since $\sigma(\rT) \eqdist \rT^\sym$, the result follows. 
\end{proof}
\begin{lem}\label{lem:contourTight}
Under the hypotheses of Theorem~\ref{thm:main}, for all $\beta>0$, there exists $\alpha = \alpha(\beta)>0$ such that 
\begin{equation*}
\limsup_n\p{\sup_{|i-j| \le \lfloor \alpha |T_n| \rfloor} a_n \dist\big(\theta_{n}(i),\theta_{n}(j)\big)> \beta}	<\beta.	
\end{equation*}	
\end{lem}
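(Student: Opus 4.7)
The plan is to reduce the claim to a tightness statement for the rescaled contour process $a_n C_n$, and then to exploit the fact that the contour process of a tree sampled from a valid law is distributed identically to the contour process of its symmetrization. The crucial observation is that, unlike the label process, the contour process depends only on the plane-tree structure, which is already symmetric by property (i) of a valid law.

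First, I would note that property (i) in the definition of a valid law says the distribution of the underlying unlabeled plane tree $T_n$ is symmetric, so it coincides with the distribution of the unlabeled tree underlying $T_n^\sym$. Since the contour process is a deterministic functional of the plane-tree structure, this gives $C_n \eqdist C_{T_n^\sym}$ as random elements of $C([0,1],\R)$. Combined with the hypothesis of Theorem~\ref{thm:main}, this upgrades to $a_n C_n \convdist C$ for the uniform topology.

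Second, I would invoke the identity $\dist(\theta_n(i),\theta_n(j)) = \Dist_{C_n}(i/N,j/N)$ with $N=2|T_n|-2$ recalled in Section~\ref{sub:treelike}. A one-line computation from the definition of $\Dist_{C_n}$ shows $\Dist_{C_n}(x,y) \le 2\,\omega_{C_n}(|y-x|)$, where $\omega_f(\delta) = \sup_{|u-v|\le\delta}|f(u)-f(v)|$ denotes the modulus of continuity. For $n$ large enough that $\lfloor \alpha|T_n|\rfloor/N \le \alpha$, this yields
\[
\sup_{|i-j|\le \lfloor \alpha|T_n|\rfloor} a_n \dist(\theta_n(i),\theta_n(j)) \le 2\,\omega_{a_n C_n}(\alpha).
\]

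Finally, the distributional convergence $a_n C_n \convdist C$ in $C([0,1],\R)$ implies tightness of the sequence $(a_n C_n)_{n\ge 1}$, so by the Arzela-Ascoli characterization of tightness in $C([0,1],\R)$, for any $\beta>0$ one can choose $\alpha = \alpha(\beta) > 0$ such that $\limsup_n \p{\omega_{a_n C_n}(\alpha) > \beta/2} < \beta$, which together with the displayed inequality proves the lemma. I do not anticipate any real obstacle: the content of the argument lies in the first step, and the rest is a routine application of standard tightness criteria; the only minor bookkeeping concerns the factor of $N=2|T_n|-2$ versus $|T_n|$, which causes no issue since $\omega_f$ is monotone in its argument.
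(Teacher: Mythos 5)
Your proposal is correct and follows essentially the same route as the paper: symmetry of the unlabeled tree law gives $C_{T_n}\eqdist C_{T_n^\sym}$, hence tightness of $(a_nC_n)$, and the identity $\dist(\theta_n(i),\theta_n(j))=\Dist_{C_n}(i/N,j/N)$ together with the modulus-of-continuity bound (the paper's ``triangle inequality'' step) finishes the argument.
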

\begin{proof}
Fix $\beta>0$.
Since $\mu_n$ is symmetric, $T_n$ and $T^\sym_n$ have the same distribution (as unlabeled plane trees). Thus, the convergence result for the contour of $(T^\sym_n)$ translates immediately into the same result for the contour of $(T_n)$. This implies in particular that the process $(a_nC_n)$ is tight, so there exists $\alpha>0$ such that
\begin{equation}\label{eq:contourtight}
\limsup_n\p{\sup_{|x-y| \le \alpha } a_n |C_n(x)-C_n(y)|> \beta}	<\beta.
\end{equation}		
Now, for any $n$ and $0\le i\le j \le 2|T_n|-2$, observe that
\[
\dist\big(\theta_n(i),\theta_n(j)\big) = C_n\big(\frac{i}{2|T_n|-2}\big)+C_n\big(\frac{j}{2|T_n|-2}\big)-2 \inf_{i\le k\le j}C_n\big(\frac{k}{2|T_n|-2}\big),
\]
which together with \eqref{eq:contourtight} and the triangle inequality gives the desired result.
\end{proof}
\begin{lem}\label{lem:labelTight}
Under hypotheses of Theorem~\ref{thm:main}, for all $\epsilon>0$, there exists $\beta=\beta(\epsilon)>0$ such that 
\begin{equation*}
\sup_n\p{\sup_{v,w \in T_n^\sym:\dist(v,w) \le \beta/a_n} b_n 
|\ell^\sym_n(v) -\ell^\sym_n(w)|>\epsilon}	<\epsilon
\end{equation*}	

\end{lem}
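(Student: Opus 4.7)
The plan is to dominate the label oscillation modulus of the lemma by a genuinely continuous functional of $(a_nC_{\rT_n^\sym},b_nZ_{\rT_n^\sym})$, and then transfer the bound to the limiting tree-like path $(C,Z)$ via the continuous mapping and portmanteau theorems.

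First, I would reformulate. Setting $N_n=2|T_n^\sym|-2$, the identity $\dist(\theta_{T_n^\sym}(i),\theta_{T_n^\sym}(j))=\Dist_{C_{\rT_n^\sym}}(i/N_n,j/N_n)$ recalled in Section~\ref{sub:treelike}, together with surjectivity of the contour exploration onto $V(T_n^\sym)$, shows that the supremum appearing in the lemma is dominated by
\[
\omega_n(\beta):=\sup\bigl\{b_n|Z_{\rT_n^\sym}(s)-Z_{\rT_n^\sym}(t)|:s,t\in[0,1],\ a_n\Dist_{C_{\rT_n^\sym}}(s,t)\le\beta\bigr\}.
\]
Next, fix a continuous cutoff $\phi:\R_+\to[0,1]$ with $\phi\equiv 1$ on $[0,1]$ and $\phi\equiv 0$ on $[2,\infty)$, and define
\[
F_\beta(f,g):=\sup_{s,t\in[0,1]}|g(s)-g(t)|\,\phi\bigl(\Dist_f(s,t)/\beta\bigr).
\]
The key technical point is that $F_\beta$ is continuous on $C([0,1],\R_+)\times C([0,1],\R)$ for the uniform topology: $f\mapsto\Dist_f$ is continuous because running infima commute with uniform limits, the integrand is jointly continuous in all variables, and the supremum is taken over the compact set $[0,1]^2$. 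Since $\omega_n(\beta)\le F_\beta(a_nC_{\rT_n^\sym},b_nZ_{\rT_n^\sym})$, the continuous mapping theorem gives $F_\beta(a_nC_{\rT_n^\sym},b_nZ_{\rT_n^\sym})\convdist F_\beta(C,Z)$, and portmanteau yields
\[
\limsup_n\p{\omega_n(\beta)\ge\epsilon}\le\p{F_\beta(C,Z)\ge\epsilon}\le\p{\omega_Z(2\beta)\ge\epsilon},
\]
where $\omega_Z(\delta):=\sup\{|Z(s)-Z(t)|:\Dist_C(s,t)\le\delta\}$.

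Finally, I would invoke the observation from the end of Section~\ref{sub:treelike} that $(C,Z)$ is a tree-like path: $Z$ descends to a continuous function on the compact quotient metric space $(\cT_C,\Dist_C)$, so uniform continuity gives $\omega_Z(\delta)\to 0$ almost surely as $\delta\to 0$, and hence $\p{\omega_Z(2\beta)\ge\epsilon}\to 0$ as $\beta\to 0$. Given $\epsilon>0$, choose $\beta$ small enough that this limit probability is below $\epsilon/2$, then pick $n_0$ with $\p{\omega_n(\beta)\ge\epsilon}<\epsilon$ for all $n\ge n_0$, and finally shrink $\beta$ once more to handle the remaining finitely many indices: for each fixed $n$, compactness of $[0,1]^2$ together with the tree-like identity $Z_{\rT_n^\sym}(s)=Z_{\rT_n^\sym}(t)$ whenever $\Dist_{C_{\rT_n^\sym}}(s,t)=0$ gives $\omega_n(\beta)\to 0$ almost surely as $\beta\to 0$. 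The main subtlety is verifying genuine continuity of $F_\beta$ in the uniform topology, which is precisely why the smooth cutoff $\phi$ is used in place of the sharp indicator $\mathbf{1}_{[0,1]}$.
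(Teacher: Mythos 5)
Your proof is correct and follows essentially the same route as the paper: deduce from the tree-like path property and compactness of $\cT_C$ that $Z$ is uniformly continuous for $\Dist_C$, transfer the resulting modulus-of-continuity bound to $(a_nC_{\rT_n^\sym},b_nZ_{\rT_n^\sym})$ via the assumed weak convergence, and then restrict to vertices of $T_n^\sym$. The only difference is that you spell out, via the cutoff functional $F_\beta$ and the portmanteau theorem (plus a further shrinking of $\beta$ for the finitely many remaining indices), the transfer step that the paper compresses into ``after decreasing $\beta$ if necessary.''
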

\begin{proof}
As noted in the introduction, 
if $(\zeta,f)$ is a tree-like path then $f$ can be pushed forward to $\cT_\zeta$ and is continuous on that domain. Since $\cT_\zeta$ is compact, $f$ is in fact uniformly continuous on $\cT_\zeta$. 
In the current setting, this implies that the push-forward of $Z$ to $\cT_C$ is a.s.\ uniformly continuous on $\cT_C$ with respect to (the pushforward of) $\Dist_C$. Thus, for all $\epsilon>0$, there exists $\beta>0$ such that 
\begin{equation}\label{eq:snakeContinuous}
\p{\sup_{x,y \in [0,1]:\Dist_C(x,y) \le \beta} |Z(x)-Z(y)| \ge \eps} < \eps\, .
\end{equation}
Since $(a_nC_n^\sym,b_nZ_n^\sym) \convdist (C,Z)$ by assumption, after decreasing $\beta$ if necessary, \eqref{eq:snakeContinuous} implies that 
\[ 
\sup_n\p{\sup_{x,y: \Dist_n^\sym(x,y)\le \beta/a_n}\{b_n|Z_n^\sym(x)-Z_n^\sym(y)|\}>\eps}<\eps.
\]
Since $T_n^\sym$ is isometric to a subspace of $\cT_{C_n^\sym}$, we have 
\[
\sup_{v,w \in T_n^\sym:\dist(v,w) \le \beta/a_n} b_n 
|\ell^\sym_n(v) -\ell^\sym_n(w)|
\le \sup_{x,y: \Dist_n^\sym(x,y)\le \beta/a_n}\{b_n|Z_n^\sym(x)-Z_n^\sym(y)|\}, 
\]
and the result follows. 
\end{proof}

\begin{prop}\label{prop:tightness}
Under the hypotheses of Theorem~\ref{thm:main}, the family $(b_nZ_n)$ is tight.
\end{prop}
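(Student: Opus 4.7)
The plan is to verify the standard tightness criterion in $C([0,1],\R)$. Since $Z_n(0)=0$ for all $n$, it suffices to control the modulus of continuity $\omega(b_nZ_n,\delta):=\sup_{|s-t|\le \delta} b_n|Z_n(s)-Z_n(t)|$; that is, for every $\eps>0$ and $\eps'>0$, I need to exhibit $\delta>0$ with $\limsup_n \p{\omega(b_nZ_n,\delta)>\eps'}<\eps$. The three preceding lemmas combine to give exactly this: Lemma~\ref{lem:labelTight} controls label increments across nearby vertices in the \emph{symmetrized} tree, Lemma~\ref{lem:contourTight} converts closeness along the contour into tree-distance closeness, and Lemma~\ref{lem:boundLabels} transfers symmetric label-modulus bounds back to the unsymmetrized process. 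Since $Z_n$ is the piecewise linear interpolation of $i\mapsto \ell_n(\theta_n(i))$, a contour-index modulus bound converts mechanically into a modulus bound for $Z_n$.

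\textbf{Chaining the three lemmas.} Fix $\eps,\eps'>0$ and set $\gamma:=\min(\eps',\eps/2)$. Apply Lemma~\ref{lem:labelTight} with parameter $\gamma$ to produce $\beta>0$ with
\[
\sup_n\p{\sup_{v,w\in T_n^\sym:\dist(v,w)\le \beta/a_n} b_n|\ell^\sym_n(v)-\ell^\sym_n(w)|>\gamma}<\gamma\le \eps/2.
\]
Now set $\beta':=\min(\beta,\eps/2)$ and apply Lemma~\ref{lem:contourTight} to $\beta'$ to obtain $\alpha>0$ with
\[
\limsup_n\p{\sup_{|i-j|\le\lfloor \alpha|T_n|\rfloor} a_n\dist(\theta_n(i),\theta_n(j))>\beta'}<\beta'\le \eps/2.
\]
Applying Lemma~\ref{lem:boundLabels} to $\rT_n$ with $k=\lfloor\alpha|T_n|\rfloor$, $K=\beta'/a_n$, $M=\gamma/b_n$, and using the monotonicity of the symmetric label-oscillation in $K$ (since $\beta'\le\beta$), I obtain
\begin{align*}
& \p{\sup_{|i-j|\le \lfloor\alpha|T_n|\rfloor} b_n|\ell_n(\theta_n(i))-\ell_n(\theta_n(j))|>\gamma}\\
& \le \p{\sup_{|i-j|\le\lfloor\alpha|T_n|\rfloor} a_n\dist(\theta_n(i),\theta_n(j))>\beta'} + \p{\sup_{v,w:\dist(v,w)\le \beta/a_n} b_n|\ell^\sym_n(v)-\ell^\sym_n(w)|>\gamma},
\end{align*}
whose right-hand side has $\limsup$ at most $\eps/2+\eps/2=\eps$.

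\textbf{From contour-index to uniform modulus, and conclusion.} Choose $\delta\in(0,\alpha/3)$ so that for all $n$ large, $\lceil \delta(2|T_n|-2)\rceil+1\le \lfloor\alpha|T_n|\rfloor$. Because $Z_n$ interpolates linearly between the values $\ell_n(\theta_n(i))$ on the grid of spacing $1/(2|T_n|-2)$, any two points $s,t\in[0,1]$ with $|s-t|\le\delta$ satisfy
\[
b_n|Z_n(s)-Z_n(t)|\le \sup_{|i-j|\le\lfloor\alpha|T_n|\rfloor} b_n|\ell_n(\theta_n(i))-\ell_n(\theta_n(j))|.
\]
Combining with the displayed bound above (and using $\gamma\le\eps'$) gives $\limsup_n \p{\omega(b_nZ_n,\delta)>\eps'}\le\eps$, which together with $b_nZ_n(0)=0$ establishes tightness by the standard criterion for $C([0,1],\R)$-valued random elements.

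\textbf{Main obstacle.} Essentially all of the probabilistic content has been extracted in Lemmas~\ref{lem:contourTight}, \ref{lem:labelTight}, and \ref{lem:boundLabels}; the only care needed here is in matching the parameters $\alpha,\beta,\gamma,\delta$ so that the union-bound inequality of Lemma~\ref{lem:boundLabels} yields a \emph{uniform} (in $n$) control at the relevant scale and so that the contour-index width $\lfloor\alpha|T_n|\rfloor$ dominates $\delta(2|T_n|-2)$. No estimate on the unsymmetrized displacements is used directly; the symmetrization-invariance of edge label multisets (already exploited in Lemma~\ref{lem:boundLabels}) is what makes the transfer from the symmetric limit to the original process possible.
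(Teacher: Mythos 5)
Your proof is correct and follows essentially the same route as the paper's: chain Lemma~\ref{lem:labelTight} (choice of $\beta$), Lemma~\ref{lem:contourTight} (choice of $\alpha$), and Lemma~\ref{lem:boundLabels} (union bound with $k\asymp\alpha|T_n|$, $K=\beta/a_n$, $M\asymp 1/b_n$) to control the label oscillation along the contour, then pass from the lattice values $\ell_n(\theta_n(i))$ to the interpolated process $Z_n$. The only differences are cosmetic bookkeeping of the constants $\gamma,\beta',\delta$.
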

\begin{proof}
Fix $\epsilon>0$, let $\beta<\eps$ be such that the bound of Lemma~\ref{lem:labelTight} holds and let $\alpha = \alpha(\beta)$ be such that the bound of Lemma~\ref{lem:contourTight} holds. 
Let $\rt=\rT_n$ and $N=2|\rT_n|-2$; we assume 
$n$ is large enough that $\alpha N \ge 2$. Applying  Lemma~\ref{lem:boundLabels} with $k=\alpha N$, $K= \beta/a_n$ and $M=\epsilon/b_n$, we get 
\begin{align*}
& \p{\sup_{|i-j| \le \alpha N} |\ell_n(\theta_n(i)) -\ell_n(\theta_n(j))| > \epsilon/b_n}\\
\le & 
\p{\sup_{|i-j| \le\alpha N } \dist(\theta_n(i),\theta_n(j)) > \beta/a_n}
 + 
\p{
\sup_{\dist(v,w) \le \beta/a_n} 
|\ell_n^\sym(v) - \ell_n^\sym(w)| > \epsilon/b_n}\, .
\end{align*}
By Lemmas~\ref{lem:contourTight} and~\ref{lem:labelTight}, it follows that
\begin{equation}\label{eq:lnTight}
\limsup_n \p{\sup_{|i-j| \le \alpha N} |\ell_n(\theta_n(i)) -\ell_n(\theta_n(j))| > \epsilon/b_n} \le \beta + \epsilon \le 2\epsilon\, .
\end{equation}
For any $x,y \in [0,1]$, by the triangle inequality, we have 
\begin{multline*}
|Z_n(x)-Z_n(y)| \le |\ell_n(\theta_n(\lfloor xN\rfloor))-\ell_n(\theta_n(\lfloor yN\rfloor))|\\+|Z_n(x)-\ell_n(\theta_n(\lfloor xN\rfloor))|+|Z_n(y)-\ell_n(\theta_n(\lfloor yN\rfloor))|.
\end{multline*}
By definition of $Z_n$ this yields 
\begin{multline*}
|Z_n(x)-Z_n(y)| \le |\ell_n(\theta_n(\lfloor xN\rfloor))-\ell_n(\theta_n(\lfloor yN\rfloor))|\\+|\ell_n(\theta_n(\lceil xN\rceil))-\ell_n(\theta_n(\lfloor xN\rfloor))|+|\ell_n(\theta_n(\lceil yN\rceil))-\ell_n(\theta_n(\lfloor yN\rfloor))|.
\end{multline*}
Since $\alpha N \ge 2$, if $|x-y| \le \alpha/2$ 
then $|\lfloor x N \rfloor - \lfloor y N \rfloor| \le \alpha N$, so 
\[
\sup_{|x-y|\le \alpha/2}|Z_n(x)-Z_n(y)| \le 3 \sup_{|i-j| \le \lfloor \alpha N\rfloor} |\ell_n(\theta_n(i)) -\ell_n(\theta_n(j))|\, . 
\]
Together with Equation~\eqref{eq:lnTight}, this establishes the requisite tightness.
\end{proof}

\begin{proof}[Proof of Theorem~\ref{thm:main}]
For $n \ge 1$ write $\cL_n$ for the law of $(a_n C_n,b_nZ_n)$, so $\cL_n$ is a Borel probability measure on $C([0,1],\R)^2$. Proposition~\ref{prop:tightness} and (\ref{eq:contourtight}) together imply that the family $(\cL_n,n \ge 1)$ is tight.
To complete the proof, we establish convergence of 
finite-dimensional distributions by showing that for any $0 \le t_1 < t_2 < \ldots < t_m \le 1$ and any bounded Lipschitz function $F : \big(\mathbb{R}^2\big)^m \rightarrow \mathbb{R}$, 
\begin{equation}\label{eq:toprove}
\E{F\Big(\big(a_nC_n(t_i),b_nZ_n(t_i)\big)_{1\le i \le m}\Big)} \to \E{F\Big(\big(\limC(t_i),\limZ(t_i)\big)_{1\le i \le m}\Big)}
\end{equation}
For the remainder of the proof, fix $F$ and $(t_i, 1 \le i \le m)$ as above, let $\ninf{F}$ be the uniform norm of $F$, and let $\lip{F}$ be the Lipschitz constant of $F$ with respect to this norm. 

By \citep[Theorem 8.2]{billingsley}, since $(\cL_n,n \ge 1)$ is tight, for all $\delta > 0$ there is $\alpha=\alpha(\delta)$ such that 
\begin{equation} \label{eq:fin_tight}
\limsup_{n \to \infty} \p{\sup_{x,y \in [0,1],|x-y| \le \alpha} \left( a_n|C_n(x)-C_n(y)|+b_n|Z_n(x)-Z_n(y)|\right) > \delta} < \delta\, .
\end{equation} 
We write $(A_n,n \ge 1)$ for the events whose probabilities are bounded in (\ref{eq:fin_tight}).

Since $C$ and $Z$ are almost surely uniformly continuous, by decreasing $\alpha(\delta)$ if necessary we may additionally ensure that 
\begin{equation} \label{eq:lim_tight}
\p{\sup_{x,y \in [0,1],|x-y| \le \alpha} \left(|C(x)-C(y)| + |Z(x)-Z(y)|\right) > \delta} < \delta. 
\end{equation}
Let $(U_i,i \ge 1)$ be independent Uniform$[0,1]$ random variables, and for $k \ge 1$, let $(U_{i;k}^\uparrow,1 \le i \le k)$ be the increasing reordering of $U_1,\ldots,U_k$. Then the sequence of random 
variables $(\sup_{0 \le x \le 1} |U_{\lfloor kx\rfloor;k}^\uparrow-x|)_{k \ge 1}$ converges in probability to $0$ as $k \to \infty$. 
%functions $(U_{\lfloor kx\rfloor;k}^\uparrow,0\leq x\leq 1)$ converges in probability to the identity function for the uniform norm on $\cC([0,1],\mathbb{R})$.
For $\delta > 0$, letting $\alpha=\alpha(\delta)$ be as above, we may therefore choose $j> 2/\alpha $ large enough that 
\[
\p{\max_{1 \le i \le j} \left| U_{i;j}^{\uparrow} - \frac{i}{j} \right| \ge \frac{\alpha}{2}} < \delta. 
\]
Now choose integers $k_1,\ldots,k_m$ so that for $1 \le i \le m$, $|k_i/j-t_i| < \alpha/2$; this is possible since $j > 2/\alpha$. It follows that 
\begin{equation}\label{eq:unif_llm}
\p{\max_{1 \le i \le m} |U^\uparrow_{k_i;j}-t_i|  \ge \alpha} < \delta\, .
\end{equation}
% Let $(U_i,i \ge 1)$ be independent Uniform$[0,1]$ random variables. Fix $k \ge 1$ and let $(U_i^\uparrow,1 \le i \le k)$ be the increasing reordering of $U_1,\ldots,U_k$.
% Then fix $\delta > 0$, let $\alpha=\alpha(\delta)$ be as above, and let $j> 2/\alpha $ be large enough that 
% \[
% \p{\max_{1 \le i \le j} \left| U_i^{\uparrow} - \frac{i}{j} \right| \ge \frac{\alpha}{2}} < \delta. 
% \]
% Now choose integers $k_1,\ldots,k_m$ so that for $1 \le i \le m$, $|k_i/j-t_i| < \alpha/2$; this is possible since $j > 2/\alpha$. It follows that 
% \begin{equation}\label{eq:unif_llm}
% \p{\max_{1 \le i \le m} |U^\uparrow_{k_i}-t_i|  \ge \alpha} < \delta\, .
% \end{equation}

Let $B$ and $C$ be the events whose probabilities are bounded in (\ref{eq:lim_tight}) and (\ref{eq:unif_llm}). 
Writing $E_n=(A_n \cup B \cup C)^c$, we then have $\liminf_n \p{E_n} >1-3\delta$. When $E_n$ occurs, 
\begin{equation*}
\bigg|F\Big(\big(a_nC_{\rT_n}(t_i),b_nZ_{\rT_n}(t_i)\big)_{1\le i \le m}\Big)-
 F\Big(\big(a_nC_{\rT_n}(U^\uparrow_{k_i;j}),b_nZ_{\rT_n}(U^\uparrow_{k_i;j})\big)_{1\le i \le m}\Big)\bigg| \leq \delta \lip{F}\, ,
\end{equation*}
so for $n$ sufficiently large, 
\begin{align}\label{eq:cvgCZ}
&\bigg|\E{F\Big(\big(a_nC_n(t_i),b_nZ_n(t_i)\big)_{1\le i \le m}\Big)}-
 \E{F\Big(\big(a_nC_n(U^\uparrow_{k_i;j}),b_nZ_n(U^\uparrow_{k_i;j})\big)_{1\le i \le m}\Big)}\bigg| \nonumber\\ \leq &\delta \lip{F}+ 6\delta \ninf{F}.
\end{align}
On $E_n$, we also have $|C(t_i) - C(U^\uparrow_{k_i;j})|+  |Z(t_i) - Z(U^\uparrow_{k_i;j})|< \delta$, so it likewise follows that 
\begin{equation}\label{eq:cvgeZ}
\bigg|\E{F\Big(\big(\limC(t_i),\limZ(t_i)\big)_{1\le i \le m}\Big)- F\Big(\big(\limC(U^\uparrow_{k_i;j}),\limZ(U^\uparrow_{k_i;j})\big)_{1\le i \le m}\Big)}\bigg|\leq \delta \lip{F}+ 6\delta \ninf{F}.
\end{equation}
Finally, Lemma~\ref{lem:fdd} implies that 
\begin{equation*}
\qquad\E{F\Big(\big(a_nC_n(U^\uparrow_{k_i;j}),b_nZ_n(U^\uparrow_{k_i;j})\big)_{1\le i \le m}\Big)}\\ \to \E{F\Big(\big(\limC(U^\uparrow_{k_i;j}),\limZ(U^\uparrow_{k_i;j})\big)_{1\le i \le m}\Big)} \qquad
\end{equation*}
as $n\rightarrow \infty$. Together with \eqref{eq:cvgCZ} and \eqref{eq:cvgeZ}, this yields \eqref{eq:toprove} and completes the proof. 
\end{proof}

For use in the next section, we note two further straightforward points related to convergence of processes built from trees. 
For a tree $t$ with $|V(t)|=n\ge 1$, and $q\in S$ a fixed type, define $\Lambda^{(q)}_{t}(i/(2n-2))$ as the number of times the contour process visits for the first time a vertex of type $q$ before time $i$. More formally, for $0\leq i \leq 2n-2$, set 
\[
	\Lambda^{(q)}_{t}\Big(\frac{i}{2n-2}\Big)=\big|\{0\leq j <i\,:\,s(\theta(j))=q\}\big|. 
\]
\nomenclature[Lambda]{$\Lambda_\rt^(q)$}{The process that gives the number of nodes of type $q$ explored before time $s$ in the contour process.}
%\commar{*** To be removed *** For a tree $t$ with $|V(t)|=n\ge 1$, and $q\in S$ a fixed type, define $\Lambda^{(q)}_{t}(i/n)$ as the number of times the contour process visits a vertex of type $q$ before time $i$. More formally, for $0\leq i \leq n$, set 
%\[
%	\Lambda^{(q)}_{t}(i/n)=\big|\{0\leq j <i\,:\,s(\theta(j))=q\}\big|. 
%\]
%\nomenclature[Lambda]{$\Lambda_\rt^q$}{The process that gives the number of nodes of type $q$ explored before time $s$ in the contour process.}}
Then extend the domain of definition of $\Lambda^{(q)}_{t}$ to [0,1] by linear interpolation. The first proposition is a consequence of the fact that if $\rT=(T,d)$ is a random labeled tree with valid law $\nu$, then the law of the underlying plane tree $T$ is symmetric; its proof is omitted. 
\begin{prop}\label{symmetrize_labelproc}
For each $n \ge 1$ let $\rT_n=(T_n,\rD_n)$ be a random labeled multitype plane tree whose law $\nu_n$ is valid, and let $\rT_n^\sym$ have law $\nu_n^\sym$. Fix any type $q \in S$. If there exists a $C([0,1],\R)$-valued random process $\Lambda^{\!(q)}$ such that 
\[
|V(T_n^\sym)|^{-1} \Lambda^{\!(q)}_{T_n^{\sym}} \convdist \Lambda^{\!(q)}\, ,
\]
then also 
\[
|V(T_n)|^{-1} \Lambda^{\!(q)}_{T_n} \convdist \Lambda^{\!(q)}\, . 
\]
\end{prop}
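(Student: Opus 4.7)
The plan is to observe that this proposition follows almost immediately from property (i) of validity, combined with the fact that $\Lambda^{(q)}_t$ is a deterministic function of the multitype plane tree $t$ alone (ignoring the displacement labels). So there is not really a genuine obstacle; the main work is making this identification explicit.

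First I would note that by property (i) in the definition of validity, the law of the underlying unlabeled multitype plane tree $T_n$ is symmetric. Examining the definition of symmetrization: when passing from $\rT_n$ to $\rT_n^{\sym}$ via a uniformly random $\sigma \in \Permvecs_{T_n}$, the displacements get permuted along their edges, but forgetting the displacements and retaining only the multitype plane tree structure, this operation is precisely the symmetrization of the law $\mu_n$ of $T_n$. Since $\mu_n$ is symmetric, $\mu_n^{\sym}=\mu_n$, and so $T_n \eqdist T_n^{\sym}$ as multitype plane trees.

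Next I would emphasize that the process $\Lambda^{(q)}_t$ is defined purely in terms of the contour exploration $\theta_t$ and the types $s(\theta_t(j))$; no displacement labels appear in its definition. Therefore $t \mapsto (|V(t)|, \Lambda^{(q)}_t)$ is a measurable function of the multitype plane tree $t$ alone. Combining this with the preceding paragraph,
\[
\bigl(|V(T_n)|,\, \Lambda^{(q)}_{T_n}\bigr)\eqdist \bigl(|V(T_n^{\sym})|,\, \Lambda^{(q)}_{T_n^{\sym}}\bigr)
\]
as random elements of $\N \times C([0,1],\R)$ (after the stated linear interpolation).

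Consequently the rescaled processes $|V(T_n)|^{-1}\Lambda^{(q)}_{T_n}$ and $|V(T_n^{\sym})|^{-1}\Lambda^{(q)}_{T_n^{\sym}}$ are equal in distribution as $C([0,1],\R)$-valued random variables. The hypothesis asserts that the latter converges in distribution to $\Lambda^{(q)}$; since equality in distribution is preserved under limits, the former converges to the same $\Lambda^{(q)}$, which is the required conclusion.
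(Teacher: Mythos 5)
Your proposal is correct and matches the argument the paper intends: the authors omit the proof precisely because, as you observe, validity property (i) makes the law of the underlying multitype plane tree symmetric, so $T_n \eqdist T_n^{\sym}$, and $\Lambda^{(q)}_t$ (together with $|V(t)|$) is a function of the typed plane tree alone, not of the displacements. The conclusion then follows immediately from the hypothesis, exactly as you say.
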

Next, fix a labeled plane tree $\rt=(t,d)$ and list the vertices of $t$ in lexicographic order with respect to their Ulam-Harris labels as $\emptyset=v_0,v_1,\ldots,v_{|t|-1}$, and write $v_{|t|}=v_0$ for convenience. The height process $H_{\rt}$ of $\rt$ is the $C([0,1],\R)$ function defined as follows. 
\nomenclature[ht]{$H_{\rt}$}{The height process of tree $t$.}
For integers $0 \le i \le |t|$ let $H_{\rt}(i/|t|)=\dist_t(\emptyset,v_i)$; then extend to $[0,1]$ by linear interpolation. 
Similarly, define $S_{\rt}:[0,1] \to \R$ by taking $S_{\rt}(i/|t|)=\ell_{\rt}(v_i)$ for $0 \le i \le |t|$ and extending to $[0,1]$ by linear interpolation. 
\begin{prop}\label{prop:height_to_contour}
Let $(\rT_n,n \ge 1)$ be random labelled plane trees. If 
 \[
 (a_nH_{\rT_n},b_nS_{\rT_n}) \convdist (C,Z), 
 \]
for the uniform topology with $a_n \to 0$ and $a_n \cdot |\rT_n| \to \infty$ in probability, then 
 \[
 (a_nC_{\rT_n},b_nZ_{\rT_n}) \convdist (C,Z) 
 \]
 also for the uniform topology. 
\end{prop}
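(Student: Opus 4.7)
The plan is to exhibit an explicit time change relating the contour to the height, show it is uniformly close to the identity under the hypothesis $a_n|\rT_n|\to\infty$, and then transfer uniform convergence from $(a_nH_{\rT_n},b_nS_{\rT_n})$ to $(a_nC_{\rT_n},b_nZ_{\rT_n})$.

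First, write $n=|T_n|$, $N=2n-2$, and list the vertices of $T_n$ in lexicographic order as $v_0,v_1,\ldots,v_{n-1}$. Let $b_i$ be the first time the contour exploration $\theta_{T_n}$ visits $v_i$. Since the contour visits vertices for the first time in lexicographic order, between times $0$ and $b_i$ the walk has made exactly $i$ downward steps (one per newly discovered vertex $v_1,\ldots,v_i$) and $b_i-i$ upward steps, ending at depth $h(v_i)$; this yields the identity $b_i=2i-h(v_i)$. In particular
\[
\left|\frac{b_i}{N}-\frac{i}{n}\right| \le \frac{h(v_i)+1}{n-1}\, .
\]
Define $\phi_n:[0,1]\to [0,1]$ to be the piecewise linear function with $\phi_n(b_i/N)=i/n$ for $0\le i\le n-1$ and $\phi_n(1)=1$. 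By assumption $a_n H_{\rT_n}\convdist C$ for the uniform topology, hence $a_n\sup_v h(v)$ is tight; together with $a_n n\to\infty$ in probability, this gives
\[
\sup_{s\in[0,1]}|\phi_n(s)-s| \xrightarrow{\mathrm{(p)}} 0\, .
\]

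The key observation is that at each $s=b_i/N$, by definition,
\[
C_{\rT_n}(s) = h(v_i) = H_{\rT_n}(\phi_n(s))\qquad\text{and}\qquad Z_{\rT_n}(s)=\ell_{\rT_n}(v_i)=S_{\rT_n}(\phi_n(s)).
\]
Between two consecutive first-visit times $b_i/N$ and $b_{i+1}/N$, the contour descends from $v_i$ to the lowest common ancestor of $v_i$ and $v_{i+1}$ (which has depth $h(v_{i+1})-1$) and back up to $v_{i+1}$, so the oscillation of $C_{\rT_n}$ on that interval is bounded by $|h(v_i)-h(v_{i+1})|+1$. Similarly, the oscillation of $Z_{\rT_n}$ on that interval is bounded by $|\ell_{\rT_n}(v_i)-\ell_{\rT_n}(v_{i+1})|$ plus the maximum absolute label difference along the chain from $v_i$ to the LCA and back to $v_{i+1}$. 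Using the uniform convergence of $(a_nH_{\rT_n},b_nS_{\rT_n})$ to the continuous (hence uniformly continuous) limit $(C,Z)$, both the modulus of continuity $\omega(a_n H_{\rT_n},1/n)$ and $\omega(b_n S_{\rT_n},1/n)$ tend to $0$ in probability, and $a_n\to 0$, $b_n\to 0$; hence
\[
a_n\cdot \max_i \mathrm{osc}(C_{\rT_n};[b_i/N,b_{i+1}/N]) \xrightarrow{\mathrm{(p)}} 0\, ,
\]
and the analogue for $b_n Z_{\rT_n}$.

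Finally, for any $s\in[0,1]$, choose $i$ with $s\in[b_i/N,b_{i+1}/N]$ and split
\[
|a_n C_{\rT_n}(s)-C(s)| \le a_n\,\mathrm{osc}(C_{\rT_n};[b_i/N,b_{i+1}/N]) + |a_n H_{\rT_n}(\phi_n(s))-C(\phi_n(s))| + |C(\phi_n(s))-C(s)|,
\]
using that $\phi_n(s)\in[i/n,(i+1)/n]$ and the interpolation of $H_{\rT_n}$. The first term is controlled by the oscillation estimate, the second by $\|a_n H_{\rT_n}-C\|_\infty\to 0$ in probability, and the third by $\omega(C,\sup_s|\phi_n(s)-s|)\to 0$ in probability. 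Taking suprema gives $\|a_n C_{\rT_n}-C\|_\infty\xrightarrow{\mathrm{(p)}}0$, and the identical argument with $(b_n,Z_{\rT_n},S_{\rT_n},Z)$ in place of $(a_n,C_{\rT_n},H_{\rT_n},C)$ handles the label process; the two statements can be run jointly to obtain the claimed joint convergence. The main subtlety is ensuring that $\phi_n$ is uniformly close to the identity, which is precisely what the hypothesis $a_n|\rT_n|\to\infty$ in probability guarantees; all other steps are standard manipulations with time changes and moduli of continuity.
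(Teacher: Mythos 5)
Your overall strategy is the same as the paper's: the identity $b_i=2i-h(v_i)$ for first-visit times, a time change $\phi_n$ that is uniformly close to the identity because $\mathrm{height}(T_n)/|T_n|\convpr 0$ (which is where $a_n|\rT_n|\to\infty$ enters), and oscillation bounds between consecutive first-visit times. The contour half of the argument is correct and matches the paper essentially line for line.

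There is, however, a genuine gap in the label half. You correctly identify that the oscillation of $Z_{\rT_n}$ on $[b_i/N,b_{i+1}/N]$ is governed by the labels along the chain from $v_i$ up to the lowest common ancestor of $v_i$ and $v_{i+1}$, but you then claim this is controlled by $\omega(b_nS_{\rT_n},1/n)\to 0$. That modulus only controls label differences between \emph{lexicographically adjacent} vertices $v_j,v_{j+1}$. The vertices on the ancestral chain of $v_i$ are not lexicographically close to $v_i$ in general: if $v_i$ is a late child of $w$, then $w=v_j$ with $i-j$ as large as order $|T_n|$, so $|\ell(v_i)-\ell(w)|$ corresponds to $|S_{\rT_n}(i/n)-S_{\rT_n}(j/n)|$ at macroscopically separated times, and nothing in $\omega(b_nS_{\rT_n},1/n)$ bounds it. What is true — and what the paper uses — is that every vertex on the chain is at \emph{tree distance} at most $\alpha_{\rt}=2+\max_i\dist_t(v_i,v_{i+1})$ from $v_i$, so the relevant quantity is
\[
\max\bigl(|\ell_\rt(u)-\ell_\rt(v)|:\dist_t(u,v)\le\alpha_\rt\bigr),
\]
and $a_n\alpha_{\rT_n}\convpr 0$. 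To show that $b_n$ times this maximum tends to $0$ you need the additional input that the limit $(C,Z)$ is a tree-like path, so that $Z$ pushed forward to $\cT_C$ is uniformly continuous for $\Dist_C$; this is exactly the argument of Lemma~\ref{lem:labelTight}, which the paper's proof invokes at this point. Your proof is complete once you replace the appeal to $\omega(b_nS_{\rT_n},1/n)$ by this tree-distance uniform-continuity estimate; as written, the justification of the label oscillation step does not go through.
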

\begin{proof}
We roughly follow the argument from Section~1.6 of \cite{LeGallSurvey}, but must modify it slightly to handle the label process. 
Again fix a labeled plane tree $\rt=(t,d)$ and list the vertices of $t$ in lexicographic order as $v_0,v_1,\ldots,v_{|t|-1}$, writing $v_{|t|}=v_0$ for convenience. For $0 \le i < |t|$ let $j(i)=j_\rt(i) = 2i- \dist_t(\emptyset,v_i)$, 
and set $j(|t|)=2|t|-2$. Also, for $0 \le s \le 1$ let $\varphi_\rt(s) = |t|^{-1}\cdot \sup(i: j_\rt(i) \le  s(2|t|-2))$. 
%\modmar{I added a factor $1/t$ in the def of $\varphi$.}

It is straightforward to verify the following facts.
\begin{itemize}
\item For all $0 \le i < |t|$, $j(i)= \inf(j \ge 0:\theta_\rt(j) = v_i)$.
\item For all $0 \le i < |t|$, for $1 \le k < j(i+1)-j(i)$, the vertex $\theta_\rt(j(i)+k)$ is the parent of $\theta_\rt(j(i)+k-1)$, and if $i < |t|-1$ then $\theta_\rt(j(i+1))$ is the child of $\theta_\rt(j(i+1)-1)$.
In particular, 
\[
\dist_t(\theta_\rt(j(i)),\theta_\rt(j(i)+k))=k\, 
\]
for $1 \le k < j(i+1)-j(i)$. 
\end{itemize}

It follows that if $s \in (0,1)$ and $j(i) \le s(2|t|-2) < j(i+1)$ 
%\modmar{Replace by $j(i) \le s(2|t|-2) < j(i+1)$ ?}
then both $C_\rt(s)$ and $H_\rt(\varphi_\rt(s))$ lie in the interval
\[
[\dist_t(\emptyset,v_{i+1})-1,\dist_t(\emptyset,v_i)+1]\, ,
\]
so
\begin{equation}\label{eq:chbound}
\sup_{0 \le s \le 1} |C_\rt(s)-H_\rt(\varphi_\rt(s))| \le 2+\max_{0 \le i <|t|} \dist_t(v_i,v_{i+1})\, .
\end{equation}
Writing $\alpha_\rt = 2+\max_{0 \le i <|t|} \dist_t(v_i,v_{i+1})$,
% Writing $\alpha_\rt = \max_{0 \le i <|t|} \dist_t(v_i,v_{i+1})$, 
it likewise follows that 
\begin{equation}\label{eq:szbound}
\sup_{0 \le s \le 1} |Z_\rt(s)-S_\rt(\varphi(s))| 
\le \max\left(|\ell_\rt(u)-\ell_\rt(v)|: \dist_t(u,v) \le \alpha_\rt\right)\, .
\end{equation}
% \begin{equation}\label{eq:szbound}
% \sup_{0 \le s \le 1} |Z_\rt(s)-S_\rt(\varphi(s))| 
% \le \max\left(|\ell_\rt(u)-\ell_\rt(v)|: \dist_t(u,v) \le \max_{0 \le i <|t|} \dist_t(v_i,v_{i+1})\right)\, .
% \end{equation}
%\[
%
%\]
Also, from the definition, we have that
\[
	j_\rt(|t|\varphi_\rt(s))\leq s(2|t|-2)\quad \text{and}\quad j_\rt(|t|\varphi_\rt(s)+1)> s(2|t|-2).
\]  
% Also, from the definition we straightforwardly have 
Thus, for $x > 0$, if 
$\varphi_t(s)-s=x/|t|$ then 
\[
j_{\rt}(|t|s+x) \le s(2|t|-2) 
= 2(|t|s+x)-2(x+1)\, ,
\]
and if $\varphi_t(s)-s=-x/|t|$
then 
\[
j_{\rt}(|t|s-x+1)
> s(2|t|-2) 
= 2(|t|s-x+1)+2(x-2)\, , 
\]
from which it follows that 
\begin{equation}\label{eq:phisbound}
(2|t|-2)\cdot \sup_{0 \le s \le 1} \left| \varphi_\rt(s)-s\right| \le 4+\sup_{0 \le i \le |t|} \left|j(i)-2i\right| = 4+\mathrm{height}(t)\, . 
\end{equation}

We now turn to asymptotics. If $a_nH_{\rT_n} \convdist C$ then the fact that $C$ is continuous implies that
\begin{equation}\label{eq:distTo0}
a_n \cdot \max_{0 \le i < n} \dist_{\rT_n}(v_i,v_{i+1}) \to 0
\end{equation}
in probability, and together with the fact that $a_n|\rT_n|  \convpr \infty$ implies that $\mathrm{height}(\rT_n)/|\rT_n| \to 0$ in probability. 
Using the first of these convergence results in (\ref{eq:chbound}) gives that 
\[
a_n\sup_{0 \le s \le 1} |C_{\rT_n}(s)-H_{\rT_n}(\varphi_{\rT_n}(s))| \to 0
\]
in probability; using the second in (\ref{eq:phisbound}) gives that 
\[
\sup_{0 \le s \le 1} \left| \varphi_{\rT_n}(s)-s\right| \to 0
\]
in probability. Finally, using \eqref{eq:distTo0} in (\ref{eq:szbound}), and exploiting the convergence of $b_nS_{T_n}$ to the continuous process $Z$ as in the proof of Lemma~\ref{lem:labelTight}, it follows that 
\[
b_n\sup_{0 \le s \le 1} |Z_{\rT_n}(s)-S_{\rT_n}(\varphi_{\rT_n}(s))| \to 0
\]
in probability. The last three convergence results together imply that $(a_nH_{\rT_n},b_nS_{\rT_n})$ and $(a_nC_{\rT_n},b_nZ_{\rT_n})$ must have the same limit.
% If $a_nH_{\rT_n} \convdist C$ then the fact that $C$ is continuous implies that
% \[
% a_n \cdot \max_{0 \le i < n} \dist_{\rT_n}(v_i,v_{i+1}) \to 0
% \]
% in probability, and together with the fact that $a_n|\rT_n|  \to \infty$ implies that $\mathrm{height}(\rT_n)/n \to 0$ in probability. 
% Using the first of these bounds in (\ref{eq:chbound}) gives that 
% \[
% a_n\sup_{0 \le s \le 1} |C_{\rT_n}(s)-H_{\rT_n}(\varphi(s))| \to 0
% \]
% in probability; using the second in (\ref{eq:phisbound}) gives that 
% \[
% \sup_{0 \le s \le 1} \left| \varphi_{\rT_n}(s)-s\right| \to 0
% \]
% in probability. Finally, using the first bound in (\ref{eq:szbound}), and exploiting the convergence of $b_nS_{T_n}$ to the continuous process $Z$ as in the proof of Lemma~\ref{lem:labelTight}, it follows that 
% \[
% b_n\sup_{0 \le s \le 1} |Z_{\rT_n}(s)-S_{\rT_n}(\varphi(s))| \to 0
% \]
% in probability. The last three convergence results together imply that $(a_nH_{\rT_n},b_nS_{\rT_n})$ and $(a_nC_{\rT_n},b_nZ_{\rT_n})$ must have the same limit.
\end{proof}

\section{Convergence of random non-bipartite Boltmann planar maps}\label{sec:planarMaps}
\subsection{Definitions around Theorem~\ref{th:oddAngulations}}
\subsubsection{Brownian tree, Brownian snake, Brownian map}\label{sub:brownian}
In the rest of this section, we denote $\mathbf{e}=(\mathbf{e}(s), 0\leq s\leq 1)$
a standard Brownian excursion. Recall the construction given in Section~\ref{sub:treelike}. The random tree $(\cT_{\mathbf{e}},\Dist_{\mathbf{e}})$ was introduced in~\cite{AldousCRT2} and is called the \emph{Brownian Continuum Random Tree}. 

Next, conditionally given $\mathbf{e}$, let $Z_{\be}=(Z_{\be}(s), 0\leq s \leq 1)$ be a centred Gaussian process such that $Z(0)=0$ and for $0\leq s \leq t \leq 1$, 
\[
	\text{Cov}(Z_{\be}(s),Z_{\be}(t)) = \inf \{\textbf{e}(u), \text{ for } s\leq u\leq t\}.
\]
We may and shall assume $Z_{\be}$ to be a.s. continuous; see \cite[Section~3]{LeGallSurvey} for a more detailed description of the construction of the pair $(\textbf{e},Z_{\be})$, which is called the \emph{Brownian snake}. 
It can be checked that almost surely, for all $x,y \in [0,1]$, if $x\sim_{\mathbf{e}}y$ then $Z_{\be}(x)=Z_{\be}(y)$. Therefore, a.s the pair $(\textbf{e},Z_{\be})$ is a tree-like path. 

To construct the Brownian map, we further need the following. For $x,y\in [0,1]$, let
\begin{equation}\label{eq:defDrond}
D^\circ(x,y)=D^\circ(y,x)=Z_{\be}(x)+Z_{\be}(y)-2\max\big(\min_{q\in [x,y]} Z_{\be}(q),\min_{q\in[y,1]\cup[0,x]}Z_{\be}(q)\big). \end{equation}
Then there is a unique pseudo-distance $D^* \le D^\circ$ on $[0,1]$ such that 
$D^*(x,y)=0$ whenever $x\sim_{\be}y$ which is {\em maximal}, in that if $D' \le D^\circ$ is any other pseudo-distance on $[0,1]$ satisfying this condition, then $D'(x,y) \le D^*(x,y)$ for all $x,y \in [0,1]$. The function $D^*$ is given explicitly as 
\[
D^*(x,y) = \inf\left\{\sum_{i=1}^k D^\circ(x_i,y_i),\right\}
\]
where the infimum is taken over $k \in \N$ and over sequences $(x_1,y_1),\ldots,(x_k,y_k)$ of elements of $[0,1]^2$ with $x_1=x,y_k=y$, and $D^{\circ}(x_{i},y_{i+1})=0$ for $1 \le i < k$; see \cite{MR1835418}, Section 3.1.2, and also \cite{LeGallInventiones,MiermontBrownian}. 

Then, let $M=[0,1]/\{D^*=0\}$ and let $d^*$ be the push-forward of $D^*$ to $M$. Finally, let $\lambda$ be the push-forward of Lebesgue measure on $[0,1]$ to $M$. We refer to the triple $(M,d^*,\lambda)$, or to any other random variable with the same law as $(M,d^*,\lambda)$, as the \emph{Brownian map}.

\subsubsection{Gromov-Hausdorff and Gromov-Hausdorff-Prokhorov distance}
We give in this section the definition of Gromov-Hausdorff and Gromov-Hausdorff-Prokhorov distances and refer the readers to~\cite{GrevenPfaffelhuberWinter} and \cite[Section~6]{Mier09} for details.

Let $X$ and $X'$ be two compact metric spaces. The \emph{Gromov-Hausdorff distance}\footnote{We define here in fact a pseudo-distance and we should consider instead isometric classes of compact metric spaces to be perfectly rigorous.} $\mathrm{d}_{GH}(X,X')$ between $X$ and $X'$ is defined by 
\[
	\mathrm{d}_{GH}(X,X')=\inf_{\phi,\phi'}\delta_H(\phi(X),\phi(X')),
\]
where the infimum is taken over all isometries $\phi:X\rightarrow Z$ and $\phi':X'\rightarrow Z$ into a common metric space $Z$ and where $\delta_H$ denotes the classical Hausdorff distance between closed subsets of $Z$. 

Let $\mathrm{X}=(X,\mu)$ and $\mathrm{X'}=(X',\mu')$ be two compact measured metric spaces (that is $X$ and $X'$ are two compact metric spaces and $\mu$ and $\mu'$ are Borel probability measures on $X$ and $X'$ respectively). The \emph{Gromov-Hausdorff-Prokhorov distance} $\mathrm{d}_{GHP}(\mathrm{X},\mathrm{X'})$ between $\mathrm{X}$ and $\mathrm{X'}$ is defined by 
\[
	\mathrm{d}_{GHP}(\mathrm X, \mathrm{X}')=\inf_{\phi,\phi'}\Big(\delta_H(\phi(X),\phi(X'))\wedge \delta_P(\phi_*\mu,\phi_*\mu')\Big),
\]
where the infimum is taken over all isometries $\phi:X\rightarrow Z$ and $\phi':X'\rightarrow Z$ into a common metric space $Z$ and where $\delta_P$ denotes the Prokhorov distance between two probability measures and $\phi_*\mu$ and $\phi_*\mu'$ denote the push-forwards of $\mu$ and $\mu'$ by $\phi$ and $\phi'$.

\subsection{Planar maps and Boltzmann distribution}\label{sub:maps}
All maps considered in this  section are \emph{rooted}, meaning that an edge is marked and oriented. This edge is called the root edge and its tail is the \emph{root vertex}. In addition to their rooting, maps can also be \emph{pointed} meaning that an additional vertex is distinguished. A map $M$ rooted at an oriented edge $e$ and pointed at a vertex $v^\sbt$ is denoted $(M,e,v^\sbt)$. The set of rooted maps and of rooted and pointed maps are respectively denoted $\mathcal{M}$ and $\mathcal{M}^\bullet$. For $n\geq 1$, we denote by $\mathcal{M}_n$ and $\mathcal{M}^\bullet_n$ the subsets of $\mathcal{M}$ and $\mathcal{M}^\bullet$ consisting of maps with $n$ vertices, respectively. We assume that $\mathcal{M}$ and $\mathcal{M}^\bullet$ both contain the ``vertex map'' $\dagger$, which consists of a single vertex, no edge, and a single face of degree 0.

Following~\cite{MarckertMiermontInvariance} and \cite{MiermontInvariance}, we introduce the \emph{Boltzmann distribution} on $\mathcal{M}$ defined as follows. Let $\mathbf{q}=(q_1,q_2,q_3,\ldots)$ be a sequence of non-negative real numbers.
For $m\in \mathcal{M}$ or $m\in \cM^\bullet$, we define the weight of $m$ by 
\[
W_{\mathbf{q}}(m) = \prod_{f\in f(m)}q_{\deg(f)}, 
\]
where $f(m)$ is the set of faces of $m$; by convention $W_{\mathbf{q}}(\dagger)=1$. 
\medskip

For $n \ge 1$ let $\mathcal{Z}_{\mathbf{q},n}=\sum_{m\in \mathcal{M}_n}W_{\mathbf{q}}(m)$, let $\mathcal{Z}^\bullet_{\mathbf{q},n} = \sum_{m\in \mathcal{M}^\bullet_n} W_{\mathbf{q}}(m) = n \mathcal{Z}_{\mathbf{q},n}$,
and let
\[
\mathcal{Z}^\bullet_{\mathbf{q}}=\sum_{m^\sbt\in \mathcal{M^\bullet}}W_{\mathbf{q}}(m^\sbt) = \sum_{n \ge 1}\sum_{m^\sbt\in \mathcal{M}_n} nW_{\mathbf{q}}(m^\sbt)\, .
\]
From now until the end of the paper, we assume the following holds. 
\begin{assumption}\label{assum:q}
The sequence $\mathbf{q}$ has finite support, and there exists an odd integer $p\geq 3$ such that $q_p>0$. Moreover, $\mathcal{Z}_{\mathbf{q}}^{\bullet}<\infty$. 
%\modmar{Moreover, we assume that $\mathbf{q}$ is admissible in the sense of \cite[p.41]{MiermontInvariance}.}
\end{assumption}

Under this assumption, we may define probability measures $\mathbb{P}_{\mathbf{q}}$ and $\mathbb{P}^\bullet_{\mathbf{q}}$ on $\cM_n$ and $\cM_n^\bullet$, respectively, by setting, for $m\in \mathcal{M}_n$ and $m^\sbt \in \cM_n^\sbt$, 
\[
	\mathbb{P}_{\mathbf{q},n}(m) = \frac{W_{\mathbf{q}}(m)}{\mathcal{Z}_{\mathbf{q},n}}\quad \text{and}\quad\mathbb{P}^\bullet_{\mathbf{q}}(m^\sbt) = \frac{W_{\mathbf{q}}(m^\sbt)}{\mathcal{Z}^\sbt_{\mathbf{q},n}}.
\]
These definitions only make sense when $\mathcal{Z}_{\mathbf{q},n}\neq 0$, and in what follows, we restrict attention to values of $n$ for which this the case.

Note that, assuming $\mathbf{q}$ has finite support, we may always choose $c > 0$ such that $\cZ(c\mathbf{q})<\infty$, where we write $c\mathbf{q}=(cq_1,cq_2,\ldots)$. 
Moreover, if $\cZ_{\mathbf{q},n}< \infty$ and $\mathcal{Z}_{c\mathbf{q},n}<\infty$, then we have $\mathbb{P}_{\mathbf{q},n}=\mathbb{P}_{c\mathbf{q},n}$ and $\mathbb{P}_{\mathbf{q},n}^\sbt=\mathbb{P}_{c\mathbf{q},n}^\sbt$.
Also, it is proved in the Appendix of \cite{CurienLeGallMiermontCactus} that if $\mathbf{q}$ has finite support and there exists $p\geq 3$ such that $q_p>0$, then there is $c>0$ such that $c\mathbf{q}$ is regular critical. Together these facts imply that, in studying $\mathbb{P}_{\mathbf{q},n}$ and $\mathbb{P}_{\mathbf{q},n}^\sbt$ we may assume $\mathbf{q}$ is itself regular critical; we make this assumption henceforth. 

% For $n \ge 1$ let $\mathcal{Z}_{\mathbf{q},n}=\sum_{m\in \mathcal{M}_n}W_{\mathbf{q}}(m)$ and let $\mathcal{Z}^\bullet_{\mathbf{q},n} = \sum_{m\in \mathcal{M}^\bullet_n} W_{\mathbf{q}}(m) = n \mathcal{Z}_{\mathbf{q},n}$. These quantities are both finite since $\cM_n^\bullet$ is finite. Then define probability measures $\mathbb{P}_{\mathbf{q}}$ and $\mathbb{P}^\bullet_{\mathbf{q}}$ on $\cM_n$ and $\cM_n^\bullet$, respectively, by setting, for $m\in \mathcal{M}_n$ and $m^\sbt \in \cM_n^\sbt$, 
% \[
% 	\mathbb{P}_{\mathbf{q},n}(m) = \frac{W_{\mathbf{q}}(m)}{\mathcal{Z}_{\mathbf{q},n}}\quad \text{and}\quad\mathbb{P}^\bullet_{\mathbf{q}}(m^\sbt) = \frac{W_{\mathbf{q}}(m^\sbt)}{\mathcal{Z}^\sbt_{\mathbf{q},n}}.
% \]

% \[
% \mathcal{Z}^\bullet_{\mathbf{q}}=\sum_{m^\sbt\in \mathcal{M^\bullet}}W_{\mathbf{q}}(m^\sbt) = \sum_{n \ge 1}\sum_{m^\sbt\in \mathcal{M}_n} nW_{\mathbf{q}}(m^\sbt)\, .
% \]

% Note that for any $c > 0$, with $c\mathbf{q}=(cq_1,cq_2,\ldots)$, we have $\mathbb{P}_{\mathbf{q},n}=\mathbb{P}_{c\mathbf{q},n}$ and $\mathbb{P}_{\mathbf{q},n}^\sbt=\mathbb{P}_{c\mathbf{q},n}^\sbt$. As proved in the Appendix of \cite{CurienLeGallMiermontCactus}, under Assumption~\ref{assum:q}, there is $c>0$ such that $c\mathbf{q}$ is regular critical in the sense of \cite[Definition 1]{MiermontInvariance}. Together these facts imply that, in studying $\mathbb{P}_{\mathbf{q},n}$ and $\mathbb{P}_{\mathbf{q},n}^\sbt$ we may assume $\mathbf{q}$ is itself regular critical; we make this assumption henceforth. 

\medskip
The goal of this section is to prove the following result.
\begin{thm}\label{th:convergence}
Let $(\mathbf{M}_n,n \ge 1)$ be random rooted maps with $\mathbf{M}_n$ having law $\mathbb{P}_{\mathbf{q},n}$. Denote by $\dist_{\bM_n}$ the graph distance on $\bM_n$ and by $\mu_n$ the uniform probability distribution on $V(\bM_n)$. Then there exists a constant $b_\mathbf{q}$ such that, as $n$ goes to infinity, 
\[
	\Big(V(\bM_n),\frac{b_{\mathbf{q}}}{n^{1/4}}d_{\bM_n},\mu_n\Big) \convdist (M,d^*,\lambda),
\]
for the Gromov-Hausdorff-Prokhorov topology, where $(M,d^*,\lambda)$ is the Brownian map.
\end{thm}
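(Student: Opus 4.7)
The plan is to reduce Theorem~\ref{th:convergence} to a statement about labeled multitype mobiles, use the technology developed in Sections~\ref{sec:fdd}--\ref{sec:proofMain} to obtain scaling limits of the encoding processes, and then feed this into the Le Gall--Miermont framework to upgrade convergence of encodings into convergence of the maps in Gromov--Hausdorff--Prokhorov topology. Because the measure $\bP_{\mathbf{q},n}^\sbt$ is obtained from $\bP_{\mathbf{q},n}$ by sampling a uniform distinguished vertex, it suffices to prove the analogous statement for the pointed Boltzmann map $\bM_n^\sbt$ equipped with $\mu_n$.

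First I would apply the Bouttier--di Francesco--Guitter bijection (Section~\ref{sub:BDG}) to $\bM_n^\sbt$, producing a labeled multitype plane tree $\rT_n=(T_n,\rD_n)$ whose labels record differences of distances to $v^\sbt$ along edges of the mobile. Under Assumption~\ref{assum:q}, the unlabeled tree $T_n$ is a multitype Galton--Watson tree conditioned on having a prescribed number of vertices of a fixed type (corresponding to the $n$ vertices of the map), and Proposition~\ref{prop:GWSym} together with the discussion in Section~\ref{sub:validAndGW} shows that the joint law $\nu_n$ of $\rT_n$ is valid. The existence of an odd $p$ with $q_p>0$ is precisely what prevents the family of displacement laws from being locally centered; however, a direct computation using the explicit displacement rules of the BDG bijection shows that it is \emph{centered} in the sense of Section~\ref{sub:centering}. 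By the claim following that definition, the symmetrized tree $\rT_n^\sym$ has locally centered displacements.

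Second I would invoke Miermont's invariance principle for labeled multitype Galton--Watson trees with locally centered displacements \cite{MiermontInvarianceTrees} applied to $\rT_n^\sym$. This yields the existence of constants $a_n\sim c_1/n^{1/2}$ and $b_n\sim c_2/n^{1/4}$ and a type-dependent local-time limit $\Lambda^{(q)}$ such that the height, label and type-counting processes of $\rT_n^\sym$ converge jointly to $(\mathbf{e},Z_{\be},\Lambda^{(q)})$ in the uniform topology; Proposition~\ref{prop:height_to_contour} upgrades this to convergence of the contour and label processes. Applying Theorem~\ref{thm:main} and Proposition~\ref{symmetrize_labelproc} then transfers these convergences from $\rT_n^\sym$ to $\rT_n$; this is exactly the step that the bootstrapping machinery of the paper was designed to enable, and it is the central obstacle, since non-bipartiteness makes Miermont's theorem inapplicable to $\rT_n$ directly.

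Third, once $(a_nC_{\rT_n},b_nZ_{\rT_n})\convdist(\mathbf{e},Z_{\be})$ is in hand, I would follow the well-established Le Gall--Miermont strategy~\cite{LeGallUniqueness,MiermontBrownian} to conclude. The BDG bijection expresses $\dist_{\bM_n}(v^\sbt,v)$ as the label of $v$ plus a deterministic shift, and more generally bounds $\dist_{\bM_n}(u,v)$ above by a cactus-type expression involving the label process and below by distances in the mobile. Using Lemmas~\ref{lem:contourTight} and~\ref{lem:labelTight} together with standard re-rooting arguments, the family $(V(\bM_n),b_\mathbf{q} n^{-1/4}\dist_{\bM_n},\mu_n)$ is tight for $\mathrm{d}_{\mathrm{GHP}}$; any subsequential limit is a quotient of $[0,1]$ by a pseudo-metric $D$ satisfying $D\le D^\circ$ (with $D^\circ$ defined from $Z_{\be}$ by \eqref{eq:defDrond}) and $D(x,y)=0$ whenever $x\sim_{\be} y$. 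Le Gall's uniqueness theorem \cite{LeGallUniqueness} identifies any such $D$ with $D^\star$, so the unique subsequential limit is the Brownian map $(M,d^\star,\lambda)$, with the constant $b_\mathbf{q}$ read off from the scaling appearing in Miermont's invariance principle. The main difficulty of the whole argument lies in the second paragraph, and more specifically in the fact that the invariance principle for $\rT_n$ itself is unavailable without the symmetrization trick of Theorem~\ref{thm:main}.
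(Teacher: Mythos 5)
Your first three paragraphs follow the paper's route exactly: reduction to pointed maps, the Bouttier--Di Francesco--Guitter bijection, validity and centering of the displacement laws, Miermont's invariance principle applied to the symmetrized mobiles, and the transfer back to $\rT_n$ via Theorem~\ref{thm:main}, Proposition~\ref{prop:height_to_contour} and Proposition~\ref{symmetrize_labelproc}. This is precisely the content of Proposition~\ref{prop:convSnake} and Theorem~\ref{thm:convSnake}.

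The final step, however, contains a genuine gap. You assert that any subsequential limit $D$ satisfying $D\le D^\circ$ and $D(x,y)=0$ whenever $x\sim_{\be}y$ is ``identified with $D^\star$ by Le Gall's uniqueness theorem.'' This is false as stated: $D^*$ is characterized as the \emph{maximal} pseudo-metric with these two properties, so they only yield $\tilde D\le D^*$ for a subsequential limit $\tilde D$ (the zero pseudo-metric also satisfies both conditions). What is missing is the matching lower bound, and this is the part of the Le Gall scheme that must actually be verified for each new model. In the paper it comes from the exchangeability of the marked vertex under the pointed Boltzmann law: for independent uniform vertices $X_n,Y_n$ one has $\dist_{\bM_n}(X_n,Y_n)\eqdist\dist_{\bM_n}(X_n,v^\sbt)$; by Proposition~\ref{prop:BDG}(ii) the right-hand side equals $\ell_{\rT_n}(X_n)-\min\ell_{\rT_n}+1$, which after rescaling converges to $Z_{\be}(U)-\inf_{0\le s\le 1}Z_{\be}(s)$, and Corollary~7.3 of \cite{LeGallUniqueness} states that this has the law of $D^*(U,V)$. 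Combining $\tilde D(U,V)\le D^*(U,V)$ almost surely with $\tilde D(U,V)\eqdist D^*(U,V)$ forces almost-sure equality at $(U,V)$, and continuity plus density then give $\tilde D\aseq D^*$. This step is also where the convergence of the type-counting processes $\Lambda^{(i)}$ (the second half of Proposition~\ref{prop:convSnake}) is genuinely needed, in order to match uniform type-$1$ vertices of the mobile with uniform times of the contour exploration; your proposal records that convergence but never uses it. Without the distributional identity argument the proof does not close.
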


\noindent {\bf Remarks.} \\{}
$\star$ If there exists an odd integer $p\geq 3$ such that $q_p > 0 $ and $q_j = 0$ for all $j\neq p$, then clearly $\mathbf{q}$ satisfies Assumption~\ref{assum:q}. In this case $\mathbb{P}_{\mathbf{q},n}$ is the uniform distribution on the set of $p$-angulations with $n$ vertices. Thus, Theorem~\ref{th:oddAngulations} is a direct corollary of Theorem~\ref{th:convergence}. 

\noindent $\star$ The pushforward of $\mathbb{P}^\bullet_{\mathbf{q},n}$ to $\mathcal{M}_n$ obtained by forgetting the marked vertex is $\mathbb{P}_{\mathbf{q},n}$, so we may and will prove Theorem~\ref{th:convergence} for $\bM_n$ distributed according to $\mathbb{P}^\bullet_{\mathbf{q},n}$ rather than $\mathbb{P}_{\mathbf{q},n}$. 
\medskip

To prove this theorem, we rely on the method introduced by Le Gall in \cite[Section 8]{LeGallUniqueness}. This approach exploits distributional symmetries of the Brownian map (which can be deduced from the fact that the ensemble of quadrangulations has the Brownian map as their scaling limit \cite{LeGallUniqueness,MiermontBrownian}) to simplify the task of proving convergence for other ensembles. 

At a high level, to apply the method, three points need to be checked. First, $\bM_n$ must be encoded by a labeled tree such that vertices of the map are in correspondence with a subset of vertices of the tree and such that the labels on the vertices of the tree encode certain metric properties of the map. Second,  the contour and label processes of the labeled trees encoding the sequence of maps should converge (once properly rescaled) to the Brownian snake, (defined in Section~\ref{sub:brownian}). Third,
the vertex with minimum label in the tree must correspond to a vertex in the map whose distribution is asymptotically uniform on $V(\bM_n)$ as $n$ goes to infinity. 

In our setting, the first point is achieved by the Bouttier-Di Francesco-Guitter bijection, which we recall in the next section. The third one, addressed in Section~\ref{sub:invariance}, is a direct consequence of a result by Miermont. Proving that the second point holds (its precise statement will be given in Proposition~\ref{prop:convSnake}) is the main new contribution of this section. 

With these three ingredients, we conclude the proof of Theorem~\ref{th:convergence}~in Section~\ref{sub:convMaps}.

\subsection{The Bouttier-Di Francesco-Guitter bijection}\label{sub:BDG}
In this section, we describe the Bouttier-Di Francesco-Guitter bijection \cite{BDG2004}, roughly following the presentation given in \cite{CurienLeGallMiermontCactus}. 

Let $(M,e,v^\sbt)$ be an element of $\mathcal{M}^\bullet$. Let $e^-$ and $e^+$ be respectively the tail and the head of $e$. Three cases can occur: either $d_M(s,e^-)=d_M(s,e^+)$, $d_M(s,e^-)=d_M(s,e^+)+1$ or $d_M(s,e^-)=d_M(s,e^+)-1$. Depending on which case occurs, we say that $M$ is respectively \emph{null}, \emph{negative} or \emph{positive}. The set of null (resp. positive and negative) pointed and rooted maps is denoted $\mathcal{M}^0$ (resp. $\mathcal{M}^+$ and $\mathcal{M}^-$). By convention, we let $\dagger \in \mathcal{M}^+$.
Reversing the orientation of the root edge gives a bijection between the sets $\mathcal{M}^+\backslash\{\dagger\}$ and $\mathcal{M}^-$. Thus, in the following, we focus only on the sets $\mathcal{M}^+$ and $\mathcal{M}^0$. 
\smallskip

We now introduce the class of decorated trees or \emph{mobiles} which appear in the bijection.
\begin{definition}\label{def:mobiles}
A \emph{mobile} $\rt=(t,d)$ is a 4-type rooted plane labeled tree which satisfies the following constraints. 
\begin{enumerate}[(i)]
	\item Vertices at even generations are of type 1 or 2 and vertices at odd generations are of type 3 or 4.
	\item Each child of a vertex of type 1 is of type 3.
	\item Each non-root vertex of type 2 has exactly one child of type 4 and no other child. If the root vertex is of type 2, it has exactly two children, both of type 4.
\end{enumerate}
The labelling $d$ is an \emph{admissible labeling} of $t$, meaning that the following hold. 
\begin{enumerate}[(1)]
	\item If the root is of type 1, vertices of type 1 and 3 are labeled by integers and vertices of type 2 and 4 by half-integers.
	\item If the root is of type 2, vertices of type 2 and 4 are labeled by integers and vertices of type 1 and 3 by half-integers.
	\item For all vertices $u$ of type 3 or 4, for every $i=0,\ldots,k_t(u)$, 
	\[
	\begin{cases}
	\ell(u(i+1))\geq \ell(ui)-1& \text{ if }u(i+1) \text{ is of type 1},\\
	\ell(u(i+1))\geq \ell(ui)-1/2& \text{ if }u(i+1) \text{ is of type 2}.
	\end{cases}
	\]
	where we use Ulam-Harris encoding together with the convention that $u(k_t(u)+1)$ and $u0$ both denote the parent of $u$.
	\item For all vertices $u$ of type 3 or 4, we have $\ell(u)=\ell(u0)$.
\end{enumerate}
\end{definition}

The set of mobiles such that the root is of type 1 (resp. of type 2) is denoted $\mathbb{T}^+$ (resp. $\mathbb{T}^0$). For $n\geq 1$, the respective subsets of $\mathbb{T}^+$ and $\mathbb{T}^0$ with $n-1$ vertices of type 1 are denoted $\mathbb{T}_n^+$ and $\mathbb{T}^0_n$. We also set $\mathbb{T}^-=\mathbb{T}^+$ and $\mathbb{T}_n^-=\mathbb{T}_n^+$. The notation is justified by Proposition~\ref{prop:BDG}, below. 

\begin{figure}[t!]
\centering
\includegraphics[page=2,width=0.9\textwidth]{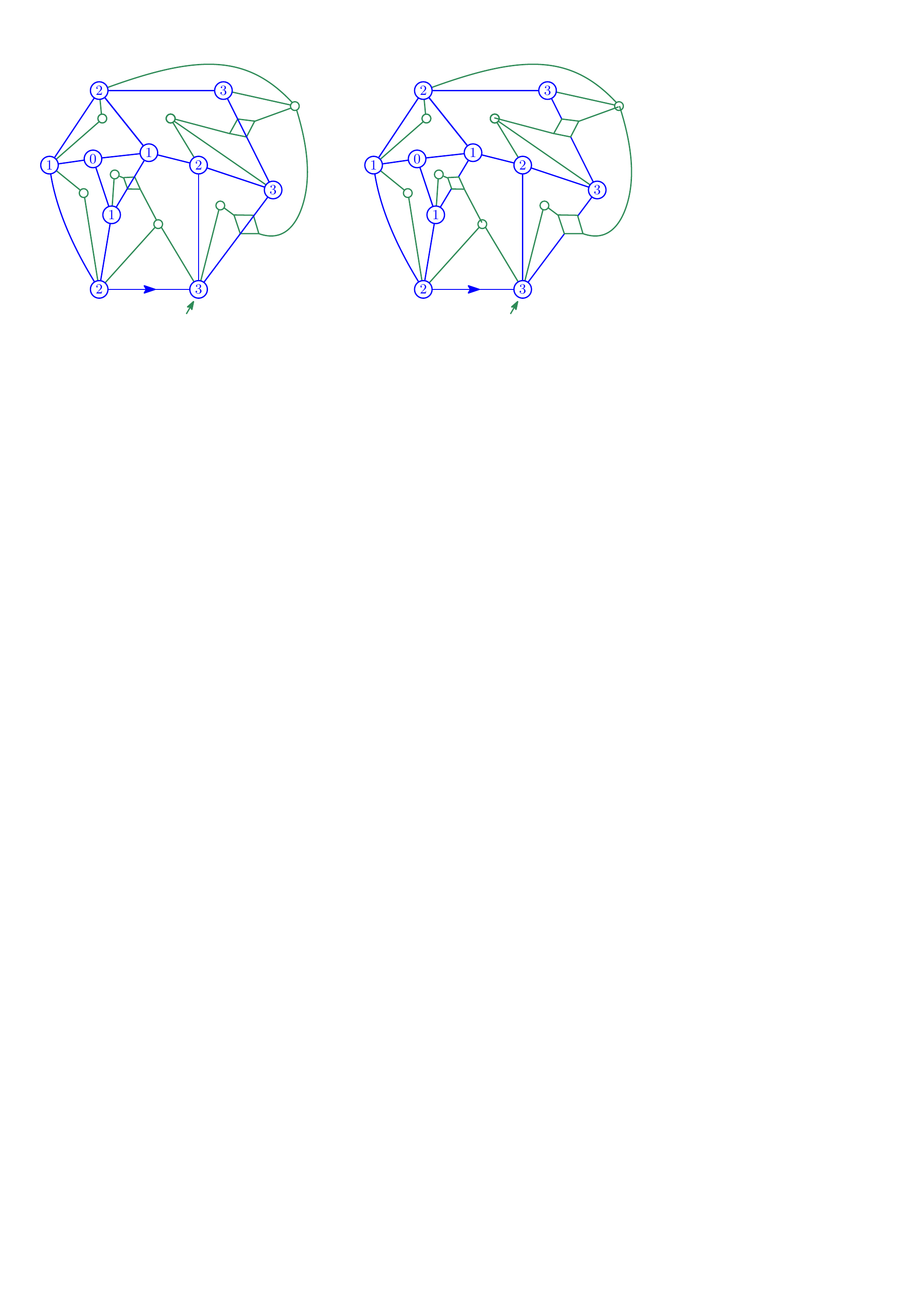}
\caption{\label{fig:BDG} An example of the Bouttier-Di Francesco-Guitter bijection. The original map (left), the construction of $\Phi(m)$ (middle) and the resulting mobile (right). Circle and square labeled vertices correspond respectively to vertices of type 1 and 2. Empty and filled unlabeled vertices correspond respectively to vertices of type 3 and 4.}
\end{figure}
We now give the construction which maps an element of $\mathcal{M}^+$ to an element of $\mathbb{T}^+$ and which is illustrated in Figure~\ref{fig:BDG}.
(the construction for $\mathcal{M}^0$ being very similar, we refer the reader to the original paper~\cite{BDG2004} or to~\cite{CurienLeGallMiermontCactus} for the details).
First, label each vertex of $M$ by $\ell_M(v):=d_M(v,v^\sbt)$. Then, for each edge of the map whose both extremities have the same label, say $\ell$, add a ``flag-vertex'' in the middle of the edge and label it $\ell_M(f):=\ell+1/2$. Call the resulting augmented map $M'$. Next, add a ``face-vertex'' $f$ in each face of the map. Now, for each face of $M'$, considering its vertices in clockwise direction, each time a vertex $v$ is immediately followed by a vertex $w$ with smaller label (the introduction of flag-vertices ensure that any two adjacent vertices have different labels), draw an edge between $v$ and the corresponding face-vertex. Erase all edges of $M'$. 

The result of \cite{BDG2004} ensures that the resulting map, denoted $\Phi(M)$, is in fact a spanning tree of the union of the set of face-vertices, of the set of flag-vertices and of the set of vertices $V(M)\backslash\{v^\sbt\}$. The tree $\Phi(M)$ inherits a planar embedding from $M$. To make it a rooted plane tree, we additionally root it at $e^+$, and choose the first child of $e^+$ to be the face-vertex associated to the face on the left of $(e^-,e^+)$; note that because $M$ is positive, there always exists an edge in $\Phi(M)$ between $e^+$ and this face-vertex.

We assign types to the vertices of $\Phi(M)$ as follows. Vertices of $M$ have type 1, and flag-vertices have type 2. Face-vertices have type 3 if their parent is of type 1 and have type 4 otherwise, This turns $\Phi(M)$ is a mobile, rooted at a vertex of type 1. For $v \in V(M) \setminus \{v^\sbt\}$, by a slight abuse of notation, we also denote $v$ the image of $v$ in $\Phi(M)$.
 % we denote the image of $v$ in $\Phi(M)$ by $\phi(v)$. 

Label the nodes of $\Phi(M)$ as follows. For $u$ of type 1 or 2, let $\ell(v)=\ell_M(v)-\ell_M(e^+)$, this makes sense since $v$ is a node of $M'$.
Having rooted $\Phi(M)$, we give vertices of type 3 and 4 the same label as their parent. We now use these vertex labels to turn $\Phi(M)$ into a rooted labeled tree by giving each edge $(u,ui)$ of $\Phi(M)$ the label $\ell(ui)-\ell(u)$.
\smallskip

The properties of this construction which are essential to our work appear in the following proposition. (Properties (i) and (ii) are contained in the above description and Property (iii) is Lemma~3.1 of~\cite{LeGallInventiones}).
\begin{prop}[Properties of the Bouttier-Di Francesco-Guitter bijection]\label{prop:BDG}
For each $n\geq 1$ and $\star\in\{-,0,+\}$, $\Phi$ gives a bijection between $\mathcal{M}_n^\star$ and $\mathbb{T}_{n}^\star$. For $(m,e,v^\sbt) \in \mathcal{M}^+$, write $\rt=(t,d)$ for the image of $(m,e,v^\sbt)$ by $\Phi$. Then 
\begin{enumerate}[(i)]
\item Elements of $V(m)\backslash \{v^\sbt\}$ are in bijection with vertices of type 1 in $t$.
\item For all $v\in V(m)\backslash \{v^\sbt\}$, $\dist_m(v,v^\sbt) = \ell_\rt(v)-\min_{x\in V(t)}\ell_\rt(x)+1$.
\item For all $u,v \in V(m)\backslash \{v^\sbt\}$, 
\[
\dist_m(u,v) \leq \ell_\rt(v) + \ell_\rt(u) - 2 \max\Big(
\check \ell_\rt\big(u,v\big),\check \ell_\rt\big(v,u\big)\Big)+2,
\]
where $\check \ell_\rt$ is defined as follows. For $x,y \in \rt$, set $i_x=\inf\{i\,:\,\theta_\rt(i)=x\}$ and $i_y=\inf\{i\,:\,\theta_\rt(i)=y\}$. Then 
\[
	\check\ell_\rt\big(x,y\big)=\begin{cases}
	\inf_{i \in [i_x,i_y]}\ell_t(\theta_\rt(i))&\text{if }i_x\leq i_{y}\\
	\inf_{i \in [i_x,2|\rt|-2]\cup[0,i_y]}\ell_t(\theta_\rt(i))&\text{if }i_y< i_{x}\, .
	\end{cases}
\]
\end{enumerate}
\end{prop}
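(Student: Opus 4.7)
My plan is to split the proposition into three pieces: the bijection statement, which is the content of~\cite{BDG2004} and which I would invoke as a black box; properties (i) and (ii), which fall out of unpacking the construction of $\Phi$; and property (iii), which is the subtle quantitative part and for which I would cite~\cite[Lemma 3.1]{LeGallInventiones}. Since the excerpt already explicitly defers (iii) to Le Gall, my main task in a self-contained exposition would be to lay out what exactly needs to be checked and to isolate the one analytic step that cannot be read off from the construction.

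For the bijection $\Phi\colon \mathcal{M}_n^\star \to \mathbb{T}_n^\star$, I would first verify that the image of a pointed rooted map indeed satisfies the structural constraints (i)--(iii) and the label admissibility (1)--(4) in Definition~\ref{def:mobiles}. Items (i)--(iii) on the tree structure follow by inspecting which vertex types are adjacent in $\Phi(m)$ (type-1 vertices are only connected to face-vertices, type-2 flag vertices have only their unique neighbouring face-vertices as children once rooted, etc.); items (1)--(4) on labels are immediate from the definitions $\ell(v) = d_M(v,v^\sbt) - d_M(e^+,v^\sbt)$ and $\ell(f) = \ell + 1/2$ at flags. The existence of an inverse map $\Phi^{-1}$ recovering $(m,e,v^\sbt)$ from a mobile by reconnecting labelled corners is classical and I would cite~\cite{BDG2004}. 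Property (i) is then a tautology: the type-1 vertices of $\Phi(m)$ are by construction the elements of $V(m) \setminus \{v^\sbt\}$. For (ii), over type-1 vertices we have $\ell_\rt(v) = d_M(v,v^\sbt) - d_M(e^+,v^\sbt) \geq 1 - d_M(e^+,v^\sbt)$, with equality at any neighbour of $v^\sbt$; flag vertices have labels of the form $\ell_\rt + 1/2$ with the underlying $\ell_\rt \geq 1 - d_M(e^+,v^\sbt)$, and face-vertices inherit their parents' labels. Hence $\min_{x \in V(t)} \ell_\rt(x) = 1 - d_M(e^+,v^\sbt)$, and (ii) is the algebraic identity $d_M(v,v^\sbt) = \ell_\rt(v) - \min_x \ell_\rt(x) + 1$.

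For (iii), the strategy is to exhibit an explicit short path in $m$ between $u$ and $v$, using the labels as a height function. The key geometric fact supplied by the BDG construction is that for any type-1 vertex $w$ with $\ell_\rt(w) > \min_x \ell_\rt(x)$ there is a distinguished neighbour $\sigma(w)$ in $m$ with $\ell_\rt(\sigma(w)) = \ell_\rt(w) - 1$, and moreover the contour position of $\sigma(w)$ in $\rt$ is determined by the position of $w$ (roughly, it is the next occurrence in contour order of the label $\ell_\rt(w)-1$). Iterating $\sigma$ from $u$ and from $v$ produces two monotone-decreasing chains in $m$; by tracking which arc of contour time each chain is forced into, one shows that the two chains meet at a common map vertex whose label is $\max(\check\ell_\rt(u,v), \check\ell_\rt(v,u)) - 1$. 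Concatenating the two chains and accounting for the initial steps from $u$ and $v$ to their first successors yields the stated bound, with the $+2$ absorbing these boundary contributions.

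The main obstacle is exactly the verification that $\sigma$ really is a well-defined neighbour-in-the-map with the right contour-time behaviour and that iterating $\sigma$ from $u$ and $v$ produces chains that provably meet at a vertex of the claimed label. This requires a careful case analysis around each face of $m$ in the BDG construction; rather than reproduce it, I would invoke~\cite[Lemma 3.1]{LeGallInventiones} which carries out precisely this argument.
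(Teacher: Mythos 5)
Your proposal matches the paper's treatment: the paper offers no proof beyond the parenthetical remark that the bijection is due to Bouttier--Di Francesco--Guitter, that (i) and (ii) are contained in the description of the construction, and that (iii) is Lemma~3.1 of Le Gall's paper \cite{LeGallInventiones} — exactly the decomposition and citations you use. Your additional details (the computation of $\min_x \ell_\rt(x)$ for (ii) and the successor-chain sketch for (iii)) are correct and only make the argument more explicit than the paper's.
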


\subsection{Convergence of labeled trees}\label{sub:invariance}
This section is devoted to the study of random labeled trees obtained by applying the Bouttier-Di Francesco-Guitter bijection $\Phi$ to random maps distributed according to $\mathbb{P}_{\mathbf{q},n}$. 

\begin{thm}\label{thm:convSnake}
There exist $a_{\textbf{q}}>0$ and $b_{\textbf{q}}>0$ such that the following holds. 
For $n \ge 1$ with $Z_{\mathbf{q},n} >0$ let $M_n$ have law $\mathbb{P}_{\mathbf{q},n}$, and let $\rT_n=\Phi(M_n)$ be obtained by applying the  Bouttier-Di Francesco-Guitter bijection to $M_n$. Then as $n \to \infty$ along values with $Z_{\mathbf{q},n} >0$, 
\[
	\left(\frac{a_{\textbf{q}}}{n^{1/2}}C_{\rT_n}(s),\frac{b_{\textbf{q}}}{n^{1/4}}Z_{\rT_n}(s)\right)_{0\leq s \leq 1}  \convdist (\mathbf{e}(s),Z_{\mathbf{e}}(s))_{0\leq s\leq 1},
\]
for the topology of uniform convergence on $C([0,1],\R^2)$.
Moreover, for each $i\in\{1,2,3,4\}$, there exists $\gamma_i\geq 0$ such that 
\[
	\Big(\frac{1}{n}\Lambda^{(i)}_{\rT_n}(t),0\leq s\leq 1\Big)\convdist \big(\gamma_is,0\leq s\leq 1\big)\, ,
\]
for the topology of uniform convergence on $C([0,1],\R)$.
\end{thm}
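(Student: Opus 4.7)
\textbf{Proof proposal for Theorem~\ref{thm:convSnake}.} The plan is to identify $\rT_n=\Phi(M_n)$ as (a slight variant of) a conditioned multitype Galton--Watson tree with a valid labelling, apply symmetrization to reduce to the locally centered setting, and then invoke Miermont's invariance principle for multitype labelled Galton--Watson trees.

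First, I would verify that $\rT_n$ has a valid law in the sense of Section~\ref{sub:validAndSymmetric}. Using the Bouttier--Di Francesco--Guitter bijection, one reads off from the Boltzmann weights $\mathbf{q}$ a 4-type offspring distribution $\mathrm{p}=(p^s, s\in\{1,2,3,4\})$ which is automatically permutation-invariant (since the weight of a face depends only on its degree), together with displacement laws $\pi^{s}_{\cht}$ supported on the admissible labellings described in Definition~\ref{def:mobiles}. A standard computation (available e.g.\ in the expositions of BDG for Boltzmann maps, as used in~\cite{MarckertMiermontInvariance,MiermontInvariance}) shows that the image of $\mathbb{P}^\bullet_{\mathbf{q},n}$ under $\Phi$ coincides with the law of a multitype Galton--Watson tree with these $\mathrm{p}$ and $\bm{\pi}$, conditioned on having exactly $n-1$ type-$1$ vertices (and a suitable root type convention, handled by the remarks following Theorem~\ref{th:convergence} and by splitting into the cases $\mathcal{M}^0$ and $\mathcal{M}^+$). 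By the discussion at the end of Section~\ref{sub:validAndGW}, this conditioning preserves validity, and by Proposition~\ref{prop:GWSym} the underlying plane-tree law is symmetric. A direct check using the admissibility rules of Definition~\ref{def:mobiles} together with the regular criticality of $\mathbf{q}$ shows that $\bm{\pi}$ is centered in the sense of Section~\ref{sub:centering}; it is \emph{not} locally centered precisely because $\mathbf{q}$ has an odd index in its support, which is exactly the obstacle that motivates this paper.

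Second, by the claim in Section~\ref{sub:centering}, the symmetrized family $\bm{\pi}^{\sym}$ \emph{is} locally centered. For the symmetrized tree $\rT_n^{\sym}$, the hypotheses of Miermont's invariance principle for labelled multitype Galton--Watson trees~\cite{MiermontInvarianceTrees} are then satisfied: the offspring distributions are critical, have finite variance (since $\mathbf{q}$ has finite support), the type space is finite, the Perron--Frobenius eigenstructure yields positive asymptotic proportions of each type, and the displacement distributions are locally centered with finite second moments (these moments being bounded because the admissible displacements at each type lie in a bounded set determined by $\mathbf{q}$). Miermont's theorem then provides positive constants $a_{\mathbf{q}},b_{\mathbf{q}}$ such that the rescaled height and label processes of $\rT_n^{\sym}$ jointly converge in distribution to $(\mathbf{e},Z_{\mathbf{e}})$, and by Proposition~\ref{prop:height_to_contour} the same limit holds for the contour and label processes, i.e.
\[
\left(\tfrac{a_{\mathbf{q}}}{n^{1/2}} C_{\rT_n^{\sym}},\, \tfrac{b_{\mathbf{q}}}{n^{1/4}} Z_{\rT_n^{\sym}}\right) \convdist (\mathbf{e},Z_{\mathbf{e}}).
\]
With this in hand, applying Theorem~\ref{thm:main} (whose hypotheses are exactly validity of $\nu_n$ and convergence for the symmetrized laws, with $a_n=a_{\mathbf{q}}/n^{1/2}$, $b_n=b_{\mathbf{q}}/n^{1/4}$) immediately transfers the convergence to $\rT_n$ itself, giving the first assertion of the theorem.

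For the second assertion, Miermont's invariance principle also provides, for $\rT_n^{\sym}$, a law of large numbers for the number of type-$i$ vertices visited by the contour up to time $s$: by the Perron--Frobenius theorem applied to the mean matrix of $\mathrm{p}$, there are constants $\gamma_i \ge 0$ (proportional to the stationary proportion of type-$i$ vertices in a large tree) such that $n^{-1}\Lambda^{(i)}_{\rT_n^{\sym}}(s) \convdist \gamma_i s$ uniformly on $[0,1]$; values of $i$ which occur with asymptotic proportion zero give $\gamma_i=0$. Proposition~\ref{symmetrize_labelproc} then transfers this convergence to $\rT_n$, completing the proof. The main obstacle throughout is conceptual rather than computational: one must package the BDG image of a pointed rooted Boltzmann map as a valid (conditioned) multitype Galton--Watson labelling so that Theorem~\ref{thm:main} and Proposition~\ref{symmetrize_labelproc} apply; once this reduction is made, Miermont's principle combined with the symmetrization machinery of Section~\ref{sec:fdd} does the rest.
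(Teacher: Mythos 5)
Your proposal is correct and follows essentially the same route as the paper: the paper reduces Theorem~\ref{thm:convSnake} to the analogous statement for each of the conditioned classes $\mathcal{M}^+,\mathcal{M}^0,\mathcal{M}^-$ (Proposition~\ref{prop:convSnake}), identifies $\Phi(M_n^+)$ as a conditioned $4$-type Galton--Watson tree with valid law and centered (but not locally centered) displacements via \cite{CurienLeGallMiermontCactus} and \cite{MiermontInvariance}, applies Miermont's invariance principle \cite{MiermontInvarianceTrees} to the symmetrized trees, and transfers back exactly via Proposition~\ref{prop:height_to_contour}, Theorem~\ref{thm:main} and Proposition~\ref{symmetrize_labelproc}. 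The only substantive detail you gloss over is the $\mathcal{M}^0$ case, where the image is an ordered pair of mobiles rooted at type-$4$ vertices (one of which is of bounded size), but this is handled by the same machinery.
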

Theorem~\ref{thm:convSnake} is an immediate consequence of the following proposition. For $n \ge 1$ let $\mathcal{Z}_{\mathbf{q},n}^+ = \sum_{m \in \mathcal{M}_n^+} W_{\mathbf{q}}(m)$ and, if $\mathcal{Z}_{\mathbf{q},n}^+>0$ then define $\mathbb{P}_{\mathbf{q},n}^+$ by 
\[
\mathbb{P}_{\mathbf{q},n}^+(m) = \frac{W_{\mathbf{q}}(m)}{\mathcal{Z}_{\mathbf{q},n}^+}\, .
\]
Likewise define $\mathcal{Z}_{\mathbf{q},n}^0,\mathcal{Z}_{\mathbf{q},n}^-$ and $\mathbb{P}_{\mathbf{q},n}^0$,$\mathbb{P}_{\mathbf{q},n}^-$. 
\begin{prop}\label{prop:convSnake}
There exist $a_{\textbf{q}}>0$ and $b_{\textbf{q}}>0$ such that for any symbol $\star \in \{-,0,+\}$ the following holds. 
For $n \ge 1$ with $Z_{\mathbf{q},n}^\star >0$ let $M_n^\star$ have law $\mathbb{P}_{\mathbf{q},n}^\star$, and let $\rT_n^\star=\Phi(M_n^\star)$ be obtained by applying the  Bouttier-Di Francesco-Guitter bijection to $M_n^\star$. Then as $n \to \infty$ along values with $Z_{\mathbf{q},n}^\star >0$, 
\[
	\left(\frac{a_{\textbf{q}}}{n^{1/2}}C_{\rT_n^\star}(s),\frac{b_{\textbf{q}}}{n^{1/4}}Z_{\rT_n^\star}(s)\right)_{0\leq s \leq 1}  \convdist (\mathbf{e}(t),Z_{\mathbf{e}}(s))_{0\leq s\leq 1},
\]
for the topology of uniform convergence on $C([0,1],\R^2)$.
Moreover, for each $i\in\{1,2,3,4\}$, there exists $\gamma_i\geq 0$ such that 
\[
	\Big(\frac{1}{|\rT^\star_n|}\Lambda^{(i)}_{\rT^\star_n}(s),0\leq s\leq 1\Big)\convdist\big(\gamma_is,,0\leq s\leq 1\big)\, ,
\]
for the topology of uniform convergence on $C([0,1],\R)$.
\end{prop}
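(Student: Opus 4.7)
The proof proceeds by recognising $\rT_n^\star$ as a conditioned multitype labeled Galton--Watson tree whose law is valid in the sense of Section~\ref{sub:validAndSymmetric}, applying Theorem~\ref{thm:main} to reduce to a statement about the symmetrized tree, and finally invoking the invariance principle of Miermont~\cite{MiermontInvarianceTrees} for multitype labeled Galton--Watson trees with locally centered displacements.

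First I would identify the law $\nu_n^\star := \mathbb{P}_{\mathbf{q},n}^\star \circ \Phi^{-1}$. By a standard consequence of the BDG bijection together with the regular criticality of $\mathbf{q}$ (see~\cite{MarckertMiermontInvariance,MiermontInvariance}), the underlying unlabeled tree $T_n^\star$ is a $4$-type Galton--Watson tree with a certain permutation-invariant offspring distribution $\mathrm{p}_{\mathbf{q}}$, conditioned on the root type (which depends on $\star$) and on having exactly $n-1$ vertices of type $1$. By Proposition~\ref{prop:GWSym} and the discussion of Section~\ref{sub:validAndGW}, property~(i) of validity is automatic, and properties~(ii) and~(iii) hold because the admissibility constraints (1)--(4) of Definition~\ref{def:mobiles} make the conditional law of the displacement vector at each vertex depend only on the vertex's type and on the type sequence of its children. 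Hence $\nu_n^\star$ is valid.

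Next I would verify that the family $\bm{\pi}_{\mathbf{q}}$ of displacement laws is centered in the sense of Section~\ref{sub:centering}: for each fixed multiset of child types around a type-$3$ or type-$4$ vertex, summing the expected displacements over all cyclically-admissible orderings gives zero. This is the (non-local) centering identity already exploited in \cite{MiermontInvariance,MiermontWeill}, and by the claim following the definition of centered it implies that $\bm{\pi}_{\mathbf{q}}^{\sym}$ is locally centered. The symmetrized tree $(\rT_n^\star)^{\sym}$ is therefore a labeled conditioned multitype Galton--Watson tree with locally centered, bounded (since $\mathbf{q}$ has finite support) displacements, to which Miermont's invariance principle~\cite{MiermontInvarianceTrees} applies. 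This yields positive constants $a_{\mathbf{q}},b_{\mathbf{q}}$, independent of $\star$, and nonnegative $\gamma_1,\gamma_2,\gamma_3,\gamma_4$ such that the rescaled height and label processes of $(\rT_n^\star)^{\sym}$ converge jointly to $(\mathbf{e},Z_{\mathbf{e}})$ while each normalized type-profile $|V((T_n^\star)^{\sym})|^{-1}\Lambda^{(i)}_{(\rT_n^\star)^{\sym}}$ converges to $\gamma_i s$, all for the uniform topology; passing from the height encoding to the contour encoding is done via Proposition~\ref{prop:height_to_contour}.

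Finally I would transfer these conclusions to $\rT_n^\star$ itself. The contour/label convergence is upgraded to $\rT_n^\star$ by Theorem~\ref{thm:main}, using the validity of $\nu_n^\star$ established in the first step, and the profile-process convergence is upgraded by Proposition~\ref{symmetrize_labelproc}. The main obstacle I anticipate is a careful verification that Miermont's result applies off-the-shelf in our precise conditioning regime: one must check that the mean offspring matrix of $\mathrm{p}_{\mathbf{q}}$ is critical, irreducible and has finite second moments (which follows from regular criticality of $\mathbf{q}$ and its finite support), and that conditioning on the number of type-$1$ vertices, rather than on total size, is covered by the statements of~\cite{MiermontInvarianceTrees} (otherwise a short additional argument, using that the proportion of type-$1$ vertices concentrates around $\gamma_1$, is needed to relate the two conditionings and to extract the correct scaling constants $a_{\mathbf{q}},b_{\mathbf{q}}$). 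The uniformity over $\star\in\{-,0,+\}$ is then automatic, since changing $\star$ only alters the root type while leaving the offspring and displacement laws intact.
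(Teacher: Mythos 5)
Your proposal follows essentially the same route as the paper: identify $\nu_n^\star$ as a valid law on conditioned $4$-type labeled Galton--Watson trees (the paper uses the explicit description from Proposition~4.6 of \cite{CurienLeGallMiermontCactus}), use the centeredness of the displacements (Lemma~2 of \cite{MiermontInvariance}) so that symmetrization yields locally centered displacements, apply Miermont's invariance principle \cite{MiermontInvarianceTrees} to the symmetrized tree, and transfer back via Theorem~\ref{thm:main}, Proposition~\ref{prop:height_to_contour} and Proposition~\ref{symmetrize_labelproc}. The only point you treat too lightly is $\star=0$: there the root has type $2$ with two type-$4$ children, so one is really dealing with an ordered pair of Galton--Watson trees and must invoke the extension of the invariance principle to forests with a bounded number of trees (one of which has $\mathcal{O}(1)$ size), as the paper notes via the remark on page~1149 of \cite{MiermontInvarianceTrees}.
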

\begin{proof}
We provide details only for the case that $\star=+$, and briefly discuss the other cases at the end of the proof. 
Using the notation of Section~\ref{sub:validAndGW}, Proposition~4.6 of~\cite{CurienLeGallMiermontCactus} 
gives the following  description of the distribution of $\Phi(M_n^+)=\rT^+_n=(T^+_n,D^+_n)$. 
\begin{enumerate}
	\item The tree $T^+_n$ is a 4-type Galton-Watson tree whose root has type 1 and which is conditioned to have $n-1$ vertices of type 1. Its offspring distribution $\bm{\zeta}^+_{\mathbf{q}}$ is as follows. 
	\begin{itemize}
	\item The support of $\zeta^{(1)}_{\mathbf{q}}$ is $\{0\}\times\{0\}\times\mathbb{Z}_+\times\{0\}$, and for $k\geq 0$, 
	\[
\zeta^{(1)}_{\mathbf{q}}(0,0,k,0)=\frac{1}{\mathcal{Z}_{\mathbf{q}}^+}\Big(1-\frac{1}{\mathcal{Z}_{\mathbf{q}}^+}\Big)^k.\]
	\item $\zeta^{(2)}_{\mathbf{q}}(0,0,0,1)=1$.
	\item $\zeta^{(3)}_{\mathbf{q}}$ and $\zeta^{(4)}_{\mathbf{q}}$ are supported on $\mathbb{Z}_+\times\mathbb{Z}_+\times\{0\}\times\{0\}$, and for any $k,k'\geq 0$,
	\begin{align*}
	\zeta^{(3)}_{\mathbf{q}}(k,k',0,0)&=\alpha_{\mathbf{q}}({\mathcal{Z}_{\mathbf{q}}^+})^k({\mathcal{Z}_{\mathbf{q}}^0})^{k'/2}\binom{2k+k'+1}{k+1,k,k'}q_{2k+k'+2}\\
	\zeta^{(4)}_{\mathbf{q}}(k,k',0,0)&=\beta_{\mathbf{q}}({\mathcal{Z}_{\mathbf{q}}^+})^k({\mathcal{Z}_{\mathbf{q}}^0})^{k'/2}\binom{2k+k'}{k,k,k'}q_{2k+k'+1},
	\end{align*}
	where $\alpha_{\mathbf{q}}$ and $\beta_{\mathbf{q}}$ are the appropriate normalizing constants.
	\end{itemize}
	\item Conditionally given $T^+_n$, the labeling $D^+_n$ is uniformly distributed over all admissible labelings (see Definition~\ref{def:mobiles}). 
\end{enumerate}
The law $\nu^+_{\mathbf{q},n}$ of $\rT_n^+$ is valid (see Section~\ref{sub:validAndGW}) but its displacements are not locally centered. However, by Lemma~2 of \cite{MiermontInvariance}, we know that its displacements are centered. Moreover, it follows directly from Proposition~3 of \cite{MiermontInvariance} that the symmetrization $\rT_n^{+,\sym}$ satisfies all the assumptions of Theorems~2 and~4 of~\cite{MiermontInvarianceTrees}\footnote{N.B.\ The term ``centered'' as used in \cite[Theorem~4]{MiermontInvarianceTrees} in fact corresponds to the notion of ``locally centered'' used in the current work.}.
 % However, by Lemma~2 and Proposition~3 of \cite{MiermontInvariance}, its symmetrization satisfies all the assumptions of Theorems~2 and~4 of~\cite{MiermontInvarianceTrees}.
The conclusion of those theorems is that there exist $a_{\textbf{q}}>0$ and $b_{\textbf{q}}>0$ such that, as $n \to \infty$ along values with $\mathcal{Z}_{\mathbf{q},n}^+ > 0$, with $\rT_{n}^{+,\sym}$ distributed as $\nu^{+,\sym}_{\mathbf{q},n}$, then 
\[
\left(\frac{a_{\textbf{q}}}{n^{1/2}}H_{T^{+,\sym}_n}(s),\frac{b_{\textbf{q}}} {n^{1/4}}S_{\rT^{+,\sym}_n}(s)\right)_{0\leq s \leq 1}  \convdist (\mathbf{e} (s),Z(s))_{0\leq s\leq 1}, 
\] 
for the topology of uniform convergence on $\mathcal{C}([0,1]^2)$, and for all $i \in \{1,2,3,4\}$, 
\[
	\Big(\frac{1}{n}\Lambda^{(i)}_{\rT^{+,\sym}_n}(s),0\leq s\leq 1\Big)\convpr \big(\gamma_is,,0\leq s\leq 1\big)\, 
\]
for the topology of uniform convergence on $\mathcal{C}([0,1])$, for suitable constants $(\gamma_i,1 \le i \le 4)$. 
By Proposition~\ref{prop:height_to_contour} and  Theorem~\ref{thm:main}, the first convergence also holds with $H_{T^{+,\sym}_n}$ and $S_{T^{+,\sym}_n}$ replaced by $C_{T^{+}_n}$ and $Z_{T^{+}_n}$, respectively; by Proposition~\ref{symmetrize_labelproc}, the second convergence also holds  with $T^{+,\sym}_n$ replaced by $T^{+}_n$. This completes the proof in the case that $\star=+$. 

Since it suffices to flip the orientation of the root edge of a map sampled from $\mathbb{P}^+_{\mathbf{q},n}$ to simulate $\mathbb{P}^-_{\mathbf{q},n}$, the case that $\star=-$ reduces to the case that $\star=+$. 
In the case $\star=0$, the pushforward $\nu^0_{\mathbf{q},n}$ of $\mathbb{P}^0_{\mathbf{q},n}$ by $\Phi$ is very similar to $\nu^+_{\mathbf{n,q}}$, the only exception being that the root has type 2 and has two children of type 4. 
We are hence left with an ordered pair of Galton-Watson trees, both with root vertex of type 4, and conditioned to together contain a total of $n-1$ vertices of type 1. A reprise of the above arguments (again using the results of~\cite{MiermontInvarianceTrees} together with Theorem~\ref{thm:main}) again yields the result in this case. (The fact that there are two trees rather than one may seem like an issue; but it is known that in this case one of the two trees will have $\mathcal{O}(1)$ size and will disappear in the limit, while the other one will exhibit the desired invariance principle. The extension of the invariance principle from trees to forests with a bounded number of trees is explained in the remark on page 1149 of \cite{MiermontInvarianceTrees}.)

% In the case $\star=0$, the pushforward $\nu^0_{\mathbf{q},n}$ of $\mathbb{P}^0_{\mathbf{q},n}$ by $\Phi$ is very similar to $\nu^+_{\mathbf{n,q}}$, the only exception being that the root has type 2 and has two children; a reprise of the above arguments (again using the results of~\cite{MiermontInvarianceTrees} together with Theorem~\ref{thm:main}) again yields the result in this case. 
\end{proof}

\subsection{Convergence of Boltzmann maps}\label{sub:convMaps}
We conclude the proof of Theorem~\ref{th:convergence} in this section. Our argument closely mimics that given in~\cite[Section~8]{LeGallUniqueness} for the convergence of triangulations, so we only give the main steps of the proof. 
Let $\mathbf{M}_n$ have law $\mathbb{P}^+_{\mathbf{q},n}$. We shall prove that
\[
	\Big(V(\mathbf{M}_n),\frac{b_{\mathbf{q}}}{n^{1/4}}\dist_{\mathbf{M}_n},\mu_n\Big) \convdist (M,d^*,\lambda) 
\]
in distribution for the Gromov-Hausdorff-Prokhorov topology, as $n \to \infty$. 
The same holds with $\mathbf{M}_n$ having law $\mathbb{P}^0_{\mathbf{q},n}$ or $\mathbb{P}^-_{\mathbf{q},n}$, with essentially the same proof (with some minor modifications, as at the end of Section~\ref{sub:invariance}; we omit the details for these cases).

Fix $(m,e,v)$ in $\cM^+$ and write $\Phi(m,e,v)=\rt=(t,d)\in \cT^+$. 
For $i\leq j \in\{0,\ldots,2|t|-2\}$ such that $\theta(i)$ and $\theta(j)$ are both of type 1, we define
\[\dfun_\rt(i,j)=\dist_{m}\big(\theta(i),\theta(j)\big)
\]
and, writing $\ell=\ell_\rt$ for the vertex labeling of $\rt$, 
\[
\dfun_\rt^{\circ}(i,j)=\dfun_\rt^{\circ}(j,i)=\ell(\theta(i))+\ell(\theta(j))-2\max\big(\min_{k\in\{i,\ldots,j\}}\ell(\theta(k)),\min_{k\in\{j,\ldots,2|t|-2\}\cup\{0,\ldots i\}}\ell(\theta(k))\big)+2.
\]
It follows directly from $(iii)$ of Proposition~\ref{prop:BDG} that 
\begin{equation}\label{eq:ddrond}
	\dfun_\rt(i,j)\leq \dfun_\rt^{\circ}(i,j). 
\end{equation} 
Finally, we extend both $\dfun_t$ and $\dfun^\circ_t$ to $[0,2|t|-2]\times [0,2|t|-2]$ by linear interpolation. The inequality in~\eqref{eq:ddrond} readily extends to this whole set.
\smallskip

We write $\bT_n=\Phi(\bM_n)$ and $m_n=2|\bT_n|-2$, and for $0\leq r,s \leq 1$ we define 
\[
	D_n(r,s):=\frac{b_{\mathbf{q}}}{n^{1/4}}\dfun_{\bT_n}(m_nr,m_ns)\quad \text{and}\quad D^\circ_n(r,s):=\frac{b_{\mathbf{q}}}{n^{1/4}}\dfun^\circ_{\bT_n}(m_nr,m_ns).
\]

Recall the definition of $D^\circ$ given in~\eqref{eq:defDrond}. Since $D^\circ$ depends continuously on $Z$, Proposition~\ref{prop:convSnake} implies that $D_n^\circ \convdist D^\circ$; moreover this convergence holds jointly with that stated in Proposition~\ref{prop:convSnake}. Together with the bound \eqref{eq:ddrond} this implies (see~\cite[Proposition~3.2]{LeGallInventiones}) that the family of laws of $(D_n,n \ge 1)$ is tight in the space of probability measures on $C([0,1],\R^2)$. Hence, from any increasing sequence of positive integers, we can extract an increasing subsequence $(n_j)_{j\geq 1}$, such that, jointly with the convergence in Proposition~\ref{prop:convSnake}, we have
\begin{equation}\label{eq:subseq}
	\Big(D^\circ_{n_j}(s,t),D_{n_j}(s,t)\Big)_{0\leq s\leq t \leq 1}\convdist (D^\circ(s,t),\tilde D(s,t))_{0\leq s\leq t\leq 1},
\end{equation}
for some random process $\tilde{D}$ taking values in $C([0,1],\R)$. We will show that this convergence holds without extracting a subsequence with $\tilde D=D^*$; from this the theorem follows just as in~\cite{LeGallUniqueness}, since $d^*$ is the push-forward of $D^*$ to $[0,1]\backslash \{D^*=0\}$
By the Skorohod representation theorem, we may and will assume that the convergence along $(n_j)_{j\geq 1}$ in~\eqref{eq:subseq} and in Proposition~\ref{prop:convSnake} jointly hold almost surely.
To prove that the distributional convergence holds without extracting a subsequence, it then suffices to prove that $\tilde D\aseq D^*$, where $D^*$ is defined in Section~\ref{sub:brownian}; recall that $D^*$ is a measurable function of $({\bf e},D^\circ)$.

By \eqref{eq:ddrond}, necessarily $\tilde D\leq D^\circ$ almost surely. Since $\tilde D$ satisfies the triangle inequality, it then follows from the definition of $D^*$ that $\tilde D\leq D^*$ almost surely. By continuity, to show that $\tilde D\aseq D^*$ it is then enough to prove that for any $U,V$ two independent uniform random variables in $[0,1]$, independent of all the other random objects, we have $\tilde D(U,V)\eqdist D^*(U,V)$.

So let $U,V$ be two such uniform random variables. List the vertices of type 1 in $\rT_n$ in lexicographic order as $v_1,\ldots, v_{n-1}$. For $k\in\{1,\ldots,n-1\}$, let $i_{\rT_n}(k)=\inf\{i\geq 0,\, \theta_{\rT_n}(i)=v_k\}$. Next, define $U_n=\lceil U\cdot(n-1)\rceil$ and $V_n=\lceil V\cdot(n-1)\rceil$ which are both uniformly distributed on $\{1,\ldots,n-1\}$. It follows from the last statement of Proposition~\ref{prop:convSnake} that $(n-1)/|\bT_n| \convpr \gamma_1$ and that
\[
	\frac{i_{\rT_n}(U_n)}{m_n}\convpr U \qquad\text{and} \qquad \frac{i_{\rT_n}(V_n)}{m_n}\convpr V; 
\]
the last statement of Proposition~\ref{prop:convSnake} is written as convergence in distribution, but the limit is non-random and so we indeed obtain convergence in probability. 
Hence, 
\begin{equation}\label{eq:convpr}
	\big|D_n(U,V)-\frac{b_{\mathbf{q}}}{n^{1/4}}\dist_{M_n}(v_{U_n},v_{V_n})\big|\convpr 0.
\end{equation}
	Let now $X_n,Y_n$ be two independent uniform vertices of $\mathbf{M}_n$. Then, the convergence results stated in~\eqref{eq:subseq} and in~\eqref{eq:convpr} together imply that$\dfrac{b_{\mathbf{q}}}{n_j^{1/4}}\dist_{\mathbf{M}_{n_j}}(X_{n_j},Y_{n_j})$ also converges in distribution\footnote{Observe that $v_{U_n}$ and $v_{V_n}$ are two uniform vertices of $\mathbf{M}_n\backslash\{v^\sbt\}$ rather than two uniform vertices of $\mathbf{M}_n$. But, since $|V(\mathbf{M}_n)|=n$, the difference is asymptotically negligible.}.  
	Since, $\dist_{\mathbf{M}_n}(X_n,Y_n)\eqdist\dist_{\mathbf{M}_n}\big(X_n,v^\sbt(\mathbf{M}_n)\big)$, for all $n$, it follows that the limiting distribution of $\dfrac{b_{\mathbf{q}}}{n_j^{1/4}}\dist_{\mathbf{M}_{n_j}}(X_{n_j},Y_{n_j})$ is the same as the limiting distribution of 
	\[
	\frac{b_{\mathbf{q}}}{n^{1/4}}\dist_{\mathbf{M}_n}\big(X_n,v^\sbt(\mathbf{M}_n)\big)=\frac{b_{\mathbf{q}}}{n^{1/4}}\big(Z_{\rT_n}(U_n)-\min Z_{\rT_n}+1\big),
\]
where the equality is a consequence of $(ii)$ in Proposition~\ref{prop:BDG} and of the definition of $Z_{\rT_n}$ (see Section~\ref{sub:def}). 
 This last quantity converges to $Z_{\mathbf{e}}(U)-\inf(Z_{\mathbf{e}}(s),0\leq s \leq 1)$. By~\cite[Corollary~7.3]{LeGallUniqueness}, $Z_{\mathbf{e}}
 (U)-\inf(Z_{\mathbf{e}}(s),0\leq s \leq 1)\eqdist D^*(U,V)$, and therefore $\tilde D\aseq D^*$ as required. 
 \hfill \qed

%\section{About the scaling constants}

\addtocontents{toc}{\SkipTocEntry} %To suppress acknowledgements from Table of Contents
\section{Acknowledgements}
Both authors thank two anonymous referees, whose comments significantly improved the presentation of this work.
LAB was supported throughout this research by an NSERC Discovery Grant, and for part of this research by an FRQNT team grant. MA was supported by an ANR grant under the agreement ANR-16-CE40-0009-01 (ANR GATO).
The final stages of this work were conducted at the McGill Bellairs Research Institute; both LAB and MA thank Bellairs for providing a productive and flexible work environment. 

%%%%%%%%%%%%%%%%%%%%%%%%%%%%%%%%%%%%%%%%%%%%%%%%%%%%%
\small
\printnomenclature[3.1cm]
\normalsize
%%%%%%%%%%%%%%%%%%%%%%%%%%%%%%%%%%%%%%%%%%%%%%%%%%%%

%Use these commands if you're using a bib file called template.bib 
% \bibliographystyle{plainnat}
% \bibliography{oddangulations,template}

 %Use this for bibliography within the main .tex file
\small

\normalsize
\end{document}